\newtheorem{theorem}{Theorem}
\newtheorem{conjecture}[theorem]{Conjecture}
\newtheorem{corollary}[theorem]{Corollary}
\newtheorem{definition}[theorem]{Definition}
\newtheorem{lemma}[theorem]{Lemma}
\newtheorem{proposition}[theorem]{Proposition}
\newcommand{\Aut}{\operatorname{Aut}}
\newcommand{\Z}{\mathbb{Z}}
\newcommand{\R}{\mathbb{R}}
\newcommand{\C}{\mathbb{C}}
\newcommand{\Ind}{\operatorname{Ind}}
\newcommand{\Tr}{\operatorname{Tr}}
\newcommand{\Orb}{\mathcal{O}}
\renewcommand{\k}{\mbox{\Fontauri k}}
\title{A conjecture of Sakellaridis-Venkatesh \\on the unitary spectrum of spherical varieties.}
\author{Wee Teck Gan and Raul Gomez}
\address{Department of Mathematics, National University of Singapore, Block S17, 10 Lower Kent Ridge Road, Singapore 587628}
\email{matgwt@nus.edu.sg}
\email{matrgm@nus.edu.sg}
\dedicatory{to Nolan Wallach, \\
 with admiration and appreciation}
\begin{document}
\maketitle

\section{Introduction}
The spectral decomposition of the unitary representation $L^2(H \backslash G)$ when   $X= H\backslash G$ is a symmetric space has been studied extensively, especially in the case when $G$ is a real Lie group. In particular, through the work of many authors (such as \cite{FJ}, \cite{OM}, \cite{V}, \cite{De1} and \cite{BS}), one now has the full Plancherel theorem in this setting.
\vskip 5pt

In a recent preprint \cite{Sakellaridis-Venkatesh}, Sakellaridis and Venkatesh considered the more general setting where $X = H \backslash G$ is a spherical variety and $G$ is a real or p-adic group. Motivated by the study of periods in the theory of automorphic forms and the comparison of relative trace formulas, they formulated an approach to this problem  in the framework of Langlands functoriality. More precisely, led by and refining the work of Gaitsgory-Nadler \cite{GN} in the geometric Langlands program, they associated to a spherical variety $X= H \backslash G$ (satisfying some additional technical hypotheses) 
\begin{itemize}
\item a dual group $\check{G}_X$;
\item a natural map $\iota: \check{G}_X \times  SL_2(\C) \longrightarrow \check{G}$
\end{itemize}
The map $\iota$ induces a map from the set of tempered L-parameters of $G_X$ to the set of Arthur parameters of $G$, and if one is very optimistic, it may even give rise to a map
\[  \iota_*  :  \widehat{G}_X \longrightarrow  \widehat{G} \]
where $G_X$ is a (split) group with dual group $\check{G}_X$ and $\widehat{G}_X$ and $\widehat{G}$ refer to the unitary dual of the relevant groups. Assuming for simplicity that this is the case, one has the following conjecture:
\vskip 15pt

\noindent{\bf \underline{Sakellaridis-Venkatesh Conjecture}}
\vskip 5pt
 One has a spectral decomposition
 \[  L^2(H \backslash G) \cong  \int_{\widehat{G}_X}   W(\pi) \otimes \iota_*(\pi)   \, d\mu(\pi) \]
where $\mu$ is the Plancherel measure of $\widehat{G}_X$ and $W(\pi)$ is some multiplicity space.
\vskip 5pt

 In particular, the class of the spectral measure of $L^2(H\backslash G)$ is absolutely continuous with respect to that of the pushforward by $\iota_*$ of the Plancherel measure  on $\widehat{G}_X$, and its support is contained in those Arthur parameters of $G$ which factor through $\iota$. In addition,  one expects that the multiplicity space $W(\pi)$  is related to the space of continuous $H$-invariant functionals on the representation $\iota_*(\pi)$.

\vskip 10pt

The main purpose of this paper is to verify the above conjecture in many cases when $H \backslash G$, or equivalently $G_X$, has low rank, and to specify the multiplicity space $W(\pi)$.
In particular, we demonstrate this conjecture for many cases when $G_X$ has rank $1$, and also some cases when $G_X$ has rank $2$ or $3$ (see the tables in \cite[\S 15 and 
\S 16]{Sakellaridis-Venkatesh}).  More precisely, our main result is:
\vskip 10pt

\begin{theorem}
The conjecture of Sakellaridis-Venkatesh holds for the spherical varieties $H \backslash G$ listed in the following tables.
\vskip 5pt

\begin{table}[ht]
\label{table1}
\begin{center}
\begin{tabular}{|c|c|c|c|} 
\hline 
$H \backslash G$ & $GL_{n-1} \backslash GL_n$ & $SO_{n-1}\backslash SO_n$ & $Sp_{2n-2} \backslash Sp_{2n}$ \\
\hline
 $G_X$ & $GL_2$  & $\widetilde{SL}_2$ & $SO(4)$ \\
 \hline
 \end{tabular}
 \end{center}
 \caption{Classical cases}
\end{table}

\begin{table}[ht]
\begin{center}
\begin{tabular}{|c|c|c|c|c|c|c|c|c|}
\hline
$H \backslash G$ & $SO_3 \backslash SL_3$ & 
$Sp_6 \backslash SL_6$ & $SL_3 \backslash G_2$ & $(J,\psi)\ \backslash G_2$
& $G_2 \backslash Spin_7$ & $G_2 \backslash Spin_8$ & $Spin_9 \backslash F_4$ &
$F_4 \backslash E_6$  \\
\hline
 $G_X$  & $\widetilde{SL}_3$  &  $SL_3$ & $\tilde{SL}_2$ & $PGL_3$   & $SL_2$ & $SL_2^3/\Delta \mu_2$ & $PGL_2$ & $SL_3$ \\
\hline
\end{tabular}
\end{center}
\caption{Exceptional cases}
\end{table}
\end{theorem}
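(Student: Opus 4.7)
The plan is to verify the theorem row by row in both tables, following a common template. For each pair $(H\backslash G, G_X)$ we shall realise $L^2(H\backslash G)$ as (a dense subspace of) the $H$-coinvariants in an auxiliary unitary representation $\Pi$ of a larger ambient group $\widetilde G$, chosen so that $\widetilde G$ contains a reductive dual pair $(G', G'_X)$ with $G' \supset H$ and $G'_X$ either $G_X$ itself, a cover of it, or a closely related group. A see-saw dual pair argument, combined with the explicit description of $\Pi^H$, then produces a $G_X$-equivariant decomposition of $L^2(H\backslash G)$, and the Plancherel theorem for $G_X$ finishes the argument.

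For the classical rows of Table~1 the auxiliary $\Pi$ will be a classical oscillator (Weil) representation. The relevant dual pairs are the familiar $(O_n, \widetilde{Sp}_2)$ and $(Sp_{2n}, O_4)$, acting on Weil representations of the appropriate type; in each case $H$ is the pointwise stabiliser of a single vector in the standard representation of $G$, which inserts into a see-saw diagram whose opposite group is precisely $G_X$ (or a cover/quotient thereof), explaining the appearance of $\widetilde{SL}_2$ and $SO(4)$ as $G_X$. The case $GL_{n-1}\backslash GL_n$ is handled separately: here $H\backslash G$ is the complement of the origin in the standard representation of $GL_n$, and $L^2(H\backslash G)$ can be analysed directly by Fourier analysis together with a Mackey-type argument, the $GL_2$ of the table arising as the group governing the Plancherel measure on the mirabolic quotient.

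For the exceptional rows of Table~2 the auxiliary $\Pi$ will be the minimal (or small) representation $\Pi_{\min}$ of an ambient exceptional group $\widetilde G \in \{G_2, F_4, E_6, E_7, E_8\}$ containing $(G, G_X)$ (or covers/quotients) as a reductive dual pair. These are exactly the exceptional dual pair correspondences developed over the last two decades by Gross--Wallach, Gan--Savin, Kobayashi--Savin, Magaard--Savin and others; the wavefront set of $\Pi_{\min}$ is a single small nilpotent orbit, and the $H$-invariants in $\Pi_{\min}$ are computable through a moment-map analysis on this orbit, case by case matching the $G_X$ listed in the table.

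The main obstacle in both tables is to show that the spectral decomposition produced by the see-saw is \emph{exactly} the pushforward $\iota_*\mu$, with the correct multiplicity spaces $W(\pi)$, and not merely absolutely continuous with respect to some related measure. This requires two ingredients. First, one must rule out Arthur parameters of $G$ outside the image of $\iota$, which on the see-saw side amounts to showing that $\Pi$ sees only the small representations of $G$ corresponding to $\iota$; for the classical cases this follows from the conservation relations for theta correspondence, and for the exceptional cases from known dimension formulas for $H$-invariants in $\Pi_{\min}$. Second, one must identify $W(\pi)$ with the space of continuous $H$-invariant functionals on $\iota_*(\pi)$, which comes down to a Frobenius-type reciprocity for the see-saw together with a careful matching of local multiplicities; this second step will be the most delicate part of the argument and will be carried out case by case.
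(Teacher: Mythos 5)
Your broad strategy—restrict the oscillator (resp.\ minimal) representation of an ambient group to a reductive dual pair $G_X\times G$ and extract $L^2(H\backslash G)$ from it—is exactly the one the paper uses. But the framing ``$L^2(H\backslash G)$ as the $H$-coinvariants of $\Pi$'' is not how the argument runs, and a couple of the ingredients you propose do not appear (and are not needed) in the actual proof.

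The paper's mechanism is a comparison of two decompositions of $\Pi$ restricted to $P\times G$, where $P$ is a Siegel (or Heisenberg) parabolic of the \emph{small} group $G_X$. On the one hand, the Schr\"odinger model of $\Pi$ is explicitly computed; the $P\times G$-orbits are Stiefel-type and one identifies, for each generic character $\chi_Y$ of the unipotent radical $N$, that the $\chi_Y$-eigenspace of $\Pi$ is $L^2(G_{T_Y}\backslash G)$ with $G_{T_Y}=H$. On the other hand, the abstract decomposition $\Pi=\int \pi\otimes\Theta(\pi)\,d\mu_\theta$ together with Mackey theory for $\pi|_P$ writes the same eigenspace as $\int W_{\chi_Y}(\pi)\otimes\Theta(\pi)\,d\mu_\theta$. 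Comparing the two $N$-spectra gives the theorem. Two further inputs you don't mention are essential: (i) the identification of $\mu_\theta$ with the Plancherel measure of $G_X$ is obtained not from conservation relations but by computing in the \emph{mixed model} and showing $\Pi$ is weakly equivalent to $L^2(G_X)$ (the stable-range setup in the classical cases, and the analogous rank-one computation culminating in Corollary \ref{C:rank1} in the exceptional cases); and (ii) the multiplicity spaces $W_{\chi}(\pi)$ are pinned down by the Bessel--Plancherel theorem (Theorem \ref{T:bessel}), which identifies them with Whittaker/Bessel functionals $Wh_\chi(\pi)$.

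On the classical side the paper handles $GL_{n-1}\backslash GL_n$ uniformly with the other two entries by allowing $D$ to be the split quadratic or quaternion algebra in the $\varepsilon$-Hermitian formalism, so no separate Fourier-analytic argument is required. On the exceptional side, the paper does not invoke moment-map or wavefront-set analysis for the $H$-invariants; the relevant orbit structure is computed directly in the Jordan-algebra model (Propositions \ref{P:jordan}, \ref{P:D4} and the orbit analysis of Section 6). Your proposal, as written, would leave the key comparison of $N$-spectra and the identification of the multiplicity spaces unaddressed; these, not the see-saw diagram itself, are where the work lies.
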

We refer the reader to the main body of the paper for the precise statements and unexplained notation.
\vskip 5pt

The theorem is proved using the technique of theta correspondence. 
More precisely, it turns out that for the groups listed in the above table, one has a reductive dual pair
\[  G_X \times G \subset S \]
for some larger group $S$. One then studies the restriction of the minimal representation of $S$ to the subgroup $G_X \times G$. In the context of theta correspondence in smooth representation theory,  one can typically show the following rough statement: 
\[  \text{\em A representation $\pi$ of $G$ has $\psi$-generic (and hence nonzero) theta lift to $G_X$} \]
\[  \Updownarrow \]
\[   \text{$\pi$ has nonzero $H$-period.} \]
Our main theorem is thus the $L^2$-manifestation of this phenomenon, giving a description of $L^2(H \backslash G)$ in terms of $L^2(G_X)$.  
\vskip 5pt

This idea is not really new: a well known example of this kind of result is the correspondence between the irreducible components of the spherical harmonics on $\R^{n}$ under the action of $O(n,\R)$, and holomorphic discrete series of the group $\widetilde{SL}(2,\R)$, the double cover of $SL(2,\R)$. Another example is given by the classical paper of Rallis and Schiffmann \cite{Rallis-Schiffman:Weil} where they used the oscillator representation to relate the discrete spectrum of $L^{2}(O(p,q-1)\backslash O(p,q))$ with the discrete series representations of $\widetilde{SL}(2,\R)$. Later, Howe \cite{Howe:someresults} showed how these results can be inferred from his general theory of reductive dual pairs, and essentially provided a description of the Plancherel measure of $L^{2}(O(p,q-1)\backslash O(p,q))$ in terms of the representation theory of $\widetilde{SL}(2,\R)$. Then \O{}rsted and Zhang \cite{Zhang-Orsted:L2II} proved a similar result for the space $L^{2}(U(p,q-1)\backslash U(p,q))$ in terms of the representation theory of $U(1,1)$. We give a more steamlined treatment of these classical cases in Section 2, which accounts for Table 1. The rest of the paper is then
 devoted to the exceptional cases listed in Table 2.
\vskip 5pt

\vskip 10pt

\noindent{\bf Acknowledgments:} Both authors would like to pay tribute to Nolan Wallach for  his guidance, encouragement and friendship over the past few years. It is an honor to be his colleague and student respectively. We wish him all the best in his retirement from UCSD, and hope to continue to interact with him mathematically and personally for many years to come. 
\vskip 10pt

The research of the first author is partially supported by NSF grant 0801071 and a startup grant from the National University of Singapore. 

  \section{\bf Classical Dual Pairs}

 We begin by introducing the classical dual pairs.
\vskip 10pt

\subsection{Division algebra $D$.}

Let $\k$ be a local field, and let $|\cdot|$ denote its absolute value. Let $D = \k$, a quadratic field extension of $\k$ or the quaternion division $\k$-algebra, and let $x \mapsto \overline{x}$ be its canonical involution. The case when $D$ is the split quadratic algebra or quaternion algebra can also be included in the discussion, but for simplicity, we shall stick with division algebras.  We have the \emph{trace} map $Tr(x) = x + \overline{x} \in \k$ and the \emph{norm} map $Q(x)  = x \cdot \overline{x} \in \k$. 
 \vskip 5pt

\subsection{\bf  Hermitian $D$-modules.}

Let $V$ and $W$ be two right $D$-modules. We will denote the set of right $D$-module morphisms between $V$ and $W$ by
\[
 Hom_{D}(V,W)=\{T:V\longrightarrow W \,| \, \mbox{$T(v_{1}a+v_{2}b)=T(v_{1})a+T(v_{2})b$ for all $v_{1}$, $v_{2} \in V$, $a$, $b\in D$}\}.
\]
In the same way, if $V$ and $W$ are two left $D$-modules, we set 
\[
 Hom_{D}(V,W)=\{T:V\longrightarrow W \,| \, \mbox{$(av_{1}+bv_{2})T=a(v_{1})T+b(v_{2})T$ for all $v_{1}$, $v_{2} \in V$, $a$, $b\in D$}\}.
\]
If $V=W$, we will denote this set by $End_{D}(V)$. Notice that for right $D$-module morphisms we are putting the argument on the right, while for left $D$-module morphisms we are putting it on the left.
\vskip 5pt

 In general, for every statement involving \emph{right} $D$-modules one can make an analogous one involving \emph{left} $D$-modules. From now on, we will focus on right $D$-modules, and we will let the reader with the task of making the corresponding definitions and statements involving left $D$-modules. Set
\[
 GL(V,D)=\{T\in End_{D}(V) \, | \, \mbox{T is invertible}\}.
\]
When it is clear from the context what the division algebra is, we will just denote this group  by $GL(V)$.
\vskip 5pt

 Let $V'$ be the set of right $D$-linear functionals on $V$. There is a natural left $D$-module structure on $V'$ given by setting 
\[  (a\lambda)(v)=a\lambda(v), \quad \text{for all $a\in D$, $v\in V$, and $\lambda \in V'$.} \]
Observe that with this structure, $W\otimes_{D}V'$ is naturally isomorphic to $Hom_{D}(V,W)$ as a $\k$-vector space. Given $T\in Hom_{D}(V,W)$, we will define an element in $Hom_{D}(W',V')$, which we will also denote $T$, by setting $(\lambda T)(v):=\lambda(Tv)$. This correspondence gives rise to natural isomorphisms between $End_{D}(V)$ and $End_{D}(V')$ and between $GL(V)$ and $GL(V')$.

\begin{definition}
Let $\varepsilon=\pm 1$. We say that $(V,B)$ is a right $\varepsilon$-Hermitian $D$-module, if $V$ is a right $D$-module and $B$ is an $\varepsilon$-Hermitian form, i.e $B:V\times V \longrightarrow D$ is a map such that
\begin{enumerate}
 \item $B$ is \emph{sesquilinear}. That is, for all $v_{1}$, $v_{2}$, $v_{3}\in V$, $a$, $b\in D$, 
  \[ B(v_{1},v_{2}a+v_{3}b)=B(v_{1},v_{2})a+B(v_{1},v_{3})b \quad \text{and} \quad B(v_{1}a + v_{2}b,v_{3})=\overline{a}B(v_{1},v_{3})+\overline{b}B(v_{2},v_{3}).\]
\item $B$ is $\varepsilon$-\emph{Hermitian}. That is, 
$$
B(v,w)=\varepsilon\overline{B(w,v)} \qquad \mbox{ for all $v,w\in V$.}
$$
\item $B$ is \emph{non-degenerate}. 
\end{enumerate} 
\end{definition}
To define left $\varepsilon$-Hermitian $D$-modules $(V,B)$, we just have to replace the sesquilinear condition by
$$B(av_{1}+bv_{2},v_{3})=aB(v_{1},v_{3})+bB(v_{2},v_{3}) \quad \text{and} \quad B(v_{1},av_{2} + bv_{3})=B(v_{1},v_{2})\overline{a}+B(v_{1},v_{3})\overline{b},$$
for all $v_{1}$, $v_{2}$, $v_{3}\in V$, $a$, $b\in D$.

Given a right $\varepsilon$-Hermitian $D$-module $(V,B)$, we will define
\[
G(V,B)=\{g\in GL(V) \, | \, \mbox{$B(g v,g w)=B(v,w)$ for all $v$, $w\in V$}\},
\]
to be the subgroup of $GL(V)$ preserving the $\varepsilon$-Hermitian form $B$. When there is no risk of confusion regarding $B$, we will denote this group just by $G(V)$. Usually, $1$-Hermitian $D$-modules are simply called Hermitian, while $-1$-Hermitian $D$-modules are called skew-Hermitian.  

\vskip 5pt

Given a right $\varepsilon$-Hermitian $D$-module $(V,B)$, we can construct a left $\varepsilon$-Hermitian $D$-module $(V^{\ast},B^{\ast})$ in the following way: as a set, $V^{\ast}$ will be the set of symbols $\{v^{\ast}\, |\, v\in V\}$. Then we give $V^{\ast}$ a left $D$-module structure by setting, for all $v$, $w \in V$, $a\in D$, 
\[  \text{$v^{\ast}+w^{\ast}=(v+w)^{\ast}$ and $av^{\ast}=(v\overline{a})^{\ast}$.} \]
 Finally, we set 
\[
 B^{\ast}(v^{\ast},w^{\ast})=\overline{B(w,v)} \qquad \mbox{for all $v$, $w\in V$.}
\]
 In an analogous way, if $V$ is a left $D$-module, we can define a right $D$-module $V^{\ast}$, and $V^{\ast\ast}$ is naturally isomorphic with $V$.  Given $T\in End_{D}(V)$, we can define $T^{\ast}\in End_{D}(V^{\ast})$ by setting $v^{\ast}T^{\ast}:=(Tv)^{\ast}$. With this definition, it is easily seen that $(TS)^{\ast}=S^{\ast}T^{\ast}$, for all $S$, $T\in End_{D}(V)$. Therefore the map $g\mapsto (g^{\ast})^{-1}$ defines an algebraic group isomorphism between $GL(V)$ and $GL(V^{\ast})$. 
 
 \vskip 5pt
 
 Now observe that the form $B$ induces a left $D$-module isomorphism $B^{\flat}:V^{\ast}\longrightarrow V'$ given by $B^{\flat}(v^{\ast})(w)=B(v,w)$ for $v$, $w\in V$. In what follows, we will make implicit use of this map to identify this two spaces. With this identification we can think of $T^{\ast}$ as a map in $End_{D}(V)$ defined by $v^{\ast}(T^{\ast}w):=(v^{\ast}T^{\ast})(w)$, i.e, $T^{\ast}$ is defined by the condition that
\[
 B(v,T^{\ast}w)=B(Tv,w) \qquad \mbox{for all $v$, $w \in V$}.
\]
Observe that this agrees with the usual definition of $T^{\ast}$.
\vskip 5pt

A $D$-submodule $X\subset V$ is said to be \emph{totally isotropic} if $B|_{X\times X}=0$. If $X$ is a totally isotropic submodule, then there exists a totally isotropic submodule $Y\subset V$ such that $B|_{X\oplus Y\times X\oplus Y}$ is nondegenerate.  If we set
\[
 U=(X\oplus Y)^{\perp}:=\{u\in V\, | \, \mbox{$B(u,w)=0$ for all $w\in X\oplus Y$}\}, 
\]
then $V=X\oplus Y \oplus U$, and $B|_{U\times U}$ is non-degenerate. In this case we say that $X$ and $Y$ are totally isotropic, \emph{complementary} submodules. Observe that then $B^{\flat}|_{Y^{\ast}}:Y^{\ast}\longrightarrow X'$ is an isomorphism. As before we will make implicit use of this isomorphism to identify $Y^{\ast}$ with $X'$. 
\vskip 5pt

\subsection{\bf Reductive dual pairs.}
Let $(V,B_{V})$ be a right $\varepsilon_{V}$-Hermitian $D$-module and $(W,B_{W})$ a right $\varepsilon_{W}$-Hermitian $D$-module such that $\varepsilon_{V}\varepsilon_{W}=-1$. On the $\k$-vector space $V\otimes_{D} W^{\ast}$ we can define a symplectic form $B$ by setting
\[
 B(v_{1}\otimes_{D} \lambda_{1}, v_{2}\otimes_{D} \lambda_{2})=\Tr(B_{W}(w_{1},w_{2})B_{V}^{\ast}(\lambda_{2},\lambda_{1})) \qquad \mbox{for all $v_{1}$, $v_{2}\in V$ and $\lambda_{1}$, $\lambda_{2} \in V^{\ast}$.}
\]
Let
\[
 Sp(V\otimes_{D} W^{\ast})=\{g\in GL(V\otimes_{D} W^{\ast},\k) \, | \, \mbox{$B(g v, g w)= B(v, w)$ for all $v$, $w \in V\otimes_{D} W^{\ast}$} \}.
\]
Observe that 
\[  Sp(V\otimes_{D} W^{\ast})=G(V\otimes_{D} W^{\ast},B)=G(V\otimes_{D} W^{\ast}). \]
 Moreover, there is a natural map $G(V) \times G(W) \longrightarrow Sp(V\otimes_{D} W^{\ast})$ given by 
\[
 (g_{1},g_{2})\cdot v\otimes_{D}\lambda = g_{1}v\otimes \lambda g_{2}^{\ast}.
\]
We will use this map to identify $G(V)$ and $G(W)$ with subgroups of $Sp(V\otimes_{D} W^{\ast})$. These two subgroups are mutual commutants of each other, and is an example of a {\em reductive dual pair}.

\vskip 5pt

\subsection{\bf Metaplectic cover.}
The group  $Sp(V\otimes_{D} W^{\ast})$ has an $S^1$- cover  $Mp(V\otimes_{D} W^{\ast})$ which is called a metaplectic group.
It is known that this $S^1$-cover splits over the subgroups $G(V)$ and $G(W)$, except when $V$ is an odd dimensional quadratic space, in which it does not split over $G(W)$.  In this exceptional case, we shall simply redefine $G(W)$ to be the induced double cover, so as to simplify notation.  
We remark also that though the splittings (when they exist) are not necessarily unique, the precise choice of the splittings is of secondary importance in this paper. 

\vskip 5pt

\subsection{\bf Siegel parabolic.}
 Assume in addition that there is a complete polarization
$
 W=E\oplus F,
$
where $E$, $F$, are complementary totally isotropic subspaces of $W$.
We will use the $\varepsilon_{W}$-Hermitian form $B_{W}$ to identify $F^{\ast}$ with $E'$ by setting $f^{\ast}(e)=B_{W}(f,e)$. Observe that this identification induces an identification between $E^{\ast}$ and $F'$ given by
\[
 e^{\ast}(f)=\overline{f^{\ast}(e)}=\overline{B_{W}(f,e)}=\varepsilon_{W}B_{W}(e,f).
\]
In what follows, we will use this identifications between $F^{\ast}$ and $E'$, and between $E^{\ast}$ and $F'$. 

\vskip 5pt

Let 
\[  P=\{p\in G(W)\, | \, p\cdot E =E\} \]
be the Siegel parabolic subgroup of $G(W)$, and let $P=MN$ be its Langlands decomposition.
To give a description of the groups $M$ and $N$, we introduce some more notation.
\vskip 5pt

Let $A\in End_{D}(E)$. We will define $A^{\ast}\in End_{D}(F)$, by setting, for all $e\in E$, $f \in F$,
\begin{equation}
 B_{W}(e,A^{\ast}f) = B_{W}(Ae,f).
\end{equation}
Now given $T\in Hom_{D}(F,E)$, define $T^{\ast}\in Hom_{D}(F,E)$ by setting, for all $f_{1}$, $f_{2}\in F$,
\begin{equation}
B_{W}(f_{1},T^{\ast}f_{2})  =  \varepsilon_{W}B_{W}(Tf_{1},f_{2}).
\end{equation}
Given $\varepsilon=\pm 1$, set 
\[
 Hom_{D}(F,E)_{\varepsilon}=\{T\in Hom_{D}(F,E)\, | \, T^{\ast}=\varepsilon T\}.
\]
It is then clear that $Hom_{D}(F,E)=Hom_{D}(F,E)_{1}\oplus Hom_{D}(F,E)_{-1}$.
 \vskip 5pt

Now we have:
\[
 M=\left.\left\{\left[\begin{array}{cc} A & \\ & (A^{\ast})^{-1} \end{array}\right] \, \right| \, A \in GL(E) \right\} \cong GL(E)
\]
and
\[
 N=\left.\left\{\left[\begin{array}{cc} 1 & X \\ & 1  \end{array}\right] \, \right| \, X^{\ast}=-\varepsilon_{W}X \right\} \cong Hom_{D}(F,E)_{-\varepsilon_{W}}. 
\]

\vskip 5pt

\subsection{\bf Characters of $N$.}
Given $Y\in Hom_{D}(E,F)_{-\varepsilon_{W}}$, define a character
\[
 \chi_{Y}\left(\left[\begin{array}{cc} 1 & X \\ & 1  \end{array}\right]\right)=\chi(\Tr_{F}(YX)).
\]
Here $\Tr_{F}$ is the trace of $YX:F\longrightarrow F$ seen as a map between $\k$ vector spaces. The map $Y\mapsto \chi_{Y}$ defines a group isomorphism between $Hom_{D}(E,F)_{-\varepsilon_{W}}$ and $\hat{N}$. 
\vskip 5pt

Observe that the adjoint action of $M$ on $N$  induces an action of $M$ on $\hat{N}$. Using the isomorphisms of $M\cong GL(E)$ and $\hat{N}\cong Hom_{D}(E,F)_{-\varepsilon_{W}}$, we can describe the action of $M$ on $\hat{N}$ by the formula 
\[  A\cdot  Y = (A^{\ast})^{-1}Y A^{-1} \quad \text{for all $A\in GL(E)$, $Y\in Hom_{D}(E,F)_{-\varepsilon_{W}}$. } \]
Given $Y\in Hom_{D}(E,F)_{-\varepsilon_{W}}$ we can define a $-\epsilon_W$-Hermitian form on $E$, that we will also denote Y, by setting
\[
 Y(e_{1},e_{2})=e_{1}^{\ast}(Ye_{2})=\varepsilon_{W}B_{W}(e_{1},Ye_{2}).
\]
Hence the action of $M$ on $\hat{N}$ is equivalent to the action of $GL(E)$ on sesquilinear, $-\varepsilon_{W}$-Hermitian forms on $E$.

 
\vskip 5pt

 Let $\Omega$ be the set of orbits for the action of $M$ on $\hat{N}$. Given $Y\in Hom_{D}(E,F)_{-\varepsilon_{W}}$, let $\Orb = \Orb_{Y}$ be its orbit under the action of $GL(E)$ and set 
\[
 M_{\chi_{Y}}= \{m\in M \, | \, \mbox{$\chi_{Y}(m^{-1}nm)=\chi_{Y}(n)$ for all $n\in N$}\}.
\]
Using the identification of $M$ with $GL(E)$, and of $\hat{N}$ with $Hom_{D}(E,F)_{-\varepsilon_{W}}$, we see that
\begin{eqnarray*}
 M_{\chi_{Y}} \cong \{A \in GL(E) \, | \, (A^{\ast})^{-1}YA^{-1}=Y\}=\{A \in GL(E) \, | \, Y=A^{\ast}YA\}.
\end{eqnarray*}
\vskip 10pt

\section{\bf Oscillator Representation}

After the preparation of the previous section, we can now consider the theta correspondence associated to the dual pair $G(V) \times G(W)$ and use it to establish certain cases of the  Sakellaridis-Venkatesh conjecture for classical groups.
\vskip 5pt

\subsection{\bf Oscillator representation and theta correspondence.}
    Fix a nontrivial unitary character $\chi$ of $\k$. Associated to this character,  there exists a very special representation of the metaplectic group, called the oscillator representation $\Pi$ of $Mp(V\otimes_{D} W^{\ast})$.
On restricting this representation to $G(V) \times G(W)$,  one obtains an injective map
\[
 \theta:A\subset G(W)^{\wedge} \longrightarrow G(V)^{\wedge}
\]
and a measure $\mu_{\theta}$ on $\widetilde{G}(W)^{\wedge}$, such that
\begin{equation} \label{abstract}
 \Pi|_{G(W)\times G(V)} = \int_{A} \pi\otimes \theta(\pi) \, d\mu_{\theta}(\pi),
\end{equation}
as a $G(W)\times G(V)$-module. 
\vskip 5pt

We may restrict $\Pi$ further to $P \times G(V)$.
By Mackey theory, for a unitary representation $\pi$ of $G(W)$, 
 \begin{equation}
 \pi|_{P}=\bigoplus_{\Orb_{Y}\in \Omega} \Ind_{M_{\chi_{Y}}N}^{P} W_{\chi_{Y}}(\pi), \label{eq:restrictiontoparabolic}
\end{equation}
where $W_{\chi_{Y}}(\pi)$ is an $M_{\chi_{Y}}N$-module such that $n\cdot \lambda=\chi_{Y}(n)\lambda$, for all $n \in N$, $\lambda \in W_{\chi_{Y}}(\pi)$.
Therefore, from  (\ref{abstract}) and
(\ref{eq:restrictiontoparabolic}), we have:
\begin{equation}
\Pi = \bigoplus_{\Orb_{Y}\in \Omega}\int_{A\subset \widehat{G}(W)} \Ind_{M_{\chi_{Y}}N}^{P}W_{\chi_{Y}}(\pi)\otimes \Theta(\pi) \, d\mu_{\theta}(\pi). \label{eq:L2Vabstract}
\end{equation}
\vskip 5pt

\subsection{The Schr\"odinger model}
On the other hand, we may compute the restriction of $\Pi$ to $P \times G(V)$ using an explicit model of $\Pi$. The complete polarization
$ W=E\oplus F$ induces a complete polarization
\[
 V\otimes_{D}W^{\ast}=V\otimes_{D} E^{\ast} \oplus V\otimes_{D} F^{\ast}.
\]
With the identifications introduced above, $V\otimes_{D}F^{\ast}=Hom_{D}(E,V)$, and the oscillator representation $\Pi$ can be realized on the Hilbert space $L^{2}(Hom_{D}(E,V))$; this realization of $\Pi$ is called the Schrodinger model. The action of $P \times G(V)$ in this model can be described as follows.

 \vskip 5pt
 
 Let $B_{V}^{\flat}:V\longrightarrow (V^{\ast})'$ be given by
\[
 (w^{\ast})(B_{V}^{\flat}v)=B_{V}(w,v).
\]
 Then  the action of $P\times G(V)$ on $L^{2}(Hom_{D}(E,V))$ is given by the formulas
\begin{eqnarray}
 \left[\begin{array}{cc} 1 & X \\ & 1 \end{array}\right] \cdot \phi(T) & = & \chi(\Tr_{F}(X T^{\ast}B_{V}^{\flat}T))\phi(T), \label{eq:firstPtimesUaction} \qquad \mbox{for all $X \in Hom_{D}(F,E)_{-\varepsilon_{W}}$}, \\ 
\left[\begin{array}{cc} A &  \\ & (A^{\ast})^{-1} \end{array}\right] \cdot \phi(T) & = & |\mbox{$\det_{F}(A)$}|^{-\dim_{D}(V)/2}\phi(T A),\qquad \mbox{for all $A \in GL(E)$}, \\
g\cdot \phi(T) & = & \phi(g^{-1} T), \qquad \mbox{for all $g \in G(V)$}. \label{eq:lastPtimesUaction}
\end{eqnarray}
Let
$$
\Omega_{V}=\{\Orb_{Y}\, |  \, \mbox{$\Orb_{Y}$ is open in $Hom_{D}(E,F)_{-\varepsilon_{W}}$, and $Y= T^{\ast}B_{V}^{\flat}T$ for some $T\in Hom_{D}(E,V)$}\}.
$$
Given $\Orb_{Y}\in \Omega_{V}$, we will set
\[
\Upsilon_{Y}=\{T\in Hom_{D}(E,V) \, | \, T^{\ast}B_{V}^{\flat}T\in \Orb_{Y}\}.
\]
Then
\[
 \bigcup_{\Orb_{Y}\in \Omega_{V}} \Upsilon_{Y} \subset Hom_{D}(E,V)
\]
is a dense open subset, and its complement in $Hom_{D}(E,V)$ has measure 0. Therefore
\begin{equation}
 L^{2}(Hom_{D}(E,V)) \cong \bigoplus_{\Orb_{Y} \in \Omega_{V}} L^{2}(\Upsilon_{Y}) \label{eq:L2V-omegaVorbits}
\end{equation}
and each of these spaces is clearly $P\times G(V)$-invariant, according to the formulas given in equations (\ref{eq:firstPtimesUaction})--(\ref{eq:lastPtimesUaction}). 
\vskip 5pt

We want to show that the spaces $L^{2}(\Upsilon_{Y})$ are equivalent to some induced representation for $P\times G(V)$. To do this, observe that the ``geometric'' part of the action of $P\times G(V)$ on $L^{2}(\Upsilon_{Y})$ is transitive on $\Upsilon_{Y}$. In other words, under the action of $P\times G(V)$ on $Hom_{D}(E,V)$ given by
\[
 \left(\left[\begin{array}{cc} A & X \\ & (A^{\ast})^{-1} \end{array}\right],g\right)\cdot T= g T A^{-1} \qquad \mbox{for all $\left[\begin{array}{cc} A & X \\ & (A^{\ast})^{-1} \end{array}\right] \in P$, $g\in G(V)$ and $T\in Hom_{D}(E,V)$,}
\]
each of the $\Upsilon_{Y}$'s is a single orbit. Fix $T_{Y}\in \Upsilon_{Y}$   such that $T^{\ast}_{Y}B_{V}^{\flat}T_{Y}=Y$. The stabilizer of $T_{Y}$ in $P\times G(V)$ is the subgroup
\[
 (P\times G(V))_{T_{Y}} =  \left.\left\{\left(\left[\begin{array}{cc} A & X \\ & (A^{\ast})^{-1} \end{array}\right],g\right) \in P\times G(V) \, \right| \, gT_{Y}=T_{Y}A\right\}.
\]
Let $g\in G(V)$ be such that $gT_{Y}=T_{Y}A$ for some $A\in GL(E)$. Then by the definition of $G(V)$
\[
 Y=T_{Y}^{\ast}B_{V}^{\flat}T_{Y}=T_{Y}^{\ast}g^{\ast}B_{V}^{\flat}gT_{Y}=A^{\ast}YA,
\]
that is, $A$ is an element in $M_{\chi_{Y}}$. 
\vskip 5pt

Define an equivalence relation in $Hom_{D}(E,V)$ by setting $T \sim S $ if $T=SA$ for some $A\in M_{\chi_{Y}}$. Given $T\in Hom_{D}(E,V)$ we will denote its equivalence class, under this equivalence relation, by $[T]$. Let 
\[  P_{M_{\chi_{Y}}}(Hom_{D}(E,V))=\{[T]\, | \, T\in Hom_{D}(E,V)\}.\]
 Since $G(V)$ acts by left multiplication on $Hom_{D}(E,V)$, there is natural action of $G(V)$ on the space $P_{M_{\chi_{Y}}}(Hom_{D}(E,V))$. Set
\[
 G(V)_{T_{Y}} = \{g\in G(V)\, | \, gT_{Y}=T_{Y}\}
\quad \text{and} \quad
 G(V)_{[T_{Y}]} = \{g\in G(V)\, | \, g[T_{Y}]=[T_{Y}]\}.
\]
Then $(P\times G(V))_{T_{Y}}\subset M_{\chi_{Y}}\times G(V)_{[T_{Y}]}$, and according to equations (\ref{eq:firstPtimesUaction})-(\ref{eq:lastPtimesUaction}),
\begin{eqnarray}
  L^{2}(\Upsilon_{Y}) & \cong & \Ind_{(P\times G(V))_{T_{Y}}}^{P\times G(V)} \chi_{Y} \\
& \cong & \Ind_{M_{\chi_{Y}}N\times G(V)_{[T_{Y}]}}^{P\times G(V)}\Ind_{(P\times G(V))_{T_{Y}}}^{M_{\chi_{Y}}N\times G(V)_{[T_{Y}]}} \chi_{Y} \label{eq:inductioninstages}
\end{eqnarray}
Now consider the short exact sequence
\[
 1\longrightarrow 1\times G(V)_{T_{Y}}\longrightarrow (P\times G(V))_{T_{Y}} \stackrel{q}{\longrightarrow} M_{\chi_{Y}}N\longrightarrow 1,
\]
where $q$ is the projection into the first component. Observe that the map $q$ induces an isomorphism $G(V)_{T_{Y}}\backslash G(V)_{[T_{Y}]} \cong M_{\chi_{Y}}$. From this exact sequence and equation (\ref{eq:inductioninstages}), we get that
\begin{eqnarray}
 L^{2}(\Upsilon_{Y})  & \cong &  \Ind_{M_{\chi_{Y}}N\times G(V)_{[T_{Y}]}}^{P\times G(V)} L^{2}(G(V)_{T_{Y}}\backslash G(V)_{[T_{Y}]})_{\chi_{Y}} \nonumber\\
& \cong & \Ind_{M_{\chi_{Y}}N}^{P}L^{2}(G(V)_{T_{Y}}\backslash G(V))_{\chi_{Y}}. \label{eq:L2Xdexplicit}
\end{eqnarray}
The action of $M_{\chi_{Y}}N$ on $L^{2}(G(V)_{T_{Y}}\backslash G(V)_{[T_{Y}]})_{\chi_{Y}}$ is given as follows: $N$ acts by the character $\chi_{Y}$, and $M_{\chi_{Y}}$ acts on $L^{2}(G(V)_{T_{Y}}\backslash G(V)_{[T_{Y}]})_{\chi_{Y}}$ on the left using the isomorphism $G(V)_{T_{Y}}\backslash G(V)_{[T_{Y}]} \cong M_{\chi_{Y}}$. Then according to equations (\ref{eq:L2V-omegaVorbits}) and (\ref{eq:L2Xdexplicit})
\begin{equation}
 L^{2}(Hom_{D}(E,V)) \cong \bigoplus_{\Orb_{Y} \in \Omega_{V}} \Ind_{M_{\chi_{Y}}N}^{P}L^{2}(G(V)_{T_{Y}}\backslash G(V))_{\chi_{Y}}. \label{eq:L2Vexplicit}
\end{equation}
But now, from equations (\ref{eq:L2Vabstract}), (\ref{eq:L2Vexplicit}) and the uniqueness of the decomposition of the $N$-spectrum, we obtain:

\begin{proposition}  
As an $M_{\chi_{Y}}N\times G(V)$-module, 
 \begin{equation}
 L^{2}(G(V)_{T_{Y}}\backslash G(V))_{\chi_{Y}} \cong \int_{A \subset \widehat{G}(W)} W_{\chi_{Y}}(\pi)\otimes \Theta(\pi) \, d\mu_{\theta}(\pi), \label{eq:L2(G(Vd)|G(V))decomposition}
\end{equation}
  \end{proposition}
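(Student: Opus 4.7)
The plan is to compare the two descriptions of $\Pi|_{P \times G(V)}$ established above: the abstract one coming from theta correspondence combined with Mackey theory, equation (\ref{eq:L2Vabstract}), and the concrete one coming from the Schr\"odinger model, equation (\ref{eq:L2Vexplicit}). Both are decompositions of the same unitary $P \times G(V)$-module indexed by the $M$-orbits on $\widehat{N}$, with the summand labelled by $\Orb_Y$ obtained in each case by inducing from $M_{\chi_Y}N$ to $P$ a representation on which $N$ acts through $\chi_Y$. The goal is to match the summands term by term and then strip off the common $\Ind_{M_{\chi_Y}N}^P$.

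The key tool is the uniqueness of the $N$-spectrum. Since $N$ is abelian and normal in $P$, the restriction $\Pi|_N$ admits a canonical direct-integral decomposition over $\widehat{N}$; grouping the fibres into $M$-orbits under the adjoint action gives a canonical $P$-equivariant decomposition of $\Pi$ indexed by $\Omega$, in which pieces supported on distinct orbits are mutually disjoint as $N$-modules. Applying this to both (\ref{eq:L2Vabstract}) and (\ref{eq:L2Vexplicit}) forces, for each $\Orb_Y \in \Omega_V$, an isomorphism of $P \times G(V)$-modules
\[ \Ind_{M_{\chi_Y}N}^{P} L^{2}(G(V)_{T_Y}\backslash G(V))_{\chi_Y} \;\cong\; \Ind_{M_{\chi_Y}N}^{P} \int_{A} W_{\chi_Y}(\pi) \otimes \Theta(\pi) \, d\mu_\theta(\pi), \]
and it forces the contributions of the orbits in $\Omega \setminus \Omega_V$ on the theta side to vanish. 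Passing to the $\chi_Y$-isotypic subspace on each side (equivalently, applying Frobenius reciprocity for this induced representation) then strips away the induction and yields the asserted isomorphism of $M_{\chi_Y}N \times G(V)$-modules.

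The main obstacle is equivariance for the full group $M_{\chi_Y}N \times G(V)$ rather than merely for $N$. The spectral projection onto the $\chi_Y$-isotypic part is automatically $N$-equivariant, but one needs to check that $M_{\chi_Y}$ and $G(V)$ preserve this subspace and that the identification with the uninduced representation intertwines their actions. This follows directly from the explicit Schr\"odinger-model formulas (\ref{eq:firstPtimesUaction})--(\ref{eq:lastPtimesUaction}): the $G(V)$-action commutes with the $N$-action, and elements of $M_{\chi_Y}$ stabilize $\chi_Y$ by definition, so the $\chi_Y$-isotypic subspace is stable under $M_{\chi_Y}N \times G(V)$ and the resulting identification is automatically equivariant for this larger group.
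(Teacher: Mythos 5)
Your proposal takes essentially the same route as the paper, which deduces the proposition by comparing equations (\ref{eq:L2Vabstract}) and (\ref{eq:L2Vexplicit}) and invoking ``the uniqueness of the decomposition of the $N$-spectrum.'' You have simply unpacked that one-line argument in more detail, correctly accounting for the vanishing of contributions from orbits outside $\Omega_V$ and for the $M_{\chi_Y}N\times G(V)$-equivariance of the isotypic projection.
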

\vskip 5pt

Our goal now is to give a more explicit characterization of the spaces $W_{\chi_{Y}}(\pi)$ and the measure $\mu_{\theta}$ appearing in this formula.
\vskip 5pt

\subsection{Stable range.}
 
Let $(V,B_{V})$ and $(W,B_{W})$ be as before.  Assume now that there is a totally isotropic $D$-submodule $X\subset V$ such that $\dim_{D}(X)=\dim_{D}(W)$; in other words,  the dual pair $(G(V),G(W))$ is in the \emph{stable range}. In this case, the map 
\[  \theta:\widehat{G}(W)\longrightarrow \widehat{G}(V) \]
 can be understood in terms of the results of Jian-Shu Li \cite{J.S. Li:Singular}. The measure $\mu_{\theta}$ appearing in equation (\ref{abstract}) is also known in this case: it is precisely the Plancherel measure of the group $G(W)$. In order to make this paper more self-contained, we will include an alternative calculation of the measure $\mu_{\theta}$ using the so-called \emph{mixed model} of the oscillator representation.

\vskip 5pt

\subsection{\bf Mixed model.}
Let $X$, $Y$ be a totally isotropic, complementary subspaces of $V$ such that $\dim_{D}(X)=\dim_{D}(W)$, and let $U=(X\oplus Y)^{\perp}$. We will use $B_{V}$ to identify $Y$ with $(X^{\ast})'$ by setting
\[
 (x^{\ast})y=B_{V}(x,y), \qquad \mbox{for all $x\in X$, $y\in Y$.}
\]
Given $A\in GL(X)$, we can use the above identification to define an element $A^{\ast}\in GL(Y)$ in the following way: given $x\in X$ and $y\in Y$, we will set $(x^{\ast})(A^{\ast}y) :=  (x^{\ast}A^{\ast})y$, i.e., we will define $A^{\ast} \in GL(Y)$ by requiring that
\[
 B_{V}(x,A^{\ast}y)  =  B_{V}(Ax,y), \qquad \mbox{for all $x\in X$, $y\in Y$.}
\]
Observe that the map $A\mapsto (A^{\ast})^{-1}$ defines an isomorphism between $GL(X)$ and $GL(Y)$. Furthermore if $x\in X$, $y\in Y$ and $A\in GL(X)$, then
\[
 B_{V}(Ax,(A^{\ast})^{-1}y)=B_{V}(x,y).
\]
Therefore, we can define a map $GL(X)\times G(U) \hookrightarrow G(V)$ 
  that identifies $GL(X)\times G(U)$ with the subgroup of $G(V)$ that preserves the direct sum decomposition $V=X\oplus Y \oplus U$. 

\vskip 5pt

Consider the polarization $V\otimes_{D} W^{\ast}=(X\otimes W^{\ast}\oplus U\otimes F^{\ast}) \bigoplus (Y\otimes W^{\ast}\oplus U\otimes E^{\ast})$.
Then as a vector space
\begin{equation}
 L^{2}(X\otimes W^{\ast}\oplus U\otimes F^{\ast})\cong L^{2}(Hom_{D}(W,X)) \otimes L^{2}(Hom_{D}(E,U)). \label{eq:mixedmodel}
\end{equation}
Let $(\omega_{U}, L^{2}(Hom_{D}(E,U)))$ be the Schr\"odinger model of the oscillator representation associated to the metaplectic group $\widetilde{Sp}(U\otimes_{D}W^{\ast})$. We will identify the space appearing on the right hand side of equation (\ref{eq:mixedmodel}) with the space of $L^{2}$ functions from $Hom_{D}(W,X)$ to $L^{2}(Hom_{D}(E,U))$. This is the so called \emph{mixed model} of the oscillator representation.

\vskip 5pt

 The action of $G(W)\times GL(X)\times G(U)$ on this model can be described in the following way: If $T\in Hom_{D}(W,X)$ and $S\in Hom_{D}(E,U)$, then 
\begin{eqnarray}
 g\cdot \phi (T)(S) & = & [\omega_{U}(g)\phi( Tg)](S) \qquad \mbox{$\forall g\in G(W)$}  \label{eq:mixedmodel1}\\
h\cdot \phi (T)(S) & = & \phi(T)(h^{-1}S) \qquad \mbox{$\forall h \in G(U)$}  \label{eq:mixedmodel2} \\
A\cdot \phi (T)(S) & = & |\mbox{$\det_{X}(A)$}|^{\dim{W}/2}\phi(A^{-1}T)(S) \qquad \mbox{$\forall A\in GL(X)$}. \label{eq:mixedmodel3}
\end{eqnarray}
\vskip 5pt

 We now want to describe this space as an induced representation. To do this, observe that the set of invertible elements in $Hom_{F}(W,X)$ forms a single orbit under the natural action of $G(W)\times GL(X)$. Furthermore this orbit is open and dense, and its complement has measure $0$. Fix $T_{0}\in Hom_{F}(W,X)$ invertible, and define a $\varepsilon_{W}$-Hermitian form $B_{T_{0}}$ on $X$, by setting
\[
 B_{T_{0}}(x_{1},x_{2})=B_{W}(T_{0}^{-1}x_{1},T_{0}^{-1}x_{2}).
\]
The group that preserves this form is precisely
\[
 G(X,B_{T_{0}})=\{T_{0}gT_{0}^{-1} \, | \, g\in G(W)\} \subset GL(X).
\]
Let
\[
 (G(W)\times GL(X))_{T_{0}}=\{(g,T_{0}gT_{0}^{-1}) \, | \, g\in G(W)  \} \cong G(W)
\]
be the stabilizer of $T_{0}$ in $G(W)\times GL(X)$. Then, according to equations (\ref{eq:mixedmodel1})--(\ref{eq:mixedmodel3}),
\begin{eqnarray*}
 L^{2}(W\otimes X)\otimes L^{2}(Hom_{D}(E,U)) & \cong & \Ind_{(G(W)\times GL(X))_{T_{0}}}^{G(W)\times GL(X)} L^{2}(Hom_{D}(E,U)) \\
& \cong & \Ind_{G(W) \times G(X,B_{T_{0}})}^{G(W)\times GL(X)}\Ind^{G(W) \times G(X,B_{T_{0}})}_{(G(W)\times GL(X))_{T_{0}}} L^{2}(Hom_{D}(E,U)). 
\end{eqnarray*}
Here $(G(W)\times GL(X))_{T_{0}}$ is acting on $L^{2}(Hom_{D}(E,U))$ by taking projection into the first component, and then using the oscillator representation to define an action of $G(W)$ on $L^{2}(Hom_{D}(E,U))$. But this representation is equivalent to taking projection into the second component and using the Schr\"oridnger model of the oscillator representation of $\widetilde{Sp}(U\otimes X^{\ast})$ (recall that $X$ is equipped with the form $B_{T_0}$)  to define an action of $G(X,B_{T_{0}})$ on $L^{2}(Hom_{D}(T_{0}(E),U))$. Therefore
\begin{eqnarray}
 &L^{2}(W\otimes X)\otimes L^{2}(Hom_{D}(E,U)) \nonumber \\
  \cong & \Ind_{G(W) \times G(X,B_{T_{0}})}^{G(W)\times GL(X)}\Ind^{G(W) \times G(X,B_{T_{0}})}_{(G(W)\times GL(X))_{T_{0}}}  L^{2}(Hom_{D}(T_{0}(E),U))\nonumber \\
 \cong & \Ind_{G(W) \times G(X,B_{T_{0}})}^{G(W)\times GL(X)} \int_{\widehat{G}(W)} \pi^{\ast} \otimes (\pi^{T_{0}}  \otimes L^{2}(Hom_{D}(T_{0}(E),U)))\, d\mu_{G(W)}(\pi) \nonumber \\
 \cong & \int_{\widehat{G}(W)}  \pi^{\ast} \otimes \Ind_{G(X,B_{T_{0}})}^{GL(X)} \pi^{T_{0}}  \otimes L^{2}(Hom_{D}(T_{0}(E),U)) \, d\mu_{G(W)}(\pi). \label{eq:plancherelmeasure}
\end{eqnarray}
Here $\pi^{\ast}$ is the contragredient representation of $\pi$,  $\pi^{T_{0}}$ is the representation of $G(X,B_{T_{0}})$ given by $\pi^{T_{0}}(g)=\pi(T_{0}^{-1}gT_{0})$, for all $g\in G(X,B_{T_{0}})$, and $\mu_{G(W)}$ is the Plancherel measure of $G(W)$. Note that the multiplicity space of $\pi^{\ast}$ in (\ref{eq:plancherelmeasure}) is nonzero for each $\pi$ in the support of 
$\mu_{G(W)}$, i.e. as a representation of $G(W)$, $\Pi$ is weakly equivalent to the regular representation $L^2(G(W))$.
\vskip 5pt

Comparing (\ref{abstract}) with (\ref{eq:plancherelmeasure}), we obtain:
\vskip 5pt

\begin{proposition}
If $(G(W), G(V))$ is in the stable range, with $G(W)$ the smaller group, then in equations (\ref{abstract}) and (\ref{eq:L2(G(Vd)|G(V))decomposition}),
 \[ A = \widehat{G}(W) \quad \text{and} \quad \mu_{\theta} = \mu_{G(W)}. \]
\end{proposition}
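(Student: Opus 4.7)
The statement is essentially the consistency of the two decompositions of $\Pi|_{G(W)\times G(V)}$ that have just been derived: the abstract one in (\ref{abstract}) coming from the general theory of the theta correspondence, and the explicit mixed-model one in (\ref{eq:plancherelmeasure}). My plan is to extract the $G(W)$-spectral data from each side and invoke uniqueness of direct integral decompositions.

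First I would read off the $G(W)$-spectrum of $\Pi$ from (\ref{eq:plancherelmeasure}). The mixed and Schr\"odinger models are unitarily equivalent as $G(W)\times G(V)$-representations, so both formulas describe the same $\Pi$. The right-hand side of (\ref{eq:plancherelmeasure}) displays $\Pi$ as a direct integral over $\widehat{G}(W)$ against the Plancherel measure $\mu_{G(W)}$, with $\pi^{\ast}$-isotypic multiplicity space
\[
\Ind_{G(X,B_{T_{0}})}^{GL(X)} \pi^{T_{0}} \otimes L^{2}(Hom_{D}(T_{0}(E),U)),
\]
which is nonzero for every $\pi$ in the Plancherel support (as noted immediately after (\ref{eq:plancherelmeasure})). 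Since $\pi\mapsto \pi^{\ast}$ preserves the Plancherel measure class on $\widehat{G}(W)$, this precisely means that $\Pi|_{G(W)}$ is weakly equivalent to the regular representation $L^{2}(G(W))$, with spectral support all of $\widehat{G}(W)$ and spectral measure class $[\mu_{G(W)}]$.

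On the other hand, (\ref{abstract}) displays $\Pi|_{G(W)\times G(V)}$ as $\int_{A}\pi\otimes\theta(\pi)\,d\mu_{\theta}(\pi)$ with $\theta$ injective and each $\theta(\pi)$ a single irreducible representation of $G(V)$. Restricting to the $G(W)$-action, the spectral support is exactly $A$ and the spectral measure class is $[\mu_{\theta}]$. By uniqueness of the direct integral decomposition of a unitary representation of the type I group $G(W)$, it follows that $A=\widehat{G}(W)$ and $[\mu_{\theta}]=[\mu_{G(W)}]$. With the natural normalizations of the two Plancherel-type measures coming from matching up the multiplicity spaces in (\ref{abstract}) and (\ref{eq:plancherelmeasure}), this identification of measure classes becomes the equality $\mu_{\theta}=\mu_{G(W)}$.

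The only real subtlety is bookkeeping: one must keep careful track of the involution $\pi\mapsto \pi^{\ast}$ when matching the two formulas, and one uses that $G(W)$ (a real or $p$-adic reductive group, or a metaplectic cover in the exceptional case noted earlier) is of type I so that direct integral decompositions are essentially unique. Once this is in place, no new computation is required; the proposition is the direct comparison of (\ref{abstract}) with (\ref{eq:plancherelmeasure}).
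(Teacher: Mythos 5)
Your proposal is correct and follows essentially the same route as the paper: derive the explicit mixed-model decomposition (\ref{eq:plancherelmeasure}), observe that the multiplicity space of $\pi^{\ast}$ is nonzero for every $\pi$ in the Plancherel support so that $\Pi|_{G(W)}$ is weakly equivalent to $L^2(G(W))$, and then compare with the abstract decomposition (\ref{abstract}) using uniqueness of the direct integral for the type I group $G(W)$. Your remarks on the involution $\pi\mapsto\pi^{\ast}$ and the type I hypothesis make explicit what the paper leaves implicit in its one-line ``comparing'' step, but the underlying argument is identical.
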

\vskip 10pt

\subsection{\bf The Bessel-Plancherel theorem.}
Finally, we want to identify the multiplicity space $W_{\chi_Y}(\pi)$ in 
(\ref{eq:L2(G(Vd)|G(V))decomposition}). Note that this is purely an issue about representations of $G(W)$; a priori, it has nothing to do with theta correspondence. What we know is summarized in the following theorem.
\vskip 5pt

\begin{theorem}[Bessel-Plancherel theorem]  \label{T:bessel}
Let $(W,B_{W})$ be an $\varepsilon_{W}$-Hermitian $D$-module, and assume that $W$ has a complete polarization $W=E\oplus F$, where $E$, $F$ are totally isotropic complementary subspaces. Let $P=\{p\in G(W)\, | \, p \cdot E=E\}$ be a Siegel parabolic subgroup of $G$, and let $P=MN$ be its Langlands decomposition. Given $\chi\in \hat{N}$, let $\Orb_{\chi}$ be its orbit under the action of $M$, and let $M_{\chi}$ be the stabilizer of $\chi$ in $M$. Then
\begin{enumerate}
 \item For $\mu_{G(W)}$-almost all tempered representation $\pi$ of $G(W)$,
\[
 \pi|_{P}  \cong \bigoplus_{\Orb_{\chi}\in \Omega_{W}}\Ind_{M_{\chi}N}^{P}W_{\chi}(\pi).
\]
Here $\mu_{G(W)}$ is the Plancherel measure of $G(W)$,  $\Omega_{W}=\{\Orb_{\chi}\in \Omega\, | \, \mbox{$\Orb_{\chi}$ is open in $\hat{N}$}\}$, and $W_{\chi}(\pi)$ is some $M_{\chi}N$-module such that the action of $N$ is given by the character $\chi$.
\vskip 5pt

\item If $\Orb_{\chi}\in \Omega_{W}$, then there is an isomorphism of $M_{\chi} \times G(W)$-modules:
\begin{equation}
 L^{2}(N\backslash G(W); \chi) \cong \int_{\widehat{G}(W)} W_{\chi}(\pi) \otimes \pi \, d\mu_{G(W)}(\pi). \label{eq:BesselPlancherel}
\end{equation}
where $W_{\chi}(\pi)$ is the same space appearing in (1).
\vskip 5pt

\item   If  $\dim_{D}(W)=2$, then for $\Orb_{\chi}\in \Omega_{W}$,  $\dim W_{\chi}(\pi) < \infty$ and
\[
 W_{\chi}(\pi)\cong Wh_{\chi}(\pi)=\{\lambda:\pi^{\infty}\longrightarrow \C\, | \, \mbox{$\lambda(\pi(n)v)=\chi(n)\lambda(v)$ for all $n\in N$}\}
\]
as an $M_{\chi}N$-module. Here $\pi^{\infty}$ stands for the set of $C^{\infty}$ vectors of $\pi$.
  \vskip 5pt

  \item If $\k$ is Archimedean, and $M_{\chi}$ is compact, then
  \[  W_{\chi}(\pi) \subset Wh_{\chi}(\pi) \]
  as a dense subspace, and for any irreducible representation $\tau$ of $M_{\chi}$, one has an equality of $\tau$-isotypic parts:
  \[  W_{\chi}(\pi)[\tau] =  Wh_{\chi}(\pi)[\tau]. \]
Moreover, this space  is finite dimensional.
 \end{enumerate}
\end{theorem}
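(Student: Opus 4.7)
The plan is to deduce parts (1) and (2) simultaneously from the Plancherel theorem for $G(W)$ by Fourier analysis along the abelian unipotent radical $N$, and then specialize to prove (3) and (4). Start from the $G(W) \times G(W)$-equivariant Plancherel decomposition
\begin{equation*}
L^{2}(G(W)) \cong \int_{\widehat{G}(W)} \pi^{\ast} \otimes \pi \, d\mu_{G(W)}(\pi).
\end{equation*}
Restricting the left factor to $N$, which is abelian, and decomposing spectrally gives $L^{2}(G(W)) \cong \int_{\widehat{N}} L^{2}(N \backslash G(W); \chi)\, d\chi$ against the Plancherel measure on $\widehat{N}$. Comparing the two descriptions produces
\begin{equation*}
L^{2}(N \backslash G(W); \chi) \cong \int_{\widehat{G}(W)} W_{\chi}(\pi) \otimes \pi \, d\mu_{G(W)}(\pi),
\end{equation*}
which is (2), with $W_{\chi}(\pi)$ interpreted as the $(N,\chi)$-spectral fiber of $\pi^{\ast}$ (up to the measure-preserving involution $\pi \leftrightarrow \pi^{\ast}$). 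Part (1) is then extracted by applying Mackey's theorem orbit-by-orbit on $\widehat{N}$: on each open $M$-orbit $\Orb_{\chi}$ the $N$-spectrum of $\pi$ reassembles as $\Ind_{M_{\chi} N}^{P} W_{\chi}(\pi)$, and summing over the orbits in $\Omega_{W}$ yields the direct sum decomposition.

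The main obstacle in this step is showing that for $\mu_{G(W)}$-almost every tempered $\pi$, the $N$-spectral support of $\pi$ is concentrated on the \emph{open} $M$-orbits, so that lower-dimensional orbits contribute measure zero. In the Archimedean case this should follow from Harish-Chandra's asymptotic expansions of tempered matrix coefficients along $N$, together with wavefront-set considerations forcing the leading asymptotic term to be supported on the generic Jacquet characters; in the $p$-adic case it amounts to controlling the Bernstein-Zelevinsky filtration of tempered representations.

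For part (3), when $\dim_{D} W = 2$ the Levi $M \cong GL_{1}(D)$ has semisimple rank at most one, and the Whittaker integral
\begin{equation*}
\lambda_{v}(g) = \int_{N} \langle \pi(ng) v, v_{0} \rangle \overline{\chi(n)}\, dn, \qquad v \in \pi^{\infty},
\end{equation*}
converges absolutely for tempered $\pi$ and defines a $G(W)$-equivariant map $\pi^{\infty} \to Wh_{\chi}(\pi)^{\ast}$. Identifying its image with $W_{\chi}(\pi)$ and invoking the classical uniqueness of the rank-one Whittaker model (Shalika, Jacquet-Shalika, Wallach) yields both the isomorphism $W_{\chi}(\pi) \cong Wh_{\chi}(\pi)$ and the finite-dimensionality assertion.

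For part (4), when $\k$ is Archimedean and $M_{\chi}$ is compact, I pass to smooth vectors in $W_{\chi}(\pi)$: Casselman-Wallach embeds the resulting space densely into $Wh_{\chi}(\pi)$ via the evaluation-at-identity map on the smoothed equivariant functionals. Since $M_{\chi}$ is compact, every irreducible $\tau \in \widehat{M}_{\chi}$ is finite-dimensional and the projection onto the $\tau$-isotypic component is continuous; density therefore forces the equality $W_{\chi}(\pi)[\tau] = Wh_{\chi}(\pi)[\tau]$. Finite-dimensionality of $Wh_{\chi}(\pi)[\tau]$ follows from Kostant-Lynch-type bounds on $M_{\chi}$-equivariant Whittaker functionals on an admissible Harish-Chandra module, since compactness of $M_{\chi}$ makes its action on $\pi^{\infty}$ completely reducible with finite multiplicities in each $\tau$.
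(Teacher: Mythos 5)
Your overall architecture for parts (1) and (2) resembles the paper's, but the logical direction is reversed in a way that creates a gap. The paper takes part (2) as the foundational input---it cites the Whittaker-Plancherel argument of Sakellaridis--Venkatesh \cite[\S 6.3]{Sakellaridis-Venkatesh} (and, in the Archimedean case, Gomez's thesis) for the fact that the spectral measure of $L^{2}(N\backslash G(W);\chi)$ lies in the class of $\mu_{G(W)}$---and \emph{then} deduces part (1) by comparing the two decompositions of $L^{2}(G(W))|_{P\times G(W)}$. You, by contrast, claim to \emph{derive} part (2) by ``comparing the two descriptions'' $\int_{\widehat{G}(W)}\pi^{\ast}\otimes\pi\,d\mu_{G(W)}$ and $\int_{\widehat{N}}L^{2}(N\backslash G(W);\chi)\,d\chi$. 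This comparison is not a valid deduction: those are disintegrations of the same Hilbert space over two \emph{different} parameter spaces, and equating them only tells you that the joint $N\times G(W)$-spectral measure on $\widehat{N}\times\widehat{G}(W)$ has the right marginals. It says nothing about the conditional measure of $\pi$ given a fixed $\chi$, and identifying that conditional measure class with $\mu_{G(W)}$ is precisely the nontrivial content of part (2). You have therefore asserted the key step rather than proved it.

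Your ``main obstacle'' paragraph is also misdirected. You propose to show, by asymptotic expansions and wavefront-set arguments (resp.\ Bernstein--Zelevinsky filtrations), that for a.e.\ tempered $\pi$ the $N$-spectrum concentrates on open $M$-orbits, so that part (1) can be assembled orbit by orbit. In the paper's approach this is not a separate obstacle at all: the open-orbit concentration is automatic at the level of $L^{2}(G(W))$ (Lebesgue measure on $\widehat{N}$ is carried by the union of open orbits), and the concentration for almost every individual $\pi$ then \emph{follows} from matching the two decompositions of $L^{2}(G(W))|_{P\times G(W)}$---you do not need, and the paper does not invoke, wavefront-set machinery. Attempting to establish the concentration $\pi$-by-$\pi$ first is a harder route, and for part (1) you only need it almost everywhere anyway, which the comparison gives for free once part (2) is in hand.

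For parts (3) and (4) the paper simply cites Wallach \cite{w:vol2} and Delorme, Sakellaridis--Venkatesh, Tang \cite{Delorme, Sakellaridis-Venkatesh, U-Liang} for (3), and Gomez--Wallach \cite{raul-nolan} for (4). Your sketches (a Whittaker integral plus rank-one uniqueness for (3); Casselman--Wallach smoothing, density, and compactness of $M_{\chi}$ for (4)) point in a reasonable direction but are too compressed to stand as proofs---in particular the absolute convergence of your Whittaker integral for general tempered $\pi$ and the density of the $L^{2}$-model inside $Wh_{\chi}(\pi)$ are themselves delicate facts that these references are devoted to establishing.

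In short: fix the logical order so that part (2) is an input (cite or reproduce the Whittaker-Plancherel argument), deduce part (1) from it exactly as you (and the paper) propose via the double decomposition of $L^{2}(G(W))$, and drop the wavefront-set detour; then parts (3) and (4) should be handled by reference to the literature rather than by sketch.
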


\begin{proof}
 Part 2 follows from an argument analogous to the proof of the Whittaker-Plancherel measure given by Sakellaridis-Venkatesh \cite[\S 6.3]{Sakellaridis-Venkatesh}.  For the proof of part 1 observe that, by Harish-Chandra Plancherel theorem 
\[
 L^{2}(G(W))|_{P\times G(W)}  = \int_{\widehat{G}(W)} \pi^{\ast}|_{P}\otimes \pi \, d\mu_{G(W)}(\pi).
\]
On the other hand
\begin{align}
 L^{2}(G(W))|_{P\times G(W)} 
 &=  \bigoplus_{\Orb_{Y}\in \Omega_{W}} \Ind_{M_{\chi_{Y}}N}^{P} L^{2}(N\backslash G(W); \chi) \notag \\
&=  \bigoplus_{\Orb_{Y}\in \Omega_{W}} \Ind_{M_{\chi_{Y}}N}^{P} \int_{\widehat{G}(W)} W_{\chi_{Y}}(\pi) \otimes \pi \, d\mu_{G(W)}(\pi) \notag \\
&=  \int_{\widehat{G}(W)} \left[\bigoplus_{\Orb_{Y}\in \Omega_{W}} \Ind_{M_{\chi_{Y}}N}^{P}  W_{\chi_{Y}}(\pi)\right] \otimes \pi \, d\mu_{G(W)}(\pi). \notag
\end{align}
Therefore
\[
 \pi^{\ast}|_{P}  \cong \bigoplus_{\Orb_{Y}\in \Omega_{W}}\Ind_{M_{\chi}N}^{P}W_{\chi}(\pi)
\]
for $\mu_{G(W)}$-almost all $\pi$. In the Archimedean case, this result has also been proved in the thesis of the second named author without the $\mu_{G(W)}$-almost all restriction, yielding an alternative proof of part 2 for the Archimedean case.
\vskip 5pt

Part 3 was proved by Wallach in the Archimedean case \cite{w:vol2}, and independently by Delorme, Sakellaridis-Venkatesh and U-Liang Tang in the $p$-adic case \cite{Delorme,  Sakellaridis-Venkatesh, U-Liang}.
\vskip 5pt

Finally, Part 4 was shown by Wallach and the second named author in \cite{raul-nolan}.
\end{proof}

\vskip 5pt

\subsection{\bf Spectral decomposition of generalized Stiefel manifolds}
We may now assemble all the previous results together.  For $\Orb_{Y}\in \Omega_{V}$, 
the space $G(V)_{T_{Y}}\backslash G(V)$ is known as a \emph{generalized Stiefel manifold}. 
From equations (\ref{eq:L2(G(Vd)|G(V))decomposition}) and (\ref{eq:plancherelmeasure}), we deduce:



\begin{theorem}
Suppose that  $G(V)_{T_{Y}}\backslash G(V)$ is a generalized Stiefel manifold.  If, in the notation of equation (\ref{eq:BesselPlancherel})
\[
L^{2}(N\backslash G(W); \chi_{Y}) \cong \int_{\widehat{G}(W)} W_{\chi_{Y}}(\pi) \otimes \pi \, d\mu_{G(W)}(\pi).
\]
then
\[
 L^{2}(G(V)_{T_{Y}}\backslash G(V)) \cong \int_{\widehat{G}(W)} W_{\chi_{Y}}(\pi)\otimes \Theta(\pi) \, d\mu_{G(W)}(\pi).
\]
 \end{theorem}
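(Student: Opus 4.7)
The plan is to assemble the theorem from three ingredients already established in this section: (i) the abstract decomposition of $L^{2}(G(V)_{T_{Y}}\backslash G(V))_{\chi_{Y}}$ arising from the Schr\"odinger-model calculation (the proposition following \eqref{eq:L2(G(Vd)|G(V))decomposition}); (ii) the stable-range proposition obtained via the mixed model; and (iii) the Bessel-Plancherel theorem which names the multiplicity space $W_{\chi_{Y}}(\pi)$.

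First I would invoke the Schr\"odinger-model proposition, yielding
\[
L^{2}(G(V)_{T_{Y}}\backslash G(V))_{\chi_{Y}} \;\cong\; \int_{A \subset \widehat{G}(W)} W_{\chi_{Y}}(\pi)\otimes \Theta(\pi)\, d\mu_{\theta}(\pi)
\]
as $M_{\chi_{Y}}N \times G(V)$-modules. The subscript $\chi_{Y}$ on the left only records the character by which $N$ acts together with the $M_{\chi_{Y}}$-action transferred via the isomorphism $G(V)_{T_{Y}}\backslash G(V)_{[T_{Y}]} \cong M_{\chi_{Y}}$, so forgetting this extra left structure yields the corresponding $G(V)$-module decomposition of $L^{2}(G(V)_{T_{Y}}\backslash G(V))$ itself. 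Because the ``generalized Stiefel manifold'' hypothesis places us in the stable-range context of the preceding subsection, the stable-range proposition then applies and identifies $A = \widehat{G}(W)$ and $\mu_{\theta} = \mu_{G(W)}$, producing the integral against Plancherel measure that appears in the claimed formula.

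The hypothesis of the theorem is in turn nothing other than the Bessel-Plancherel decomposition (Theorem~\ref{T:bessel}(2)) of $L^{2}(N\backslash G(W);\chi_{Y})$, which fixes the labelling of the multiplicity space as $W_{\chi_{Y}}(\pi)$. The one genuinely delicate point---and essentially the only step deserving care---is verifying that this $W_{\chi_{Y}}(\pi)$ is literally the same $M_{\chi_{Y}}N$-module as the one appearing in the Schr\"odinger-model formula above. This matching holds because both spaces are characterized as the $\chi_{Y}$-isotypic part of the $N$-spectrum of the same underlying $G(W)$-representation, and the uniqueness of this disintegration---the very argument used to extract (i) by comparing \eqref{eq:L2Vabstract} with \eqref{eq:L2Vexplicit}---forces the identification. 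Once this is checked, the theorem follows by direct substitution of (ii) and (iii) into (i); no further analytic input is needed, since the genuinely nontrivial content of the argument has been absorbed into those three prior results.
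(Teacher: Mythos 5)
Your proposal is correct and reproduces the paper's argument: the theorem is exactly the conjunction of the $M_{\chi_Y}N\times G(V)$-module decomposition \eqref{eq:L2(G(Vd)|G(V))decomposition} obtained from the Schr\"odinger model, the identification $A=\widehat{G}(W)$, $\mu_\theta=\mu_{G(W)}$ from the stable-range/mixed-model computation \eqref{eq:plancherelmeasure}, and the Bessel--Plancherel labelling of the multiplicity space. Your "delicate point" about matching the two $W_{\chi_Y}(\pi)$'s is already built in by the paper (Theorem~\ref{T:bessel}(2) explicitly states it is the same space as in the Mackey decomposition of $\pi|_P$), so no further argument is needed there, but noting it does no harm.
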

 
In a certain sense, the last pair of equations says that the Plancherel measure of the generalized Stiefel manifold $G(V)_{T_{Y}} \backslash G(V)$ is the pushforward of the Bessel-Plancherel measure of $G(W)$ under the $\theta$-correspondence. 
\vskip 10pt

\subsection{\bf The Sakellaridis-Venkatesh conjecture.}
Using the previous theorem, we can obtain certain examples of the Sakellaridis-Venkatesh conjecture:
\vskip 5pt

\begin{itemize}
\item Taking $D = \k$, $\k \times \k$ or $M_2(\k)$ to be a split $\k$-algebra and 
$W$ to be skew-Hermitian with $\dim_D W = 2$, we obtain the spectral decomposition of 
 \[  H \backslash G = O_{n-1} \backslash O_n,\quad   GL_{n-1} \backslash GL_n, \quad Sp_{2n-2} \backslash Sp_{2n}, \]
in terms of the Bessel-Plancherel (essentially the Whittaker-Plancherel) decomposition for  $G_X = GL_2$, $SL_2$, $\tilde{SL}_2$ or $SO_4$. This establishes the cases listed in Table 1 in Theorem 1.
\vskip 5pt

\item Taking $D$ to be a quadratic field extension of $\k$  or the quaternion division $\k$-algebra, and $W$ to be skew-Hermitian, we obtain the spectral decomposition of
\[  H \backslash G = U_{n-1} \backslash U_n, \quad Sp_{n-1}(D) \backslash Sp_{n}(D) \]
in terms of the Bessel-Plancherel decomposition of $U_2$ and $O_2(D)$.
 This gives non-split version of the examples above.
\end{itemize}

\vskip 5pt

In addition, the multiplicity space $W_{\chi}(\pi) = Wh_{\chi}(\pi)$ should be describable in terms of the space of $H$-invariant (continuous) functionals on  $\Theta(\pi)^{\infty}$. 
Indeed, by the smooth analog of our computation with the Schrodinger model in \S 3.2, one can show that there is a natural isomorphism of $M_{\chi}$-modules:
\[  Wh_{\chi}(\pi) \cong  Hom_H( \Theta^{\infty}(\pi^{\infty}),  \C). \]
Here $\Theta^{\infty}(\pi^{\infty})$ refers to the (big) smooth theta lift of the smooth representation $\pi^{\infty}$, i.e. the representation $\pi^{\infty} \boxtimes \Theta^{\infty}(\pi^{\infty})$ is the maximal $\pi^{\infty}$-isotypic quotient of the smooth model $\Pi^{\infty} = \mathcal{S}(Hom_D(E,V))$
of the oscillator representation $\Pi$.   One can  show using the machinery developed in Bernstein's paper \cite{Be} that for $\mu_{\theta}$-almost all $\pi$, one has the compatibility of $L^2$-theta lifts (considered in this paper) with the smooth  theta lifts:
\[  \Theta(\pi)^{\infty} \cong \Theta^{\infty}(\pi^{\infty}). \]
With this compatibility, one will obtain
\[  W_{\chi}(\pi) \cong  Hom_H(\Theta(\pi)^{\infty}, \C). \]







\subsection{\bf Unstable range.} 
Though we have assumed that $(G(W), G(V))$ is in the stable range from \S 3.3, it is possible to say something when one is not in the stable range as well. Namely, in  \S 3.4, one would take $X$ to be a maximal isotropic space in $V$ (so  $\dim X < \dim W$ here), and consider the mixed model defined on $L^2(Hom_D(W,X)) \otimes L^2(Hom_D(E,U))$.  As an illustration, we note  the result for the case when $W$ is a symplectic space of dimension $2$ and $V$ is a split quadratic space of dimension $3$, so that
\[  G(W) \times G(V) \cong \tilde{SL}_2 \times SO_3 \cong \tilde{SL}_2 \times PGL_2. \]
For a nonzero $Y \in \widehat{N}$, the subgroup $G(V)_{T_Y}$ of $G(V)$ is simply a maximal torus $A_Y$ of $PGL_2$. 
\vskip 5pt

\begin{proposition} \label{P:rank1}
We have
\[  L^2(G(V)_{T_Y} \backslash G(V))  = L^2(A_Y \backslash PGL_2) \cong \int_{\widehat{G(W)}}
 (W_{\chi}(\sigma) \otimes W_{\chi_Y}(\sigma))  \otimes \Theta_{\chi}(\pi) \, d \mu_{G(W)}(\pi). \]
\end{proposition}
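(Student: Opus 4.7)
The proof adapts the derivation in \S 3.2--3.4 to the unstable range. First, the Schr\"odinger-model calculation of \S 3.2 nowhere used the stable-range assumption, so it applies verbatim. For $V$ split of dimension $3$ and $W$ symplectic of dimension $2$, the open $M\times G(V)=\k^\times\times PGL_2$-orbits on $Hom_D(E,V)\cong V$ are the quadrics $\{v\in V:Q(v)\in c(\k^\times)^2\}$ indexed by $c\in\k^\times/(\k^\times)^2$, and the stabilizer in $PGL_2=SO(V)$ of a reference vector $v_Y\in V$ with $Q(v_Y)\neq 0$ is the rank-one torus $A_Y=SO(v_Y^\perp)$. The general formula of \S 3.2 therefore specializes to the abstract decomposition
\[
 L^2(A_Y\backslash PGL_2) \cong \int_{A\subset\widehat{G(W)}} W_{\chi_Y}(\pi)\otimes \Theta(\pi)\, d\mu_\theta(\pi),
\]
leaving $A$, $\mu_\theta$, and $\Theta(\pi)$ to be identified.

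Next, run the mixed-model argument of \S 3.4 with $X\subset V$ a maximal isotropic line and $U$ the resulting $1$-dimensional anisotropic summand, so that $\omega_U$ is the oscillator representation of $\widetilde{Sp}(U\otimes W^*)\cong\widetilde{SL}_2$ on $L^2(\k)$. Since $\dim X=1<2=\dim W$, we lie outside the stable range, and the analysis of \S 3.4 differs at exactly one point: the stabilizer in $G(W)\times GL(X)=\widetilde{SL}_2\times\k^\times$ of the reference $T_0\in Hom_D(W,X)\setminus\{0\}=W^*\setminus\{0\}$ is the Borel subgroup $B^-\subset\widetilde{SL}_2$ stabilizing the line $\k T_0$, embedded as the graph of the diagonal character $B^-\to\k^\times$, rather than a diagonal copy of $G(W)$. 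The mixed-model identification therefore reads
\[
 \Pi \cong \mathrm{Ind}_{B^-\times G(U)}^{\widetilde{SL}_2\times\k^\times\times G(U)} \omega_U\big|_{B^-\times G(U)}.
\]

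The $G(W)$-spectrum of $\Pi$ is then extracted via the Bessel--Plancherel theorem (Theorem~\ref{T:bessel}): by Frobenius reciprocity and the explicit $N^-$-action of $\omega_U$, the multiplicity of a tempered $\pi\in\widehat{G(W)}$ in $\Pi$ equals the Whittaker space $W_\chi(\pi)$, integrated against the Plancherel measure $\mu_{G(W)}$. Here $\chi$ is the character of $N^-$ determined by the fixed additive character of $\k$ and the quadratic form on $U$. Comparing with the abstract formula above forces $A=\widehat{G(W)}$, $\mu_\theta=\mu_{G(W)}$, and $\Theta(\pi)=\Theta_\chi(\pi)$ (the $\chi$-twisted theta lift to $PGL_2$), while the multiplicity space enlarges from $W_{\chi_Y}(\pi)$ to $W_\chi(\pi)\otimes W_{\chi_Y}(\pi)$, yielding the claimed decomposition. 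The main technical obstacle is the bookkeeping in this last step --- tracking the modulus twist introduced by the $GL(X)$-action and verifying that, after taking invariants for the torus inside $B^-$, precisely one character $\chi$ of $N^-$ survives and produces the factor $W_\chi(\pi)$ --- since in the stable range the stabilizer was all of $G(W)$ and no extra Whittaker factor appeared.
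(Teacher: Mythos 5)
The paper does not actually supply a proof of Proposition~\ref{P:rank1} --- it merely states the result as an illustration after the one-sentence hint in \S 3.7 that one should take $X\subset V$ a maximal isotropic and rerun the mixed-model analysis. Your outline is the natural way to fill in that hint, and you have the right two ingredients: (a) the Schr\"odinger-model reduction of \S 3.2 is indeed orbit-theoretic and never invokes stable range, so it yields $L^2(A_Y\backslash PGL_2)\cong\int_A W_{\chi_Y}(\pi)\otimes\Theta(\pi)\,d\mu_\theta(\pi)$ with $A$, $\mu_\theta$, $\Theta(\pi)$ left abstract; and (b) the correct observation that, with $\dim X=1<\dim W=2$, the $G(W)\times GL(X)$-stabilizer of a nonzero $T_0\in Hom_D(W,X)\cong W^*$ is the graph $B^-=\{(b,a(b)):b\in B^-_W\}$ of the diagonal character, so that restricting the induced representation to $G(W)$ lands you on $\mathrm{Ind}_{N^-}^{G(W)}(L^2(\k)|_{N^-})$, and the Whittaker-Plancherel theorem (Theorem~\ref{T:bessel}) then determines the $G(W)$-spectrum.

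There is, however, a genuine gap in the comparison step, and it is not quite the ``modulus twist and torus invariants'' bookkeeping you flag. The issue is that the $G(W)$-multiplicity space of a given $\pi$ in $\Pi$ is \emph{not} $W_\chi(\pi)$: restricting $L^2(\k)$ to $N^-$ gives a direct integral of $\psi_y$ over $y$ in the square class of $c$ (with a two-fold fiber carrying the $G(U)=O_1$-action), so the $G(W)$-multiplicity is $W_\chi(\pi)\otimes L^2(\k^\times)$ as a Hilbert space, which is infinite-dimensional whenever $W_\chi(\pi)\ne 0$. That infinite-dimensional space is exactly the abstract $G(V)$-module $\Theta(\pi)$ from step (a). What the proposition asserts --- and what you need to justify --- is the factorization $\Theta(\pi)\cong W_\chi(\pi)\otimes\Theta_\chi(\pi)$ with $\Theta_\chi(\pi)$ an irreducible $PGL_2$-representation. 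This is Howe duality for the unstable-range dual pair $\widetilde{SL}_2\times PGL_2$, and it does not follow from the mixed-model computation alone: the mixed model only sees the $G(W)\times GL(X)\times G(U)$-action, not the full $G(V)$-action, so it cannot by itself show that the multiplicity space is a single irreducible repeated $\dim W_\chi(\pi)$ times. You need to invoke the known structure of the Shimura-type theta correspondence (or an argument that the $G(V)$-commutant in $\Pi_\pi$ is one-dimensional) to close the argument. Once that external input is supplied, the rest of your comparison --- $\mu_\theta=\mu_{G(W)}$ and the enlargement of the multiplicity to $W_\chi(\pi)\otimes W_{\chi_Y}(\pi)$ --- goes through as you describe.
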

\vskip 5pt

We record the following corollary which is needed in the second half of this paper:
\vskip 5pt

\begin{corollary} \label{C:rank1}
 The unitary representation $L^2(\mathfrak{sl}_2)$ associated to
the adjoint action of $PGL_2$ on its Lie algebra $\mathfrak{sl}_2$ is
  weakly equivalent to the regular representation $L^2(PGL_2)$.
  \end{corollary}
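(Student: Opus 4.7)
The plan is to combine the orbit decomposition of $\mathfrak{sl}_2$ under the adjoint $PGL_2$-action with Proposition \ref{P:rank1}. The $PGL_2$-invariant map $\det : \mathfrak{sl}_2 \to \k$ is a $PGL_2$-equivariant fibration on the regular semisimple locus (dense and of full measure in $\mathfrak{sl}_2$), with each fiber over $d \in \k^\times$ a single orbit isomorphic to $PGL_2 / T_d$, where $T_d$ is a maximal torus whose isomorphism class depends only on the square class of $-d$. This yields a $PGL_2$-equivariant decomposition
\[
L^2(\mathfrak{sl}_2) \cong \bigoplus_{[c] \in \k^\times/(\k^\times)^2} \mathcal{H}_{[c]} \otimes L^2(A_{Y_c} \backslash PGL_2),
\]
where $A_{Y_c}$ is a representative torus in each conjugacy class and $\mathcal{H}_{[c]}$ is an auxiliary Hilbert space encoding the radial parameter. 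Since weak equivalence is insensitive to tensoring with auxiliary Hilbert spaces, it suffices to show that $\bigoplus_{[c]} L^2(A_{Y_c} \backslash PGL_2)$ is weakly equivalent to $L^2(PGL_2)$.

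Proposition \ref{P:rank1} then gives, for each square class,
\[
L^2(A_{Y_c} \backslash PGL_2) \cong \int_{\widehat{\tilde{SL}_2}} \bigl(W_{\chi}(\sigma) \otimes W_{\chi_{Y_c}}(\sigma)\bigr) \otimes \Theta_{\chi}(\sigma) \, d\mu_{\tilde{SL}_2}(\sigma).
\]
Since the Plancherel measure $\mu_{\tilde{SL}_2}$ is supported on tempered representations and theta lifts of tempered representations are tempered, the support of $L^2(\mathfrak{sl}_2)$ is contained in $\widehat{PGL_2}^{\mathrm{temp}}$, which is precisely the support of $L^2(PGL_2)$. For the reverse weak containment, I would invoke the Shimura--Waldspurger correspondence for the pair $(\tilde{SL}_2, SO_3 = PGL_2)$: every tempered $\pi \in \widehat{PGL_2}$ is of the form $\Theta_\chi(\sigma)$ for some tempered genuine $\sigma \in \widehat{\tilde{SL}_2}$ and some generic character $\chi$; and for each such $\sigma$ at least one square class $[c]$ yields a nonzero Bessel coefficient $W_{\chi_{Y_c}}(\sigma)$, so that summing over $[c]$ covers all of $\widehat{PGL_2}^{\mathrm{temp}}$.

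The main obstacle is this final coverage step, namely confirming that the union over $[c]$ of the supports of the individual $L^2(A_{Y_c} \backslash PGL_2)$ exhausts $\widehat{PGL_2}^{\mathrm{temp}}$. This rests on two classical but nontrivial inputs: the surjectivity of the $\tilde{SL}_2 \leftrightarrow PGL_2$ theta correspondence on tempered representations, and the bookkeeping of Whittaker data on $\tilde{SL}_2$ guaranteeing that every such $\sigma$ is $\chi_{Y_c}$-generic for at least one square class $[c]$. Once these are assembled, the two-sided weak containment gives the desired weak equivalence.
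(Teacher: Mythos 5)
Your argument is essentially the paper's proof: orbit decomposition of $L^2(\mathfrak{sl}_2)$ into $\bigoplus L^2(A\backslash PGL_2)$ (modulo a spectator Hilbert-space factor that does not affect weak equivalence), an application of Proposition \ref{P:rank1}, and a spectral comparison via the Shimura--Waldspurger correspondence for $\tilde{SL}_2 \times PGL_2$. The ``main obstacle'' you flag at the end is precisely what the paper also asserts without detailed proof --- namely that $\Theta_\chi$ is a bijection from the $\chi$-generic tempered genuine dual of $\tilde{SL}_2$ onto $\widehat{PGL_2}$ pushing $\mu_{\tilde{SL}_2}$ forward to $\mu_{PGL_2}$, and that the multiplicity space $M_\chi(\Theta_\chi^{-1}(\sigma))$ is nonzero for every $\sigma$ --- so you and the paper close the same gap by the same appeal to known facts about the Waldspurger correspondence.
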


\begin{proof}
Since the union of strongly regular semisimple classes are open dense in $\mathfrak{sl}_2$, we see that $L^2(\mathfrak{sl}_2)$ is weakly equivalent to $\bigoplus_A  L^2(A \backslash PGL_2)$, where the sum runs over conjugacy classes of maximal tori $A$ in $PGL_2$. 
Applying Proposition \ref{P:rank1}, one deduces that
 \[ \bigoplus_A  L^2(A \backslash PGL_2) \cong \int_{\widehat{G(W)}}  M_{\chi}(\pi) \otimes \Theta_{\chi}(\pi) \, d\mu_{G(W)}(\pi) \]
 with
\[  M_{\chi}(\pi) = W_{\chi}(\pi) \otimes \left( \bigoplus_A W_{\chi_A}(\pi) \right). \] 
One can show that the theta correspondence with respect to $\chi$ induces a bijection
\[  \Theta_{\chi}:  \{ \pi \in \widehat{G(W)}: W_{\chi}(\pi) \ne 0 \} \longleftrightarrow
\widehat{G(V)}. \]
Moreover, one can write down this bijection explicitly (in terms of the usual coordinates on the unitary duals of $\widetilde{SL}_2$ and $PGL_2$). From this description, one sees that
\[  (\Theta_{\chi})_*(\mu_{G(W)})  = \mu_{G(V)}. \]
This shows that
\[ \int_{\widehat{G(W)}}  M_{\chi}(\pi) \otimes \Theta_{\chi}(\pi) \, d\mu_{G(W)}(\pi) \cong
\int_{\widehat{G(V)}} M_{\chi}(\Theta_{\chi}^{-1}(\sigma)) \otimes \sigma \, d\mu_{G(V)}(\sigma), \]
with $M_{\chi}(\Theta_{\chi}^{-1}(\sigma)) \ne 0$. This proves the corollary.
\end{proof} 
\vskip 15pt

\section{\bf Exceptional Structures and Groups}
The argument of the previous section can be adapted to various dual pairs in exceptional groups, thus giving rise to more exotic examples of the Sakellaridis-Venkatesh conjecture. In particular,   we shall show that the spectral decomposition of $L^2(X) = L^2(H\backslash G)$ can obtained from that of $L^2(G_X)$, with $X$ and $G_X$ given in the following table.
\vskip 15pt
\begin{table}[ht]
\label{table2}
\begin{center}
\begin{tabular}{|c|c|c|c|c|c|c|c|c|} 
\hline 
 
$X$ &$SO_3 \backslash SL_3$ & $SL_3\backslash G_2$ &  $(J, \psi) \backslash G_2$ 
& $Sp_6 \backslash SL_6$ & $G_2 \backslash Spin_7$ & $G_2 \backslash Spin_8$ & $Spin_9 \backslash F_4$ & $F_4 \backslash E_6$ \\
 \hline 
 $G_X$ &  $\widetilde{SL}_3$ & $SL_2$ &   $PGL_3$ &   $SL_3$ &  $ SL_2$& $SL_2^3/\Delta \mu_2$ & $PGL_2$ & $SL_3$  \\
 \hline
\end{tabular}
\end{center}
\caption{}
\end{table}

\noindent The unexplained notation will be explained in due course.  
Comparing with the tables in \cite[\S 15 and \S 16]{Sakellaridis-Venkatesh}, we see that these exceptional examples,  together with the classical examples treated earlier,  verify the conjecture of Sakallaridis-Venkatesh for almost all  the rank $1$ spherical varieties (with certain desirable properties), and also some rank $2$ or rank $3$ ones.
\vskip 5pt

Though the proof will be similar in spirit to that of the previous section, we shall need to deal with the geometry of various exceptional groups, and this is ultimately based on the geometry of the (split) octonion algebra $\mathbb{O}$ and the exceptional Jordan algebra $J(\mathbb{O})$. Thus we need to recall some basic properties of $\mathbb{O}$ and its automorphism group. A good reference for the material in this section is the book \cite{KMRT}. One may also consult \cite{MS} and \cite{We}.
\vskip 10pt

\subsection{\bf Octonions and $G_2$.}
Let $\k$ be a local field of characteristic zero and let $\mathbb{O}$ denote the (8-dimensional) split octonion algebra over $\k$. The octonion algebra $\mathbb{O}$ is non-commutative and non-associative. Like the quaternion algebra, it is endowed with a conjugation $x \mapsto \bar{x}$ with an associated trace map $Tr(x) = x + \bar{x}$ and an associated norm map $N(x) = x \cdot \bar{x}$. 
It is a composition algebra, in the sense that $N(x \cdot y) = N(x) \cdot N(y)$. 
\vskip 5pt

A useful model for $\mathbb{O}$ is the so-called Zorn's model, which consists of $2 \times 2$-matrices 
\[  \left( \begin{array}{cc} 
a & v \\
v' & b \end{array} \right), \quad \text{with $a,b \in \k$, $v \in V \cong k^3$ and $v' \in V'$,}  \]
with $V$ a 3-dimensional $\k$-vector space with dual $V'$. Note that there are natural isomorphisms
\[  \wedge^2 V \cong V' \quad \text{and} \quad \wedge^2 V' \cong V,\]
and let $\langle -, -\rangle$ denote the natural pairing on $V' \times V$. 
The multiplication on $\mathbb{O}$ is then defined by
\[  
 \left( \begin{array}{cc} 
a & v \\
v' & b \end{array} \right)   \cdot  \left( \begin{array}{cc} 
c & w \\
w' & d \end{array} \right)
=  
 \left( \begin{array}{cc} 
ac + \langle w',v \rangle   & aw + dv + v' \wedge w'  \\
cv' + bw' + v \wedge w  & bd + \langle v', w \rangle  \end{array} \right)
\]
The conjugation map is
\[   \left( \begin{array}{cc} 
a & v \\
v' & b \end{array} \right) \mapsto  \left( \begin{array}{cc} 
b & -v \\
-v' & a \end{array} \right) \]
so that
\[  Tr  \left( \begin{array}{cc} 
a & v \\
v' & b \end{array} \right) = a+b \quad \text{and} \quad N  \left( \begin{array}{cc} 
a & v \\
v' & b \end{array} \right) = ab - \langle v',v \rangle. \]
Any non-central element $x \in \mathbb{O}$ satisfies the quadratic polynomial 
$x^2 - Tr(x) \cdot x + N(x) = 0$.  Thus, a non-central element $x \in \mathbb{O}$ generates a quadratic $\k$-subalgebra  described by this quadratic polynomial. If this quadratic polynomial is separable, $x$ is said to have rank $2$. Otherwise, $x$ is said to have rank $1$.  

\vskip 5pt
The automorphism group of the algebra $\mathbb{O}$ is the split exceptional group of type $G_2$. 
 The group $G_2$  contains the subgroup $SL(V) \cong SL_3$ which fixes the diagonal elements in Zorn's model, and acts on $V$ and $V'$ naturally. Clearly, $G_2$ fixes the identity element $1 \in \mathbb{O}$, so that it acts on the subspace $\mathbb{O}_0$ of trace zero elements. The following proposition summarizes various properties of the action of $G_2$ on $\mathbb{O}_0$.
\vskip 5pt

\begin{proposition}
(i) Fix $a \in \k^{\times}$, and let $\Omega_a$ denote the subset of $x \in \mathbb{O}_0$ with $N(x) = a$, then $\Omega_a$ is nonempty and $G_2$ acts transitively on $\Omega_a$ with stabilizer isomorphic to $SU_3(E_a)$, where $E_a = \k[x]/(x^2-a)$. 
\vskip 5pt

\noindent (ii) The automorphism group $G_2$ acts transitively on the set $\Omega_0$ of trace zero, rank 1 elements. For $x \in \Omega_0$, the stabilizer of the line $\k \cdot x$ is a maximal parabolic subgroup 
$Q = L \cdot U$ with Levi factor $L \cong GL_2$ and unipotent radical $U$ a 3-step unipotent group. 

\end{proposition}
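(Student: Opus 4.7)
My plan is to combine Zorn's model of $\mathbb{O}$ with the Skolem--Noether principle for composition subalgebras (for part (i)) and standard root-system bookkeeping for $G_2$ (for part (ii)).

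\textbf{Part (i).} Nonemptiness of $\Omega_a$ is by exhibition: in Zorn coordinates, the element $\begin{pmatrix} 0 & v \\ v' & 0 \end{pmatrix}$ with $\langle v', v \rangle = -a$ has trace zero and norm $a$. Each $x \in \Omega_a$ has minimal polynomial $t^2 + a$, so $E := \k[x] \subset \mathbb{O}$ is a quadratic étale subalgebra of the isomorphism class specified in the statement. For transitivity I invoke the Skolem--Noether theorem for composition subalgebras (cf.\ \cite{KMRT}): any isomorphism between two composition subalgebras of $\mathbb{O}$ extends to an automorphism of $\mathbb{O}$, which forces $G_2$ to act transitively on $\Omega_a$. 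To identify the stabilizer, the Cayley--Dickson structure identifies the norm-orthogonal complement $E^{\perp} \subset \mathbb{O}$ with a rank-three right $E$-module endowed with a nondegenerate $E$-valued Hermitian form extending the restriction of the norm $N$. The pointwise stabilizer of $E$ in $G_2$ must preserve this Hermitian structure, and the further requirement of being an algebra automorphism (equivalently, of preserving an $E$-valued volume form on $E^\perp$) forces the unitary determinant to be trivial, yielding $SU_3(E_a)$. The dimension count $\dim G_2 - \dim SU_3(E_a) = 14 - 8 = 6 = \dim \Omega_a$ is consistent with the claim.

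\textbf{Part (ii).} For $x \in \mathbb{O}_0$, the quadratic polynomial is $t^2 + N(x)$, which is inseparable iff $N(x) = 0$; hence $\Omega_0$ is precisely the set of nonzero null vectors in $\mathbb{O}_0$. For transitivity, I fix a base point $x_0 = \begin{pmatrix} 0 & v_0 \\ 0 & 0 \end{pmatrix}$ with $v_0 \ne 0$ and check by a direct computation in Zorn coordinates that the $SL(V) \subset G_2$ subgroup together with root subgroup elements of $G_2$ lying outside $SL(V)$ suffice to bring an arbitrary null vector to a scalar multiple of $x_0$; this is the classical statement that the punctured null cone of the $7$-dimensional fundamental representation of $G_2$ is a single orbit. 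The line $\k \cdot x_0$ is then a highest weight line in this representation. Labeling the simple roots $\alpha$ (short) and $\beta$ (long), with fundamental weights $\omega_1 = 2\alpha + \beta$ and $\omega_2 = 3\alpha + 2\beta$, the highest weight of $\mathbb{O}_0$ is $\omega_1$, and since $\langle \omega_1, \beta^{\vee} \rangle = 0$ the stabilizer of $\k \cdot x_0$ is the maximal parabolic $Q = P_\beta$. Its Levi $L$ is generated by the $\beta$-root $SL_2$ together with the one-dimensional central torus, so $L \cong GL_2$. The unipotent radical $U$ is the product of the positive root subgroups $U_\gamma$ for which $\gamma$ has nonzero $\alpha$-coefficient; grading by $\alpha$-height yields contributions of dimensions $2, 1, 2$ at heights $1, 2, 3$ respectively, showing that $U$ is a $3$-step nilpotent group of dimension $5$.

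The most delicate step is the rationality of transitivity in (i); after invoking Skolem--Noether for composition algebras this becomes a module-theoretic identification of the stabilizer. The content of (ii) is elementary: an explicit orbit computation in Zorn's model together with standard bookkeeping in the $G_2$ root system.
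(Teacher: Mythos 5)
The paper states this proposition without proof, deferring to \cite{KMRT}, \cite{MS} and \cite{We} for the background on octonions and $G_2$. Your overall strategy --- conjugacy of quadratic composition subalgebras for transitivity in (i), the Hermitian $E$-module structure on $E^{\perp}$ for the stabilizer, and root-system bookkeeping for the parabolic in (ii) --- is the standard one from that literature and is sound.

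There is, however, one concrete slip in part (i). You correctly compute that a trace-zero $x$ with $N(x)=a$ satisfies $\bar{x}=-x$, hence $x^2 = -N(x) = -a$, so $\k[x]\cong \k[t]/(t^2+a)$; but you then assert without comment that this is ``the isomorphism class specified in the statement,'' namely $\k[t]/(t^2-a)$. These two algebras agree only when $-1$ is a square in $\k$. A quick sanity check confirms your computation and not the printed formula: the diagonal element $e = \mathrm{diag}(1,-1)\in\mathbb{O}_0$ has $N(e)=-1$, and $\k[e]\cong\k\times\k$, whose associated stabilizer is the split $SL_3\subset G_2$ fixing the diagonal --- consistent with $\k[t]/(t^2+a)$ at $a=-1$, but not with $\k[t]/(t^2-a)$. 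So the right move is to \emph{flag} the sign mismatch (most likely a typo in the proposition, also at odds with the paper's subsequent remark that $a\in(\k^\times)^2$ gives stabilizer $SL_3$) rather than paper over it.

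Two smaller remarks. In (i) the conclusion ``$SU_3(E_a)$'' really depends on an $E$-valued Hermitian form on $E^{\perp}$, and the ``volume form/unitary determinant'' step is asserted rather than argued; this is fine as a sketch but is where the actual content of the classification lies. In (ii), your transitivity claim is stated over $\bar{\k}$ (``punctured null cone is a single orbit''); to get transitivity over $\k$ itself, note that the stabilizer of the vector $x_0$ is an extension of a form of $SL_2$ by a unipotent group, so its Galois $H^1$ vanishes and the rational orbit is the full set of $\k$-rational nonzero null vectors. With these points addressed, the proof is correct.
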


Now we note:
\begin{itemize}
\item  When $a \in (\k^{\times})^{2}$ in (i), the stabilizer of an element in $\Omega_a$ is
isomorphic to $SL_3$; this explains the 2nd entry in Table 3. 
 
\item In (ii),  the 3-step filtration of $U$ is given by
\[  U \supset [U,U] \supset Z(U) \supset \{1 \} \]
where $[U,U]$ is the commutator subgroup and $Z(U)$ is the center of $U$. Moreover, 
\[ \dim Z(U) =2 \quad \text{and} \quad  \dim [U,U] = 3,\]
so that $[U,U]/ Z(U) \cong \k$. If $\psi$ is a non-trivial character of $\k$, then $\psi$ gives rise to a nontrivial character of $[U,U]$ which is fixed by the subgroup $[L,L] \cong SL_2$. Setting $J = [L,L] \cdot [U,U]$, we may extend $\psi$ to a character of $J$ trivially across $[L,L]$. This explains the 3rd entry of Table 3.
\end{itemize}
\vskip 5pt

Though the octonionic multiplication is neither commutative or associative, the trace form satisfies:
\[  Tr((x \cdot y) \cdot z) = Tr(x \cdot (y \cdot z)),  \]
(so there is no ambiguity in denoting  this element of $\k$ by $Tr( x \cdot y \cdot z)$)  and $G_2$ is precisely the subgroup of $SO(\mathbb{O}, N)$ satisfying
\[  Tr((g x) \cdot (gy) \cdot (gy)) = Tr(x \cdot y \cdot z) \quad \text{  for all $x,y,z \in \mathbb{O}$}. \]
\vskip 10pt

\subsection{\bf Exceptional Jordan algebra and $F_4$.} 

Let $J = J(\mathbb{O})$ denote the 27-dimensional vector space consisting of all $3 \times 3$ Hermitian matrices with entries in $\mathbb{O}$. Then a typical element in $J$ has the form
\[ \alpha = \left( \begin{array}{ccc}
a & z &\bar{y} \\
\bar{z} & b & x \\
y & \bar{x} & c \end{array} \right), \quad \text{with $a,b,c \in \k$ and $x,y,z \in \mathbb{O}$.} \]
The set $J$ is endowed with a multiplication
\[  \alpha \circ \beta = \frac{1}{2}\cdot  (\alpha \beta+ \beta \alpha)  \]
where the multiplication on the RHS refers to usual matrix multiplication. With this multiplication, $J$ is the exceptional Jordan algebra. 
\vskip 5pt

The algebra $J$ carries a natural cubic form $d = \det$ given by the determinant map on $J$, 
and a natural linear form $tr$ given by the trace map. 
Moreover, every element in $J$ satisfies a cubic polynomial, by the analog of the Cayley-Hamilton theorem. An element $\alpha \in J$ is said to be of rank $n$ if its minimal polynomial has degree $n$, so that $ 0 \leq n\leq 3$. For example, $\alpha \in J$ has  rank $1$  if and only if its entries satisfy
\[   N(x) = bc, \, N(y) =ca, \, N(z) = ab,  \,  xy = c \bar{z}, \,  yz = a \bar{x}, \, zx = b \bar{y}. \]

 \vskip 5pt

More generally, the above discussion holds if one uses any composition $\k$-algebra in place of $\mathbb{O}$. Thus, if $B = \k$, a quadratic algebra $K$, a quaternion algebra $D$ or the octonion algebra $\mathbb{O}$, one has the Jordan algebra $J(B)$. One may consider 
the group $\Aut(J(B), \det)$ of invertible linear maps on $J(B)$ which fixes the cubic form $\det$, 
and its subgroup  $\Aut(J, \det, e)$  which fixes an element $e$ with $\det(e) \ne 0$.
 For the various $B$'s, these groups are listed in the following table.
\vskip 5pt
\begin{table}[ht]
\begin{center}
\begin{tabular}{|c|c|c|c|c|}
\hline
$B$ & $\k$ & $K$ & $D$ & $\mathbb{O}$ \\
\hline
$\Aut(J(B), \det)$ & $SL_3$ &  $SL_3 (K)/\Delta \mu_3$ & $SL_3(D)/\mu_2 = SL_6/ \mu_2$ & $E_6$ \\
\hline
$\Aut(J(B), \det, e)$ & $SO_3$ & $SL_3$ & $PGSp_6$ & $F_4$ \\
\hline
\end{tabular}
\caption{}
\end{center}
\end{table}
\vskip 5pt

\begin{proposition} \label{P:jordan}
(i) For any $a \in \k^{\times}$, the group $\Aut(J(B), \det)$ acts transitively on the set of $e \in J$ with $\det(e) = a$, with stabilizer group $\Aut(J(B), \det, e)$ described in the above table. If $e$ is the unit element of $J(B)$, then $\Aut(J(B), \det, e)$ is the automorphism group of the Jordan algebra $J(B)$.
\vskip 5pt

\noindent (ii) The group $F_4 = \Aut(J(\mathbb{O}))$ acts transitively on the set of rank $1$ elements in $J(\mathbb{O})$ of trace $a \ne 0$. The stabilizer of a point is isomorphic to the group $Spin_9$ of type $B_4$. 
\end{proposition}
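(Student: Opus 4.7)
The plan is to deduce both parts from the classical structure theory of cubic Jordan algebras. For part (i), the first step is to invoke the spectral theorem for $J(B)$: every element $\alpha$ admits a decomposition $\alpha = \lambda_1 e_1 + \lambda_2 e_2 + \lambda_3 e_3$, where $\{e_1, e_2, e_3\}$ is a complete system of orthogonal primitive idempotents summing to the unit, and $\det(\alpha) = \lambda_1 \lambda_2 \lambda_3$. The group $\Aut(J(B), \det)$ acts transitively on such idempotent frames (a consequence of Jacobson's coordinatization theorem), so transitivity of $\Aut(J(B), \det)$ on the fiber $\{\det = a\}$ for $a \in \k^\times$ reduces to showing that the frame stabilizer acts transitively on triples $(\lambda_1, \lambda_2, \lambda_3) \in (\k^\times)^3$ with product $a$. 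This stabilizer contains the two-dimensional diagonal torus rescaling the idempotents (with product $1$) together with the permutation group $S_3$, which is plainly enough.

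For the stabilizer assertion in (i), if $e$ is the unit $1 \in J(B)$, then any $g \in \Aut(J(B), \det)$ fixing $e$ automatically preserves the Jordan product, via the Springer formula expressing the Jordan multiplication on a cubic norm structure in terms of $\det$, the associated trace bilinear form, and the unit. Hence $\Aut(J(B), \det, 1)$ coincides with the full Jordan automorphism group of $J(B)$, whose identification with $SO_3, SL_3, PGSp_6, F_4$ for $B = \k, K, D, \mathbb{O}$ respectively is classical. Transitivity in (i) then gives the same identification for any $e$ with $\det(e) \neq 0$, since the stabilizers are all conjugate.

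For part (ii), a rank one element of $J(\mathbb{O})$ of trace $a \neq 0$ is of the form $a \cdot e$ for a primitive idempotent $e$. Transitivity of $F_4$ on such elements is therefore equivalent to transitivity on primitive idempotents, which follows from the same frame-transitivity used in (i), applied now inside the Jordan automorphism group $F_4$ itself. To identify the stabilizer of $e_1 = \mathrm{diag}(1, 0, 0)$ in $F_4$, I use the Peirce decomposition $J(\mathbb{O}) = J_1 \oplus J_{1/2} \oplus J_0$, where $J_1 = \k \cdot e_1$, the complement $J_0$ is the $10$-dimensional spin factor Jordan algebra built on the quadratic space $(\k \oplus \mathbb{O}, N)$, and $J_{1/2}$ is a $16$-dimensional module. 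Any element of the stabilizer preserves this decomposition and induces a Jordan automorphism of $J_0$ (hence acts through $O_9$ on the $9$-dimensional trace-zero part of $J_0$), while acting linearly on $J_{1/2}$. Identifying $J_{1/2}$ with the spin representation of $\mathfrak{so}_9$ and matching Lie algebras shows that the stabilizer is $Spin_9$; the dimension count $52 = 36 + 16$ provides a consistency check.

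The main obstacle is the transitivity of the Jordan automorphism group on primitive idempotent frames of $J(\mathbb{O})$. While this is a classical result, it requires genuine structural input from the theory of exceptional Jordan algebras, typically via Jacobson's coordinatization theorem or the Springer--Veldkamp analysis of the Cayley plane, rather than the direct matrix manipulation that suffices in the classical cases $B = \k, K, D$. Once that structural fact is in hand, the Peirce decomposition and Springer's norm-to-product formula handle all the remaining identifications.
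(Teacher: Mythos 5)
The paper states this Proposition without proof, treating it as classical and referring the reader to background references such as \cite{KMRT}, \cite{MS}, \cite{We}; so there is no paper argument to compare against directly. Evaluating your proposal on its own terms, the broad strategy (frame transitivity, a torus acting on diagonal entries, Springer's recovery of the Jordan product from the cubic norm and the unit, the Peirce decomposition identifying the point stabilizer in $F_4$ with $\mathrm{Spin}_9$) is the right skeleton for a proof over an algebraically closed field, and the Springer-formula observation and the Peirce/$\mathrm{Spin}_9$ identification in part (ii) are correct.

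The genuine gap is the diagonalization step. Over a local field $\k$ the spectral theorem you invoke simply does not hold: a typical $\alpha \in J(B)$ generates a cubic \'etale extension $\k[\alpha]$ which may be a field, in which case $\alpha$ admits \emph{no} decomposition $\alpha = \sum \lambda_i e_i$ with the $e_i \in J(B)$ and $\lambda_i \in \k$. So the reduction of transitivity to ``frame transitivity plus action on scalar triples'' is only valid over $\bar{\k}$. To get transitivity on $\k$-points one must argue separately, typically by showing that the orbit map $\Aut(J(B),\det)/\Aut(J(B),\det,e) \to \{\det = a\}$ is a bijection on $\k$-points, which comes down to a Galois cohomology computation: the relevant obstruction is (a piece of) $H^1(\k, \Aut(J(B),\det,e))$. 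For $B = \mathbb{O}$ this is $H^1(\k, F_4)$ and for part (ii) it is $H^1(\k, \mathrm{Spin}_9)$, both trivial over a $p$-adic field because the groups are simply connected (Kneser), so (ii) and the $E_6/F_4$ instance of (i) go through. But your proposal never raises the rationality question, and this is exactly where the real content is.

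A related, smaller, issue in part (i): even for a diagonalizable element, the claim that the frame-stabilizing torus acts transitively on triples $(\lambda_1,\lambda_2,\lambda_3)$ with fixed product is not automatic. For $B = \k$ the torus in $SL_3$ acting by $X \mapsto gXg^{T}$ sends $\mathrm{diag}(\lambda_i)$ to $\mathrm{diag}(s_i^2 \lambda_i)$, a \emph{squaring} action, so over $\k$ it only moves the $\lambda_i$ within square classes. This is not a cosmetic point: over $\mathbb{R}$ (or over $\mathbb{Q}_p$), $SL_3$ acting by congruence on symmetric $3\times 3$ matrices with $\det = 1$ has more than one orbit, so statement (i) as literally written is false for $B = \k$; the correct rational statement involves the cokernel $H^1(\k, SO_3)$. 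Your proof would need to either work geometrically (over $\bar{\k}$) and then carry out the cohomological descent explicitly, or else restrict attention to the cases ($F_4$ acting in part (ii), $E_6$ acting in part (i)) where the stabilizer is simply connected and the descent is automatic.
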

In particular, the proposition explains the 1st, 4th, 7th and 8th  entry of Table 3.
\vskip 10pt

\subsection{\bf Triality and $Spin_8$}
An element $\alpha \in J = J(\mathbb{O})$ of rank $3$  generates a commutative separable cubic subalgebra $\k(\alpha) \subset J$. For any such cubic $F$-algebra $E$, one may consider the set $\Omega_E$ of algebra embeddings $E \hookrightarrow J$. Then one has:
\vskip 5pt

\begin{proposition}
(i) The set $\Omega_E$ is non-empty and the group $F_4$ acts transitively on $\Omega_E$.
\vskip 5pt

(ii) The stabilizer of a point in $\Omega_E$ is isomorphic to the quasi-split simply-connected group $Spin_8^E$ of absolute type $D_4$. 
\vskip 5pt

(iii) Fix an embedding $j: E \hookrightarrow J$ and let $E^{\perp}$ denote the orthogonal complement of the image of $E$ with respect to the symmetric bilinear form $(\alpha, \beta) = tr(\alpha \circ \beta)$. The action of the stabilizer $Spin_8^E$ of $j$ on $E^{\perp}$ is the 
24-dimensional Spin representation, which on extending scalars to $\overline{\k}$, is the direct sum of the three 8-dimensional irreducible representations of $Spin_8(\overline{\k})$ whose highest weights correspond to the 3 satellite vertices in the Dynkin diagram of type $D_4$. 
\end{proposition}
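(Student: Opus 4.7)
The strategy is to work first over the algebraic closure $\bar{\k}$, where the problem reduces to a classical triality computation, and then descend to $\k$ using Galois cohomology. I begin with the split situation: the diagonal $\k$-subalgebra $E_0 = \k \cdot e_1 \oplus \k \cdot e_2 \oplus \k \cdot e_3 \subset J(\mathbb{O})$ generated by the three primitive idempotents $e_i$ (the diagonal matrix units) is isomorphic to $\k^3$, and its orthogonal complement under $(\alpha,\beta) = tr(\alpha\circ\beta)$ is exactly the space of off-diagonal Hermitian matrices, which visibly splits as $\mathbb{O}_{12}\oplus \mathbb{O}_{23}\oplus \mathbb{O}_{13}$, three $8$-dimensional summands indexed by off-diagonal positions. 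The subgroup of $F_4$ fixing all three $e_i$ is by construction their joint stabilizer; a direct Lie algebra computation (or reference to the Chevalley--Schafer description) identifies it as the split $Spin_8$ sitting inside the chain $Spin_8 \subset Spin_9 \subset F_4$, with $Spin_9$ being the stabilizer of any single rank-$1$ idempotent (Proposition \ref{P:jordan}(ii)). The restriction of $F_4$ to $E_0^{\perp}$ therefore decomposes as a sum of three $8$-dimensional $Spin_8$-representations; since the three summands are permuted nontrivially by the $S_3$-action coming from permuting the $e_i$ (which lifts to a triality subgroup of $F_4$), they must be the three pairwise non-isomorphic $8$-dimensional irreducibles, namely the vector and the two half-spin representations, corresponding to the three satellite vertices of the $D_4$ Dynkin diagram.

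With the split case in hand, I would treat the general $E$ by Galois descent. For non-emptiness in (i), choose $\alpha\in J(\mathbb{O})$ whose minimal polynomial equals the characteristic polynomial of a generator of $E$; such $\alpha$ exists because the regular semisimple rank-$3$ locus in $J(\mathbb{O})$ surjects onto all separable monic cubics via the characteristic polynomial, a consequence of Proposition \ref{P:jordan}(i) applied to varying $e$. Then $\k[\alpha] \subset J(\mathbb{O})$ is an embedded copy of $E$. For transitivity, observe that $\Omega_E(\bar\k)$ is a single $F_4(\bar\k)$-orbit: after base change, any embedding of $E\otimes \bar\k \cong \bar\k^{3}$ into $J(\mathbb{O})\otimes \bar\k$ is $F_4(\bar\k)$-conjugate to the standard diagonal one, by simultaneously diagonalizing the three commuting idempotents. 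The $\k$-rational orbits on $\Omega_E$ are classified by $\ker\bigl(H^{1}(\k,\mathrm{Stab})\to H^{1}(\k,F_4)\bigr)$; since the stabilizer is connected and semisimple, this kernel is trivial on the component of our chosen embedding, giving $F_4(\k)$-transitivity.

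Part (ii) then follows: the stabilizer is a $\k$-form of $Spin_8$, and its specific twist is pinned down by how the Galois group $\Gamma = \mathrm{Gal}(\bar\k/\k)$ permutes the three idempotents of $E\otimes\bar\k$. Under the triality identification established in the split case, this Galois action on the idempotents matches the $\Gamma$-action on the three satellite vertices of the $D_4$ diagram, which is precisely the definition of the quasi-split trialitarian form $Spin_8^{E}$. Part (iii) is now immediate: the decomposition $E^{\perp}\otimes\bar\k \cong \mathbb{O}_{12}\oplus\mathbb{O}_{23}\oplus\mathbb{O}_{13}$ is $\Gamma$-equivariant with $\Gamma$ permuting summands in exactly the same way it permutes the three $8$-dimensional irreducibles of $Spin_8(\bar\k)$ via triality, so the representation of $Spin_8^{E}$ on $E^{\perp}$ is the $24$-dimensional Spin representation described in the statement.

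The main obstacle is the split-case identification: showing that the joint stabilizer of the three diagonal idempotents is exactly $Spin_8$ and computing its action on the three off-diagonal copies of $\mathbb{O}$ via triality. All of the trialitarian content of the proposition is concentrated in this single computation; once it is carried out, the passage to arbitrary étale cubic $E$ is a formal exercise in Galois descent together with the combinatorial correspondence between permutations of diagonal idempotents and permutations of satellite vertices of $D_4$.
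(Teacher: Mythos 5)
The paper itself offers no proof of this proposition, deferring to \cite{KMRT}, \cite{MS} and \cite{We}, so there is no in-text argument to compare your sketch against. Your strategy --- reduce to the split diagonal $E_0 = \k e_1 \oplus \k e_2 \oplus \k e_3$, identify the joint stabilizer of the $e_i$ as split $Spin_8$ with $E_0^{\perp}$ breaking into the three off-diagonal copies of $\mathbb{O}$, and then descend to arbitrary \'etale cubic $E$ --- is indeed the standard route and captures the content of all three parts.

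There are, however, two gaps in the descent portion. First, the claim that the kernel of $H^1(\k,\mathrm{Stab}) \to H^1(\k,F_4)$ is trivial ``since the stabilizer is connected and semisimple'' is not a correct general principle: connected semisimple groups can have nontrivial $H^1$ over local fields (e.g. $PGL_n$ over a $p$-adic field). What makes the argument go through over non-archimedean $\k$ is Kneser's theorem, i.e. that $H^1(\k, G) = 1$ for $G$ \emph{simply connected}; both $Spin_8^E$ and $F_4$ are simply connected, so the kernel vanishes for that reason. Over $\k = \R$ (which the paper explicitly allows) Kneser fails and a separate argument is needed for transitivity of $F_4(\R)$ on $\Omega_E(\R)$; you should either supply one or explicitly restrict to the $p$-adic case. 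Second, your non-emptiness argument leans on Proposition \ref{P:jordan}(i), but that result only gives transitivity on elements of a \emph{fixed determinant} --- it says nothing about realizing an arbitrary separable characteristic polynomial, which is what you need. A cleaner route is to embed $E$ directly into the subalgebra $J(\k)$ of symmetric $3\times 3$ matrices: the regular representation of $E$ on itself is self-adjoint for the trace form of $E$, so choosing an orthogonal basis of $(E,\mathrm{tr}_{E/\k})$ yields $E \hookrightarrow \mathrm{Sym}_3(\k) = J(\k) \subset J(\mathbb{O})$, giving a $\k$-point of $\Omega_E$ with no cohomological input.
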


\vskip 5pt

As an example, suppose that $E = \k \times \k \times \k$, and we fix the natural embedding $E \hookrightarrow J$ whose image is the subspace of diagonal elements in $J$.
 Then $E^{\perp}$ is naturally $\mathbb{O} \oplus \mathbb{O} \oplus \mathbb{O}$, and the split group $Spin_8$ acts on this, preserving each copy of $\mathbb{O}$. This gives  an injective homomorphism
 \[ \rho:  Spin_8  \longrightarrow SO(\mathbb{O}, N) \times 
SO(\mathbb{O}, N) \times SO(\mathbb{O}, N) \]
whose image is given by
\[  Spin_8 \cong \{  g = (g_1, g_2, g_3):  Tr((g_1x) \cdot( g_2 y) \cdot (g_3z)) = Tr(x \cdot y \cdot z) \quad \text{for all $x,y,z \in \mathbb{O}$} \}. \]
From this description, one sees that there is an action of $\Z/3\Z$ on $Spin_8$ given by the cyclic permutation of the components of $g$, and the subgroup fixed by this action is precisely 
\[  G_2 = Spin_8^{\Z/3\Z}. \]
This explains the 6th entry of Table 3.  
\vskip 5pt

More generally , the stabilizer of a triple $(x,y,z) \in \mathbb{O}^3$ with $(x \cdot y) \cdot z \in \k^{\times}$ is a subgroup of $Spin_8$ isomorphic to $G_2$ (see \cite{We}). 
For example,  the stabilizer in $Spin_8$ of the vector $(1,0,0) \in \mathbb{O}^3$ is isomorphic to the group $Spin_7$ which acts naturally on $\mathbb{O}_0 \oplus \mathbb{O} \oplus \mathbb{O}$. The action of $Spin_7$ on $\mathbb{O}_0$ is via  the standard representation of $SO_7$, whereas its action on the other two copies of $\mathbb{O}$ is via the Spin representation. From the discussion above, we see that the stabilizer in $Spin_7$  of $(x, \bar{x}) \in \mathbb{O}^2$, with $N(x) \ne 0$, is isomorphic to the group $G_2$. In particular, this explains the 5th entry of Table 3. 

\vskip 5pt
By the above discussion, it is not difficult to show:
\vskip 5pt

\begin{proposition} \label{P:D4}
The group $Spin_8$ acts transitively on the set of rank $1$ elements in $J(\mathbb{O})$ with diagonal part $(a,b,c) \in \k^{\times} \times \k^{\times} \times \k^{\times}$. Moreover, the stabilizer of a point is isomorphic to $G_2$.
\end{proposition}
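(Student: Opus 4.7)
The plan is to parametrize the rank-$1$ elements with given nonzero diagonal as a concrete variety in $\mathbb{O}^{2}$, to recognize the pointwise stabilizer from a statement already in Section~4, and to conclude transitivity by a dimension count combined with Galois-cohomological vanishing.

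A rank-$1$ element $\alpha \in J(\mathbb{O})$ with diagonal $(a,b,c) \in (\k^{\times})^{3}$ is determined by its off-diagonal triple $(x,y,z) \in \mathbb{O}^{3}$ subject to the rank-$1$ relations recorded in \S 4.2, namely $N(x) = bc$, $N(y) = ca$, $N(z) = ab$ together with $xy = c\bar{z}$, $yz = a\bar{x}$, $zx = b\bar{y}$. Since $c \in \k^{\times}$, the relation $xy = c\bar{z}$ forces $z = \bar{y}\bar{x}/c$; using alternativity of $\mathbb{O}$ and the assumed norm conditions, one checks that $N(z) = ab$, $yz = a\bar{x}$, and $zx = b\bar{y}$ are then automatic. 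Hence the rank-$1$ elements with diagonal $(a,b,c)$ are parametrized by the smooth variety
\[
\mathcal{V} = \{(x,y) \in \mathbb{O}^{2} : N(x) = bc,\ N(y) = ca\}
\]
of dimension $14$. The group $Spin_{8} \subset F_{4}$ acts on $\mathcal{V}$ because it fixes the diagonal embedding $E = \k^{3} \hookrightarrow J$ pointwise and preserves Jordan multiplication, hence rank.

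For any triple $(x,y,z)$ as above one has $(xy)z = c\bar{z}\cdot z = c\, N(z) = abc \in \k^{\times}$, so the paragraph preceding Proposition~\ref{P:D4}, asserting that the stabilizer in $Spin_{8}$ of any $(x,y,z) \in \mathbb{O}^{3}$ with $(xy)z \in \k^{\times}$ is a copy of the split group $G_{2}$, identifies the stabilizer of our rank-$1$ element as $G_{2}$. The matching equality $\dim Spin_{8} - \dim G_{2} = 28 - 14 = \dim \mathcal{V}$ shows that every $Spin_{8}$-orbit is open in $\mathcal{V}$. Geometrically, $\mathcal{V}$ is a product of two geometrically irreducible affine quadric cones minus their vertices, hence itself geometrically irreducible, so over $\bar{\k}$ there is a single orbit.

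The main obstacle is upgrading this geometric transitivity to transitivity on $\k$-points. The cleanest approach is cohomological: the set of $Spin_{8}(\k)$-orbits on $\mathcal{V}(\k)$ fibers over the pointed set $H^{1}(\k, G_{2})$, which is trivial for the split exceptional group $G_{2}$ over any local field of characteristic zero by a theorem of Serre and Springer; so a single orbit remains. A more hands-on alternative is to proceed in two triality-stages: first apply surjectivity of one projection $Spin_{8} \twoheadrightarrow SO(\mathbb{O})$ together with Witt's theorem on the split form $(\mathbb{O}, N)$ to move $x$ to any chosen vector of norm $bc$; then use the transitivity of the resulting stabilizer $Spin_{7}$ on vectors of norm $ca$ in its $8$-dimensional spin representation, whose own stabilizer is again the split $G_{2}$ (as recorded in the discussion preceding Proposition~\ref{P:D4}).
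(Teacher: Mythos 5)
The paper does not actually supply a proof of Proposition~\ref{P:D4} --- it is stated after ``By the above discussion, it is not difficult to show:'' --- so your argument has to stand on its own. Your parametrization of rank-$1$ elements with fixed nonzero diagonal by the variety $\mathcal{V}$ (solving for $z = \overline{xy}/c$ and verifying the remaining rank-$1$ relations via alternativity), the identification of the stabilizer as $G_2$ from the fact that $(xy)z = cN(z) = abc \in \k^{\times}$, and the dimension count $28 - 14 = 14 = \dim \mathcal{V}$ forcing every orbit to be open are all correct, and they reduce the problem to showing that $\mathcal{V}(\k)$ is a single $Spin_8(\k)$-orbit.

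The gap is in that last rationality step. You assert that $H^1(\k, G_2) = 1$ for any local field $\k$ of characteristic zero, citing Serre--Springer. This is true over a $p$-adic field (Kneser's vanishing theorem for simply connected groups), and trivially over $\C$, but it is false over $\R$: since $G_2 = \Aut(\mathbb{O})$, the set $H^1(\R, G_2)$ classifies real octonion algebras, of which there are two (split and compact), so $|H^1(\R, G_2)| = 2$. What you actually need is triviality of the kernel of $H^1(\k, G_2) \to H^1(\k, Spin_8)$; over $\R$ this still holds because the nontrivial class, corresponding to the compact octonions, pushes forward to the compact form of $Spin_8$, which is nontrivial --- but that is a separate argument you do not make. (Alternatively, over $\R$ one can observe that $Spin(4,4)(\R)$ is connected and each quadric $\{N(x) = bc\}$ with $N$ split of signature $(4,4)$ and $bc \neq 0$ is connected, so $\mathcal{V}(\R)$ is connected and, being a union of open orbits with conjugate stabilizers, is a single orbit.) Your ``hands-on'' alternative has a parallel defect: $Spin_8(\k) \to SO(\mathbb{O})(\k)$ is not surjective on rational points (its image is the spinor kernel), so invoking Witt's theorem for $SO(\mathbb{O})$ does not directly give transitivity for $Spin_8(\k)$; one must check that the spinor-kernel image still acts transitively, e.g.\ via surjectivity of the spinor norm on the $7$-dimensional split stabilizer. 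A minor side remark: $\mathcal{V}$ is a product of two smooth affine quadrics (with $N(x) = bc \neq 0$), not ``quadric cones minus their vertices''; the geometric irreducibility you need is still true, just not for the reason stated.
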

\vskip 5pt

\subsection{\bf $SL_3 \backslash G_2$ and $G_2 \backslash Spin_7$.} 
From the discussion above, we see that there are isomorphisms of homogeneous varieties
\[  SL_3 \backslash G_2 \cong SO_6 \backslash SO_7 \quad \text{and} \quad 
  G_2 \backslash Spin_7 \cong Spin_7 \backslash Spin_8 \cong SO_7 \backslash SO_8.\]
Since we have already determined the spectral decomposition of $L^2(SO_6 \backslash SO_7)$ and $L^2(SO_7 \backslash SO_8)$ in terms of the spectral decomposition of $L^2(\tilde{SL}_2)$ and $L^2(SL_2)$ respectively, we obtain the desired description for $SL_3\backslash G_2$ and $G_2 \backslash Spin_7$. Thus the rest of the paper is devoted to the remaining cases in Table 3.

\vskip 10pt
\section{\bf Exceptional Dual Pairs}
In this section, we  introduce some exceptional dual pairs  contained in the  adjoint groups of type $F_4$, $E_6$, $E_7$ and $E_8$. We begin with a uniform construction of the exceptional Lie algebras of the various exceptional groups introduced above. This construction can be found in \cite{R} and will be useful for exhibiting various reductive dual pairs.  The reader may consult \cite{MS}, \cite{R},  \cite{S} and \cite{We} for the material of this section.

\vskip 5pt
\subsection{\bf Exceptional Lie algebras.}
 Consider the chain of Jordan algebras
\[  \k \subset  E  \subset J(\k) \subset J(K) \subset  J(D) \subset J(\mathbb{O}) \]
where $E$ is a cubic $\k$-algebra, $K$ a quadratic $\k$-algebra and $D$ a quaternion $\k$-algebra. Denoting such an algebra by $\mathcal{R}$, the determinant map $\det$  of $J(\mathbb{O})$ restricts to give a cubic form on $\mathcal{R}$. 
Now set
\begin{equation} \label{E:lie}
 \mathfrak{s}_{\mathcal{R}} =  \mathfrak{sl}_3 \oplus \mathfrak{m}_{\mathcal{R}} \oplus( \k^3 \otimes  \mathcal{R}) \oplus ( \k^3 \otimes  \mathcal{R})', \end{equation} 
with 
\[  \mathfrak{m}_{\mathcal{R}} = {\rm Lie}(Aut(\mathcal{R}, \det)). \]
One can define a Lie algebra structure on $\mathfrak{s}_{\mathcal{R}}$ \cite{R} whose type is given by the following table.
\vskip 5pt

\begin{center}
\begin{tabular}{|c|c|c|c|c|c|c|}
 \hline  
 $\mathcal{R}$ & $\k$ & $E$ & $J(\k)$ & $J(K)$ & $J(D)$ & $J(\mathbb{O})$ \\
 \hline 
 $\mathfrak{m}_{\mathcal{R}}$ & $0$ & $E_0$ & $\mathfrak{sl}_3$ & $\mathfrak{sl}_3(K)$ & $\mathfrak{sl}_6$ & $\mathfrak{e}_6$ \\
 \hline 
 $\mathfrak{s}_{\mathcal{R}}$ & $\mathfrak{g}_2$ & $\mathfrak{d}_4$ & $\mathfrak{f}_4$ & $\mathfrak{e}_6$ & $\mathfrak{e}_7$ & $\mathfrak{e}_8$ \\
 \hline
 \end{tabular}
 \end{center} 
 \vskip 5pt
 
 We denote the corresponding adjoint group with Lie algebra $\mathfrak{s}_{\mathcal{R}}$ by $S_{\mathcal{R}}$, or simply by $S$ if $\mathcal{R}$ is fixed and understood.  
  \vskip 5pt

Let $\{e_1, e_2, e_3 \}$ be the standard basis of $\k^3$ with dual basis $\{ e_i' \}$.
The subalgebra of $\mathfrak{sl}_3$ stabilizing the lines $\k e_i$ is the diagonal torus $\mathfrak{t}$.  The nonzero wieghts under the adjoint action of $\mathfrak{t}$ on $\mathfrak{s}_{\mathcal{R}}$ form a root system of type $G_2$.  The long root spaces are of dimension $1$ and are precisely the root spaces of $\mathfrak{sl}_3$, i.e. the spaces spanned by $e_i' \otimes e_j$.  We shall label these long roots by $\beta$, $\beta_0$ and $\beta_0 - \beta$, with corresponding 1-parameter subgroups
\[  u_{\beta}(x) = \left( \begin{array}{ccc}
1 & x & 0 \\
 & 1 & 0 \\
 & & 1 \end{array} \right), \quad 
 u_{\beta_0}(x) = \left( \begin{array}{ccc}
1 & 0 & x \\
 & 1 & 0 \\
 & & 1 \end{array} \right), \quad 
  u_{\beta_0 - \beta}(x) = \left( \begin{array}{ccc}
1 & 0 & 0 \\
 & 1 & x \\
 & & 1 \end{array} \right)  
 \]
 We also let
 \[  w_{\beta} = \left( \begin{array}{ccc}
0& 1 & 0 \\
 -1 & 0 & 0 \\
 0 &0  & 1 \end{array} \right) \]
 denote the Weyl group element associated to $\beta$.
 The short root spaces, on the other hand,  are $e_i \otimes \mathcal{R}$ and $e_i' \otimes \mathcal{R}'$ and are thus identifiable with $\mathcal{R}$.

 \vskip 5pt

 \subsection{\bf Exceptional dual pairs.} 
 We can now exhibit 2 families of  dual pairs in $S_{\mathcal{R}}$. 
 \vskip 5pt
 
 \begin{itemize}
 \item From (\ref{E:lie}), one has
 \[  \mathfrak{sl}_3 \oplus \mathfrak{m}_{\mathcal{R}} \subset  \mathfrak{s}_{\mathcal{R}}. \]
 This gives a family of dual pairs
 \begin{equation} \label{E:dualpair1} 
   SL_3 \times  Aut(\mathcal{R}, \det) \longrightarrow S_{\mathcal{R}}.
   \end{equation}
 
 \vskip 5pt
\item For a pair of Jordan algebras $\mathcal{R}_0 \subset \mathcal{R}$, we have $\mathfrak{s}_{\mathcal{R}_0} \subset \mathfrak{s}_{\mathcal{R}}$ which gives
 a subgroup $G_{\mathcal{R}_0} \subset S_{\mathcal{R}}$, where $G_{\mathcal{R}_0}$ is isogeneous to $S_{\mathcal{R}_0}$.
  If $G'_{\mathcal{R}_0, \mathcal{R}}=   Aut(\mathcal{R}, \mathcal{R}_0)$,
   then one has a second family of  dual pairs
 \begin{equation}  \label{E:dualpair2}
  G_{\mathcal{R}_0} \times G'_{\mathcal{R}_0,\mathcal{R}} \longrightarrow S_{\mathcal{R}}. \end{equation}
 With $\mathcal{R}_0 \subset \mathcal{R}$ fixed, we shall simply write $G \times G'$ for this dual pair. For the various pairs $\mathcal{R}_0 \subset \mathcal{R}$ of interest here, we tabulate the associated dual pairs in the table below.
\vskip 5pt

\begin{center}
\begin{tabular}{|c|c|c|c|}
\hline
$\mathcal{R}_0 \subset \mathcal{R}$  & $\k  \subset J(K)$  & $E \subset J(D)$ & $J(\k) \subset J(D)$  \\
\hline
$G \times G'$ &  $G_2 \times PGL_3$ & $Spin_8 \times SL_2(E)/\Delta \mu_2$ & $F_4 \times PGL_2$ \\
\hline
\end{tabular}
\end{center}
\end{itemize}
\vskip 5pt
  Observe that in the language of Table 3, with $X = H \backslash G$, the dual pairs described above are precisely  $G_X \times G$. 
  \vskip 5pt

\subsection{\bf Heisenberg parabolic.}
The presentation (\ref{E:lie}) also allows one to describe certain parabolic subalgebras of 
$\mathfrak{s}_{\mathcal{R}}$.  If we consider the adjoint action of 
\[  t  = {\rm diag}(1,0,-1) \in \mathfrak{sl}_3 \]
on $\mathfrak{s}$, we obtain a grading $\mathfrak{s} = \oplus_i \mathfrak{s}[i]$ by the eigenvalues of $t$. Then  
\[ \begin{cases} 
 \mathfrak{s}[0] = \mathfrak{t} \oplus \mathfrak{m} \oplus (e_2 \otimes \mathfrak{R}) \oplus (e_2' \otimes \mathcal{R}')  \\
 \mathfrak{s}[1] = \k e_2' \otimes e_1 \oplus (e_1 \otimes \mathcal{R}) \oplus (e_3' \otimes \mathcal{R}' ) \oplus \k e_3' \otimes e_2 \\
 \mathfrak{s}[2] = \k  e_3' \otimes e_1, \end{cases} \]
and $\mathfrak{p} = \oplus_{i \geq 0} \mathfrak{s}[i]$ is a Heisenberg parabolic subalgebra. 
\vskip 5pt

We denote the corresponding Heisenberg parabolic subgroup by $P_S = M_S \cdot N_S$. In particular, its unipotent radical is a Heisenberg group with 1-dimensional center $Z_S \cong u_{\beta_0}(\k) \cong \mathfrak{s}[2]$  and  
\[  N_S/Z_S \cong \mathfrak{s}[1] = \k \oplus  \mathcal{R}  \oplus \mathcal{R}' \oplus \k, \]
 The semisimple  type of its Levi factor $M_S$ is given in the table below. 
 \vskip 5pt
\begin{center}
\begin{tabular}{|c|c|c|c|c|}
\hline 
$S$ & $F_4$ & $E_6$ & $E_7$ & $E_8$ \\ 
\hline
$M_S$ & $C_3$ & $A_5$ &$D_6$ & $E_7$ \\
\hline
\end{tabular}
\end{center}
\vskip 5pt
The Lie bracket defines an alternating form on $N_S/Z_S$ which is fixed by $P_S^1 = [P_S , P_S]$. This gives an embedding 
\[  P_S^1 = M_S^1 \cdot N_s \hookrightarrow Sp(N_S/Z_S) \ltimes N_S. \]
\vskip 5pt

\subsection{\bf Intersection with dual pairs.} 
For a pair $\mathcal{R}_0 \subset \mathcal{R}$, with associated dual pair given in (\ref{E:dualpair2}), it follows by construction that
\[ ( G_{\mathcal{R}_0} \times G'_{\mathcal{R}_0, \mathcal{R}} ) \cap P_S = 
P \times   G'_{\mathcal{R}_0, \mathcal{R}}, \]
where $P$ is the Heisenberg parabolic subgroup of $G_{\mathcal{R}_0}$.
On the other hand, for the family of dual pairs given in (\ref{E:dualpair1}), 
\[ ( SL_3 \times Aut (\mathcal{R},\det)) \cap P_S = B \times  Aut (\mathcal{R},\det) \]
where $B$ is a Borel subgroup of $SL_3$.

\vskip 5pt

\subsection{\bf Siegel parabolic.}
The group $S$ of type $E_6$ or $E_7$
has a Siegel parabolic subgroup $Q_S = L_S \cdot U_S$ whose unipotent radical $U_S$ is abelian; we call this a Siegel parabolic subgroup.  The semisimple type of $L_S$ and the structure of $U_S$ as an $L_S$-module is summarized in the following table.

\vskip 10pt

\begin{center}
\begin{tabular}{|c|c|c|c|}
\hline 
$S$ & $L_S$ & $U_S$  &$U_S$ as $L_S$-module  \\
\hline 
  $E_6$ & $D_5$ & $\mathbb{O} \oplus \mathbb{O}$ & half spin representation of dimension $16$ 
  \\
\hline
$E_7$ & $E_6$ & $J(\mathbb{O})$ & miniscule representation of dimension $27$ \\
\hline
\end{tabular}
\end{center}
\vskip 5pt

\noindent Let $\Omega_Q \subset \overline{U}_S$ be the orbit of a highest weight vector in $\overline{U}_S$. The following proposition describes the set $\Omega_Q$:
\vskip 5pt

\begin{proposition}
(i) If $S$ is of type $E_6$, then 
\[  \Omega_Q = \{ (x,y) \in \mathbb{O}^2:  N(x) = N(y) = 0 = x \cdot \bar{y} \}. \] 
\vskip 5pt

(ii) If $S$ is of type $E_7$, then
\[  \Omega_Q = \{ \alpha \in J: \text{rank}(\alpha) =1 \}. \]
\end{proposition}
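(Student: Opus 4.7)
The strategy is uniform for both cases: exhibit an explicit highest weight vector of $\overline{U}_S$, verify that it lies in the claimed set, and invoke classical transitivity results for the $L_S$-orbit to conclude.

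The first step is to use the grading $\mathfrak{s}_{\mathcal{R}} = \bigoplus_i \mathfrak{s}[i]$ arising from (\ref{E:lie}), restricted to the Siegel parabolic, to write down $\overline{U}_S$ together with its $L_S$-module structure. For $S$ of type $E_7$, this identification reads $U_S \cong J(\mathbb{O})$ as an $E_6 \cong \Aut(J(\mathbb{O}), \det)$-module via the minuscule $27$-dimensional representation. For $S$ of type $E_6$, it reads $U_S \cong \mathbb{O} \oplus \mathbb{O}$ as a $Spin_{10}$-module via one of the two half-spin representations, with the subgroup $Spin_8 \subset Spin_{10}$ preserving each summand; by triality one identifies the two summands with $\mathbb{O}$ in such a way that the expressions $N(x)$, $N(y)$, and $x \cdot \bar{y}$ become $Spin_8$-equivariant in $(x,y) \in \mathbb{O}^2$.

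For part (ii), I would take the idempotent $E_{11} \in J(\mathbb{O})$ (with a single $1$ in the $(1,1)$-entry and zeros elsewhere) as a highest weight vector. It is rank $1$ by inspection of its minimal polynomial, and since $E_6$ preserves $\det$ it preserves the notion of rank, so $\Omega_Q$ is contained in the rank $1$ locus of $J(\mathbb{O})$. The reverse inclusion amounts to transitivity of $E_6$ on nonzero rank $1$ elements. This follows from Proposition \ref{P:jordan}(ii), which gives transitivity of $F_4 \subset E_6$ on rank $1$ elements of fixed nonzero trace, combined with a one-parameter subgroup of $E_6$ (coming from the Levi of a different maximal parabolic) that rescales the trace nontrivially and absorbs the trace-zero rank $1$ elements in the closure. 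Alternatively one may invoke directly the well-known classification of $E_6$-orbits on its minuscule representation $J(\mathbb{O})$, which has four orbits corresponding to ranks $0, 1, 2, 3$; the unique closed nonzero orbit is then precisely the highest weight orbit.

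For part (i), I would verify that a vector of the form $(x_0, 0)$ with $x_0 \in \mathbb{O}$ a rank $1$ element (so $N(x_0) = 0$) is a highest weight vector for $Spin_{10}$ (that is, a pure spinor) and satisfies the three defining conditions trivially. Since the map $(x,y) \mapsto (N(x), N(y), x \cdot \bar{y})$ from $\mathbb{O}^2$ to $\k \oplus \k \oplus \mathbb{O}$ is $Spin_{10}$-equivariant by Step 1, its zero locus is $L_S$-invariant. One then invokes the classical Cartan characterization of pure spinors in dimension $10$: the pure spinor variety coincides with the zero locus of the quadratic equations above, and $Spin_{10}$ acts transitively on its set of nonzero points.

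The main obstacle is the identification in Step 1 of the abstract $L_S$-module structure on $U_S$ coming from (\ref{E:lie}) with the concrete octonionic/Jordan-algebraic model. Verifying that $\det$ (in the $E_7$ case) and the quadratic map above (in the $E_6$ case) are genuinely equivariant requires tracking root vectors through the Lie bracket on $\mathfrak{s}_{\mathcal{R}}$ as defined in \cite{R}, which is the technical heart of the proof. Once this equivariance is established, both transitivity assertions reduce to classical facts on the geometry of minuscule representations and pure spinors.
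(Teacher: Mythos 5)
The paper itself gives no proof of this proposition; it is stated as a structural fact drawn from the references on exceptional groups and minimal representations (e.g.\ \cite{R}, \cite{MS}, \cite{We}), so there is no ``paper proof'' to compare against. Your outline is the standard route and is mathematically sound: identify $U_S$ (hence $\overline{U}_S$) as the minuscule $L_S$-module, exhibit a highest weight vector inside the claimed locus, argue that the locus is $L_S$-stable via the invariant tensor, and cite transitivity on the highest weight orbit. Two small points worth tightening. First, the paper's stated definition of rank (``minimal polynomial has degree $n$'') is inconsistent with the explicit rank-$1$ criterion it gives immediately afterward -- your chosen vector $E_{11}$ has minimal polynomial $\lambda^2 - \lambda$ of degree $2$, yet satisfies the listed rank-$1$ equations; you implicitly and correctly use the explicit criterion (vanishing of the Freudenthal adjoint), so just make that the operative definition. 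Second, your alternative (a) for part (ii) -- bootstrapping from $F_4$-transitivity on fixed nonzero trace plus a trace-rescaling one-parameter subgroup -- is more delicate than it looks, since rank-$1$ elements of trace zero exist and must be shown to lie in the same $E_6$-orbit (not merely in the closure); your alternative (b), citing the classification of $E_6$-orbits on $J(\mathbb{O})$ by rank, is the clean argument and should be the one you commit to. For part (i), your identification of the cutout map $(x,y) \mapsto (N(x), N(y), x\cdot\bar y)$ with the $\mathrm{Spin}_{10}$-equivariant projection $\mathrm{Sym}^2 S^+ \twoheadrightarrow V_{10}$ (whose vanishing is Cartan's pure spinor condition) is exactly right; this equivariance, extending the manifest $\mathrm{Spin}_8$-equivariance after restricting $V_{10} = V_8 \oplus H$, is the genuine technical content, as you correctly flag.
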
  
\vskip 5pt

\subsection{\bf Intersection with dual pairs.} 
With $\mathcal{R}_0 \subset \mathcal{R}$ fixed, with associated dual pair $G \times G'$ as given in (\ref{E:dualpair2}), one may choose $Q_S$ so that
\[  (G \times G') \cap Q_S = G \times Q _0\]
with $Q_0 = L_0 \cdot U_0$ a Siegel parabolic subgroup of $G'$, so that $U_0$ is abelian.  The group $Q_0$ and the embedding $U_0 \subset U_S$ can be described by the following table.
\begin{center}
\begin{tabular}{|c|c|c|c|}
\hline
$G'$  &   $PGL_3$ & $SL_2(E)/\Delta \mu_2$ & $PGL_2$ \\
\hline
$Q_0$ & maximal parabolic & Borel & Borel \\
\hline
$U_0 \subset U_S$ & $\k^2 \subset \mathbb{O}^2$ & $E \subset J(\mathbb{O})$ & $\k \subset J(\mathbb{O})$ \\
\hline
\end{tabular}
\end{center} 
Identifying the opposite unipotent radical $\bar{U}_0$ with the dual space of $U_0$ using the Killing form, one has a natural projection 
\[  \tau: \bar{U}_S \longrightarrow U_0. \]
This is simply given by the projection from $U_S$ to $U_0$ along $U_0^{\perp}$.
 


\section{\bf Generic Orbits}
In this section, we consider an orbit problem which will be important for our applications.
Namely, with the notation at the end of the last section, we have an action of $L_0 \times G$
on the set $\Omega_Q \subset \overline{U}_S$. We would like to determine the generic orbits of this action. For simplicity, we shall consider the case when $S = E_6$ and $E_7$ separately.
\vskip 5pt

\subsection{\bf Dual Pair in $E_6$.}
 Suppose first that $S  = E_6$ so that $G' \times G = PGL_3 \times G_2$. In this case, the natural $L \times G_2$-equivariant projection $\tau:  \bar{U}_S \longrightarrow \bar{U} _0$ is given by
\[  \tau(x,y) = (Tr(x), Tr(y)). \]
 The nonzero elements in $\bar{U}_0 \cong \k^2$ are in one orbit of $L_0$; we fix a representative $(0,1) \in \k^2$ and note that its stabilizer in $L_0$ is the ``mirabolic" subgroup $P_{L_0}$ of $L_0 \cong GL_2$. 
Then the fiber over $(0,1)$ is given by
\[  \{ (x,y) \in \mathbb{O}^2:  N(x) = N(y) = Tr(x) = 0, \, Tr(y) =1, \, x \cdot \bar{y} = 0\}, \]
and carries a natural action of $P_{L_0} \times G_2$. We note:

\vskip 5pt

\begin{lemma}  \label{L:G2}
(i) The group $G_2$ acts transitively on the fiber $\tau^{-1}(0,1)$ and the stabilizer of a point $(x_0, y_0)$ is isomorphic to the subgroup $[L,L] \cdot Z(U) \subset J$. 
\vskip 5pt

(ii) If we consider the subset 
$\{  (x_0 , y_0 + \lambda x_0):  \lambda \in \k \} \subset \tau^{-1}(0,1)$, then the subgroup of $P_{L_0} \times G_2$ stabilizing this subset is isomorphic to 
\[  (P_{L_0} \times L \cdot [U,U])^0 = \{ (h, g \cdot u):  \det h = \det g \}. \]
The action of the element
\[  \left( \begin{array}{cc}
a & b \\
0 & 1 \end{array} \right) \times g \cdot u \in (P_{L_0} \times L \cdot [U,U])^0 \]
is by
\[  (x_0, y_0+ \lambda x_0) \mapsto  (x_0,  y_0+ a^{-1} \cdot (\lambda + b - p(u)) x_0) \]
where  $p: J \longrightarrow \k \cong J / [L,L] \cdot Z(U)$ is the natural projection.  
Thus, there is a unique generic $L_0 \times G_2$ orbit on $\Omega_Q$ given by
\[  (L_0 \times G_2) \times_{(P_{L_0} \times L \cdot [U,U])^0}  \k. \]

\end{lemma}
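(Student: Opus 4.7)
The plan is to work explicitly in Zorn's model of $\mathbb{O}$ and to exploit the structure of the maximal parabolic $Q = L \cdot U$ of $G_2$ given by Proposition 5.1(ii). For part (i), I will fix the representative $x_0 = \left(\begin{smallmatrix} 0 & v_0 \\ 0 & 0 \end{smallmatrix}\right)$ with $v_0 \in V$ nonzero (using the $G_2$-transitivity on rank $1$ trace zero null elements). A direct computation of $x_0 \cdot \bar y$ for $y = \left(\begin{smallmatrix} c & w \\ w' & d \end{smallmatrix}\right)$ using Zorn's multiplication rule should reduce the conditions $\Tr(y) = 1$, $N(y)=0$, $x_0\cdot\bar y = 0$ to $c=0$, $d=1$, $w\in \k v_0$, $w'\in v_0^{\perp}\subset V'$, so the fiber over $x_0$ is a $3$-dimensional affine space. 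The $G_2$-stabilizer of $x_0$ is $[L,L]\cdot U$, with $\dim [L,L]=3$, $\dim U=5$, $\dim Z(U)=2$; a dimension count together with the fact that $Z(U)$ acts trivially on the fiber (to be checked directly) will show that $U/Z(U)$ acts simply transitively there. Choosing the base point $y_0 = e_2 = \left(\begin{smallmatrix} 0 & 0 \\ 0 & 1 \end{smallmatrix}\right)$, whose $G_2$-stabilizer is the diagonal $SL(V)\cong SL_3$, the stabilizer of $(x_0,y_0)$ becomes $SL_3\cap ([L,L]\cdot U)$: the mirabolic of $SL_3$ fixing $v_0\in V$, isomorphic to $SL_2\ltimes \k^2 \cong [L,L]\cdot Z(U)$, giving (i).

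For part (ii), I first verify $(x_0,y_0+\lambda x_0)\in \tau^{-1}(0,1)$: the trace conditions are linear and immediate; $N(y_0 + \lambda x_0) = N(y_0) + \lambda\Tr(y_0\bar x_0) + \lambda^2 N(x_0) = 0$ uses $y_0 \bar x_0 = 0$ (immediate in Zorn's model) and $N(x_0)=0$; and $x_0\overline{(y_0+\lambda x_0)} = x_0\bar y_0 + \lambda x_0 \bar x_0 = 0$. In the parametrization of (i) this family corresponds to the subfamily $\{w'=0,\,w=\lambda v_0\}$, a $1$-dimensional slice along the $[U,U]/Z(U)$-direction in the $U/Z(U)$-orbit of~$y_0$.

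To identify the stabilizer of the family, I plan to compute: (a) $u \in [U,U]$ fixes $x_0$ and sends $y_0 \mapsto y_0 + p(u)\, x_0$ modulo $Z(U)$-stable terms, where $p:[U,U]\to [U,U]/Z(U)\cong \k$ is the natural projection; (b) $g \in L \cong GL_2$ scales $x_0$ by a character $\chi(g)$ which, matched against the long-root structure of the Levi, equals $\det g$; (c) $h = \left(\begin{smallmatrix} a & b \\ 0 & 1 \end{smallmatrix}\right)\in P_{L_0}$ stabilizes $(0,1)\in \bar U_0$ and acts on $\bar U_S \cong \mathbb{O}^2$ through the restricted half-spin representation of $L_S$, scaling $x_0$ by $a$ and translating $y\mapsto y+b\,x_0$ on the fiber. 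For the joint action of $(h,g\cdot u)$ to preserve the point $x_0$ itself (not merely the line $\k x_0$), the two scalings must cancel, forcing $\det h = \det g$; combining the three translations on $y$ will then produce the stated formula $\lambda\mapsto a^{-1}(\lambda + b - p(u))$.

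For the generic orbit conclusion: since the stabilizer acts on $\lambda \in \k$ via this affine action with generically $1$-dimensional orbits, the $L_0\times G_2$-orbit of a generic $(x_0, y_0 + \lambda x_0)$ will have dimension $\dim L_0 + \dim G_2 - \dim(P_{L_0}\times L\cdot [U,U])^0 + 1 = \dim \Omega_Q$, hence be open and dense in $\Omega_Q$, and the generic orbit is identified with the fibered homogeneous space $(L_0\times G_2)\times_{(P_{L_0}\times L\cdot [U,U])^0} \k$. I expect the main technical obstacle to be the character and translation computations in (a)--(c): verifying that $L$ scales $x_0$ by $\det$ and that $u\in [U,U]$ translates $y_0$ by $p(u)\,x_0$ requires careful root-space bookkeeping in the $G_2$-subalgebra sitting inside $\mathfrak{e}_6 = \mathfrak{s}_{J(K)}$, since these embeddings arise through the abstract construction (\ref{E:lie}) rather than by direct matrix manipulation.
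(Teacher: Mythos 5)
The paper states Lemma \ref{L:G2} without proof, so there is no argument in the text to compare against. Your choice of tools --- Zorn's model with the rank-one nilpotent $x_0 = \left(\begin{smallmatrix} 0 & v_0 \\ 0 & 0\end{smallmatrix}\right)$ and the idempotent $y_0 = e_2$ --- is natural, and the pieces of the computation that you do carry out are correct: the reduction of the fiber conditions over $x_0$ to $c=0$, $d=1$, $w\in\k v_0$, $w'\in v_0^\perp$; the verification that $(x_0,y_0+\lambda x_0)\in\tau^{-1}(0,1)$ using $y_0\bar x_0=0$ and $N(x_0)=0$; the identification of the point stabilizer as $SL(V)\cap([L,L]\cdot U)$, the mirabolic of $SL_3$ at $v_0$, and hence as $[L,L]\cdot Z(U)$ (the key point, which you should make explicit, is that $Z(U)$ is spanned by the two highest root spaces, both long, so $Z(U)\subset SL(V)$, whereas $[U,U]/Z(U)$ is a short-root line and so moves $y_0$); and the orbit dimension count $4+14-8+1=11=\dim\Omega_Q$.

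There is, however, an inconsistency in your sketch of (b)--(c) that you need to resolve before the computation will reproduce the lemma's formula. You assert that $h=\left(\begin{smallmatrix} a & b \\ 0 & 1\end{smallmatrix}\right)$ scales $x_0$ by $a$, that $g\in L$ scales $x_0$ by $\chi(g)=\det g$, and that ``the two scalings must cancel, forcing $\det h=\det g$.'' But if both scale by $a$, they compose to $a^2$, not $1$. For cancellation to be equivalent to $\det h=\det g$, exactly one of the two must act by the reciprocal character: either $L_0$ acts on $\bar U_S\cong\mathbb{O}^2$ by the contragredient of the standard $GL_2$-action on $\k^2$ (so $h$ scales $x_0$ by $a^{-1}$; this is the convention forced by $P_{L_0}$ being the stabilizer of a vector in $\bar U_0$, not in $U_0$), or the character of $L\cong GL_2$ on $\k x_0$ is $\det^{-1}$ rather than $\det$. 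The choice also governs the signs of the $b$- and $p(u)$-translations and is exactly the source of the $a^{-1}$ in the target formula $\lambda\mapsto a^{-1}(\lambda+b-p(u))$. As written, chaining (a)--(c) with your stated normalizations yields an expression of the shape $\lambda\mapsto a\lambda + a\,p(u) - b$, which does not agree with the lemma; reconciling the two requires fixing, consistently and in one place, whether $L_S$ acts on $\bar U_S$ by $\mathrm{Ad}(\ell)$ or $\mathrm{Ad}(\ell^{-1})$, which power of $\det$ gives the $L$-action on the highest short-root line, and the sign in $u\cdot y_0 = y_0 \pm p(u)x_0$. This is not a structural gap in the plan, but the ``careful root-space bookkeeping'' you defer is genuinely load-bearing, not a formality.
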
 
\vskip 5pt

\subsection{\bf Dual Pairs in $E_7$.}
 Now suppose that $S = E_7$. 
As above,  we first determine the generic $L_0$-orbits on $\bar{U}_0$. For each generic $L_0$-orbit  in $\bar{U}_0$, let us take a representative $\chi$ and let $Z_{\chi}$ denote its stabilizer in $L_0$.  Then the fiber  $\tau^{-1}(\chi)$ is preserved by $Z_{\chi} \times G$. In each case, it follows by Prop. \ref{P:jordan}(ii) and Prop. \ref{P:D4} that $G$ acts transitively on $\tau^{-1}(\chi)$. Denote the stabilizer in $G$ of $\tilde{\chi} \in \tau^{-1}(\chi)$ by $H_{\chi}$.  Then under the action of $Z_{\chi} \times G$,  the stabilizer group $\tilde{H}_{\chi}$ of $\tilde{\chi}$ sits in a short exact sequence
\[  \begin{CD}
1 @>>> H_{\chi} @>>> \tilde{H}_{\chi} @>>p> Z_{\chi} @>>> 1. \end{CD} \]
 In fact, $\tilde{H}_{\chi}$ is a direct product
\[  \tilde{H}_{\chi} \cong \Delta Z_{\chi} \times H_{\chi}\subset Z_{\chi} \times G \]
Thus,  the generic $L_0 \times G$-orbits are given by the disjoint union
\[  \bigcup_{\text{generic $\chi$}}  (Z_{\chi} \times G) \times_{\tilde{H}_{\chi}} \tilde{\chi} \]
where the union runs over the generic $L_0$-orbits on $\bar{U}_0$ and $\tilde{\chi}$ is an element in $\tau^{-1}(\chi)$ with stabilizer $\tilde{H}_{\chi}$. 
We summarize this discussion in the following table. 
\vskip 5pt

\begin{center}
\begin{tabular}{|c|c|c|}
\hline 
$G \times G'$ & $F_4\times PGL_2$ & $Spin_8 \times SL_2(E)/\Delta\mu_2$   \\
\hline
 generic $L_0$-orbits   & singleton  &  $(a,b,c) \in (\k^{\times}/ \k^{\times 2})^3/\Delta \k^{\times}$   \\
 \hline  
 $\tau^{-1}(\chi)$ & $\alpha \in J(\mathbb{O})$ of rank $1$ and  trace $1$  & $(x,y,z) \in \mathbb{O}^3$ with $Tr(xyz) = abc$    \\
\hline
$Z_{\chi}$ & trivial & center of $G' = \mu_2 \times \mu_2$   \\
\hline
$H_{\chi}$ & $Spin_9$ & $G_2$    \\
\hline
 
\end{tabular}
\end{center}
 
\vskip 10pt


\vskip 5pt

\section{\bf Minimal Representation}
In this section, we introduce the (unitary) minimal representation $\Pi$ of $S$ and describe some models for $\Pi$. Note that when $S = F_4$, $\Pi$ is actually a representation of the double cover of $F_4$. When $S$ is of type $E$, then $\Pi$ is a representation of $S$.
\vskip 5pt

\subsection{\bf Schrodinger model.} 
Because the groups $S= E_6$ and $E_7$ have a Siegel parabolic subgroup, there is an analog of the Schrodinger model for the minimal representation $\Pi$ of $S$. 
By \cite{DS}, the representation $\Pi$ can be realized on the space $L^2(\Omega_Q,\mu_Q)$ of square-integrable functions on $Q$ with respect to a $L_S$-equivariant measure  $\mu_Q$ on $\Omega_Q$. This is analogous to the Schrodinger model of the Weil representation. In particular, we have the following action of $Q_S$ on $\Pi$:
\[  \begin{cases}
(l\cdot f)(\chi)  = \delta_{Q_S}(l)^r \cdot f(l^{-1} \cdot \chi) \\
(u \cdot f)(\chi) = \chi(u)  \cdot f(\chi),
\end{cases} \]
where $r = 1/4$ (resp. $2/9$) if $S$ is of type $E_6$ (resp. $E_7$). 
\vskip 5pt

 \subsection{\bf Mixed model.}
For general $S = S_{\mathcal{R}}$, one has the analog of the mixed model, on which the action of the Heisenberg group $P_S$ is quite transparent.  
Recall that $N_S/Z_S = \k \oplus \mathcal{R} \oplus \mathcal{R}' \oplus \k$ and one has an embedding 
\[  P_S^1 = [P_S, P_S] \hookrightarrow {\rm Sp}(N_S/Z_S) \ltimes N_S.\]
 Then by \cite{KS},  the mixed model of the minimal representation is realized on  the Hilbert space
\[  {\rm Ind}_{P_S^1}^{P_S} L^2(\mathcal{R}' \oplus \k') \cong L^2(\k^{\times} \oplus \mathcal{R} \oplus \k), \]
where the action of $P_S^1$  on $L^2(\mathcal{R} \oplus \k)$ is via the Heisenberg-Weil representation (associated to any fixed additive character $\psi$ of $\k$). The explicit formula can be found in \cite[Prop. 43]{R}. 
\vskip 5pt

In fact, one can describe the full action of $S$ on $\Pi$ by giving the action of an extra Weyl group element. More precisely, if $w_{\beta}$ is the standard Weyl group element in $SL_3$ associated to the root $\beta$ (see \S 5.1), then by \cite[Prop. 47]{R}, one has
\[   (w_{\beta} \cdot f)(t, x, a) = \psi(\det(x)/a) \cdot  f(-a/t, x, -a). \]
Since $S$ is generated by $P_S$ and the element $w_0$, this completely determines the representation $\Pi$. 
\vskip 5pt

For example, one may work out the action of an element $u_{-\beta}(b) = w_{\beta} u_{\beta}(b) w_{\beta}^{-1}$ (see \S 5.1).  A short computation gives:
\[  (u_{-\beta}(b) \cdot f)(t,x,a) = \psi\left(\frac{b \det(x)}{a-t^2}\right)  \cdot  f(t - \frac{ab}{t}, a - \frac{a^2b}{t^2}, x). \]
If $f$ is continuous, then the above formula gives:
\begin{equation} \label{E:weyl}
  (u_{-\beta}(b) \cdot f)(1,x,0)  = \psi( -b \det(x)) \cdot f(1,x,0). \end{equation}
This formula will be useful in the last section.

 \vskip 10pt

\section{\bf Exceptional Theta Correspondences: $G \times G'$}
Now we may study the restriction of the minimal representation $\Pi$ to the dual pairs introduced earlier. In this section, we shall treat the family of dual pairs $G \times G'$ given in (\ref{E:dualpair2}).  For simplicity, we shall consider the case when $S = E_6$ and $E_7$ separately. 
\vskip 5pt

\subsection{\bf Restriction to $G \times G' \subset E_7$.}
Suppose first that $S$ is of type $E_7$, so that $\Omega_Q$ is the set of rank $1$ elements in $J= J(\mathbb{O})$. Consider the Schrodinger model for $\Pi$.  On restricting $\Pi$ to $Q _0 \times G$, we have the following formulae:
\[
\begin{cases} 
(g \cdot f)(\alpha) = f( g^{-1} \cdot \alpha) \quad \text{for $g \in G$;} \\
(u(a) \cdot f)(\alpha) =  \psi( tr(a \cdot \alpha))  \cdot f(\alpha) \quad \text{for $u(a) \in U_0$;} \\
(l \cdot f)(\alpha) =  |\det (l)|^s \cdot f(l^{-1} \cdot \alpha) \quad \text{for $l \in L_0$,}
\end{cases}  \]
where $s$ is a real number whose precise value will not be important to us here. 
\vskip 5pt

From our description of generic $L_0 \times G$-orbits given in \S 6.2, we deduce as in the derivation of (\ref{eq:L2Xdexplicit}) that as a $Q_0\times G$-module, 
\begin{equation} \label{E:explicit}
  \Pi  \cong \bigoplus_{\text{$\chi$ generic} }
{\rm Ind}^{Q_0 \times G}_{U_0 \times \tilde{H}_{\chi}} \chi \boxtimes 1 
\cong   \bigoplus_{\text{$\chi$ generic} } {\rm Ind}^{Q_0}_{Z_{\chi} \cdot U_0} L^2(H_{\chi} \backslash G).
\end{equation}
Here, $G$ and $Z_{\chi}$ act on $L^2(H_{\chi} \backslash G)$ by right and left translation
respectively, and $U_0$ acts by $\chi$. 
 
 
\vskip 10pt

\subsection{\bf Abstract decomposition.}
On the other hand,  there is an abstract direct integral decomposition
\[  \Pi = \int_{\widehat{G'}} \pi \boxtimes \Theta(\pi) \, d\nu_{\Theta}(\pi). \] 
 Restricting to $Q_0$, we may write: 
 \[  \pi|_{Q_0}  \cong \bigoplus_{\chi}  {\rm Ind}^{Q_0}_{Z_{\chi} \cdot U_0}   W_{\chi}(\pi) \]
 for some $Z_{\chi} \cdot U_0$-module $W_{\chi}(\pi)$ with $U_0$ acting via $\chi$. Thus, 
 \begin{equation} \label{E:abstract}  
 \Pi \cong \bigoplus_{\chi}  \int_{\widehat{G'}}  {\rm Ind}^{Q_0}_{Z_{\chi} \cdot U_0} W_{\chi}(\pi) \boxtimes \Theta(\pi) \,  d\nu_{\Theta}(\pi). 
  \end{equation}
\vskip 5pt

\subsection{\bf Comparison.}
Comparing (\ref{E:explicit}) and (\ref{E:abstract}), we deduce that
 there is  an isomorphism of $G_A$-modules:
 \begin{equation} \label{E:exceptional}
   L^2(H_{\chi} \backslash G) \cong \int_{\widehat{G'}} W_{\chi}(\pi) \boxtimes \Theta(\pi) \, d\nu_{\Theta}(\pi). \end{equation}
 Since $G'$ is isogenous to a product of $SL_2$, the space $W_{\chi}(\pi) =Wh_{\chi}(\pi)$ has been determined in Theorem \ref{T:bessel}(3) and is at most 1-dimensional.  \vskip 10pt

\subsection{\bf Mixed model.}
 To explicate  the measure $d\nu_{\Theta}(\pi)$,  we consider the mixed model of $\Pi$ restricted to $P \times G'$. Since
 \[  N /Z_S = \k \oplus \mathcal{R}_0 \oplus \mathcal{R}_0' \oplus \k \subset N_S/Z_S. \]
 Under its adjoint action on $\mathcal{R} \oplus \k$, 
  $G'$ fixes $\mathcal{R}_0 \oplus \k$ pointwise, and its action on $\mathcal{R}_0^{\perp}$ is  described in the following table.
 \vskip 5pt
 
 \begin{center}
\begin{tabular}{|c|c|c|}
\hline 
$G'$ & $\mathcal{R}_0$  & $\mathcal{R}_0^{\perp}$   \\
\hline
  $PGL_2$ & $J(\k)$ & $adjoint^{\oplus 3}$ \\
\hline
 $SL_2^3/\Delta \mu_2$ & $\k^3$ &  $\oplus_{i=1}^3 std_i \boxtimes std_{i+1}^{\vee}$ \\
\hline
\end{tabular}
\end{center}
 \vskip 5pt
 
\noindent Thus as a representation of $G'$, we have:
 \[
   \Pi 
   \cong L^2(\k^{\times}) \otimes   L^2(\mathcal{R}_0 \oplus \k) \otimes L^2(\mathcal{R}_0^{\perp}) \]
   where $G'$ acts only on $L^2(\mathcal{R}_0^{\perp})$ and the action is geometric. Thus, $\Pi$ is weakly equivalent to $L^2(\mathcal{R}_0^{\perp})$ as a representation of $G'$.
    By our description of the $G'$-module $\mathcal{R}_0^{\perp}$, we have:
  \begin{lemma}
 The representation $L^2(\mathcal{R}_0^{\perp})$ (and hence $\Pi$) is weakly equivalent to the regular representation $L^2(G')$
    \end{lemma}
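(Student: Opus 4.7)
The plan is to treat the two cases $G' = PGL_2$ and $G' = SL_2^3/\Delta\mu_2$ separately, both by means of generic orbit analysis on $\mathcal{R}_0^\perp$.

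In the case $G' = PGL_2$, we have $\mathcal{R}_0^\perp = \mathfrak{sl}_2^{\oplus 3}$ with the diagonal adjoint action. The first step is to verify that the generic $PGL_2$-orbit on $\mathfrak{sl}_2^{\oplus 3}$ has trivial stabilizer; this is immediate, because the centralizer in $PGL_2$ of a strongly regular semisimple $X_1 \in \mathfrak{sl}_2$ is a Cartan subgroup $A$, and the centralizer inside $A$ of a second element $X_2 \in \mathfrak{sl}_2$ not commuting with $X_1$ is trivial. Hence an open $PGL_2$-invariant subset of $\mathfrak{sl}_2^{\oplus 3}$ of full measure is foliated by orbits isomorphic to $PGL_2$, and Mackey theory decomposes $L^2(\mathfrak{sl}_2^{\oplus 3})$ as a direct integral of copies of $L^2(PGL_2)$ over the orbit space, yielding the desired weak equivalence. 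Equivalently, one may apply Corollary \ref{C:rank1} to each of the three tensor factors and then invoke Fell's absorption principle after restricting $L^2(PGL_2)^{\widehat\otimes 3}$ to the diagonal $PGL_2$.

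In the case $G' = SL_2^3/\Delta\mu_2$, we decompose $\mathcal{R}_0^\perp = V_{12} \oplus V_{23} \oplus V_{31}$ where each $V_{i,i+1} \cong M_2(\k)$ carries the bi-action $(g_i, g_{i+1}) \cdot X = g_i X g_{i+1}^{-1}$ of $(SL_2)_i \times (SL_2)_{i+1}$, the remaining factor acting trivially. For each $V_{i,i+1}$, the generic orbit is a determinant level set, which as a homogeneous space is isomorphic to $(SL_2 \times SL_2)/\Delta SL_2 \cong SL_2$; hence $L^2(V_{i,i+1})$ is weakly equivalent as an $(SL_2)_i \times (SL_2)_{i+1}$-bimodule to the bi-regular representation $L^2(SL_2) \cong \int_{\widehat{SL_2}} \sigma \boxtimes \sigma^\vee \, d\mu_{SL_2}(\sigma)$. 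Assembling the three factors, and noting that $\Delta\mu_2$ acts trivially on $\mathcal{R}_0^\perp$ so that the action descends to $G'$, one obtains that as an $SL_2^3$-module
\[
L^2(\mathcal{R}_0^\perp) \sim \int(\sigma_{12}\otimes\sigma_{31}^\vee)\boxtimes(\sigma_{12}^\vee\otimes\sigma_{23})\boxtimes(\sigma_{23}^\vee\otimes\sigma_{31})\,d\mu_{SL_2}^{\otimes 3}(\sigma_{12},\sigma_{23},\sigma_{31}).
\]

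The key remaining step, which I expect to be the main obstacle, is to verify that the support of this triple direct integral coincides with the Plancherel support of $G'$. This reduces to showing that for each of the three $SL_2$ factors, the union of the supports of the direct-integral decompositions of the tensor products $\sigma \otimes \tau^\vee$ (as $\sigma,\tau$ range over the tempered dual of $SL_2$) exhausts $\widehat{SL_2}$; this is standard, following from the Clebsch--Gordan-type decomposition of tensor products of tempered $SL_2$-representations. Once this support matching is established, weak containment of $L^2(\mathcal{R}_0^\perp)$ in $L^2(G')$ follows from Fell's amenability criterion applied to the generic stabilizers (which are abelian and hence amenable), and the reverse weak containment follows from the support computation.
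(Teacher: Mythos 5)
Your proof is essentially correct, but for the $SL_2^3/\Delta\mu_2$ case it takes a genuinely different route from the paper. The paper's argument lifts the action to $\tilde{G'} = GL_2^3/\Delta\k^{\times}$, observes that the generic $\tilde{G'}$-orbits on $\mathcal{R}_0^{\perp}$ have stabilizer $\Delta T$ with $T$ a maximal torus of $PGL_2$, and then reduces to Corollary~\ref{C:rank1} together with the continuity of induction, finally restricting from $\tilde{G'}$ back to $G'$. By contrast, you decompose each of the three summands $V_{i,i+1}$ separately as an $SL_2\times SL_2$-bimodule and then argue via the direct-integral decomposition of triple tensor products of tempered $SL_2$-representations. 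Both routes work, and your direct geometric argument for the $PGL_2$ case (trivial generic stabilizer, hence the Mackey decomposition is a direct integral of copies of $L^2(PGL_2)$) is, if anything, cleaner than citing Corollary~\ref{C:rank1}.

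However, two points in the $SL_2^3/\Delta\mu_2$ case need tightening. First, your stated reduction is not quite right as phrased: you say it suffices to check that for \emph{each} $SL_2$ factor the union of supports of $\sigma\otimes\tau^{\vee}$ exhausts the tempered dual, but the three conditions are coupled through the variables $\sigma_{12},\sigma_{23},\sigma_{31}$, so one must exhibit a \emph{simultaneous} choice. This is easily repaired --- fix all three $\sigma_{ij}$ to be infinite-dimensional tempered representations, so that each $\sigma_{ij}\otimes\sigma_{jk}^{\vee}$ already has support equal to the entire tempered dual, and then check that the triple of central characters on $\Delta\mu_2$ works out --- but you should say so. Second, the assertion that tensor products of infinite-dimensional tempered $SL_2$-representations exhaust the tempered dual, while true, is a genuine external input (essentially a Clebsch--Gordan/Plancherel statement for $SL_2$) rather than a formality, and it deserves an explicit reference or proof; the paper's argument avoids this by recycling its own Corollary~\ref{C:rank1}, which was established via the theta correspondence earlier. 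In short: your approach is sound but depends on a nontrivial fact about tensor products that the paper's argument does not need, and the "support matching" paragraph as written elides the simultaneity issue.
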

 \begin{proof}
 When $G' = PGL_2$, this follows from Corollary \ref{C:rank1}.
 When $G' = SL_2^3/\Delta \mu_2$, the representation of $G'$ on $E_A^{\perp}$ is the restriction of a representation of $\tilde{G'} = GL_2^3 / \Delta \k^{\times}$ (by the same formula). Now the action of $\tilde{G'}$ on $\mathcal{R}_0^{\perp}$ has finitely many open orbits with representatives $(1,1, g) \in GL_2^3$ with $g$ regular semisimple, and the stabilizer of such a representative is $\Delta T$ with $T$ a maximal torus in $PGL_2$. Hence, as a representation of $\tilde{G'}$, 
 $L^2(\mathcal{R}_0^{\perp})$ is weakly equivalent to
 \[  \bigoplus_T  {\rm Ind}^{\tilde{G'}}_{\Delta T} \C 
 \cong \bigoplus_T {\rm Ind}^{\tilde{G'}}_{\Delta PGL_2}   L^2(T \backslash PGL_2)  \]
 as $T$ runs over conjugacy classes of maximal tori in $PGL_2$.
  By Corollary \ref{C:rank1} and the continuity of induction, we deduce that $L^2(\mathcal{R}_0^{\perp})$ is 
 weakly equivalent to $L^2(\tilde{G'})$. Thus, on restriction to $G'$, $L^2(E^{\perp})$ is weakly equivalent to $L^2(G')$, as desired.
 \end{proof}
 \vskip 5pt
 Concluding, we have:

  \begin{theorem}
 There is  an isomorphism of $G$-modules:
 \[  L^2(H_{\chi} \backslash G) \cong \int_{\widehat{G'}} W_{\chi}(\pi) \boxtimes \Theta(\pi) \, d\mu_{G'}(\pi), \]
   with $W_{\chi}(\pi) = Wh_{\chi}(\pi)$ as given in Theorem \ref{T:bessel}(3)  and  $\mu_{G'}$
 is  the Plancherel measure.
\end{theorem}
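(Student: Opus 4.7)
The plan is to combine the isomorphism \eqref{E:exceptional} already established via the Schr\"odinger model with two further identifications: one for the multiplicity space $W_\chi(\pi)$ and one for the measure $\nu_\Theta$. Recall that \eqref{E:exceptional} reads
\[ L^2(H_\chi \backslash G) \cong \int_{\widehat{G'}} W_\chi(\pi) \boxtimes \Theta(\pi) \, d\nu_\Theta(\pi), \]
and was obtained by matching the explicit orbit-by-orbit decomposition \eqref{E:explicit} of $\Pi|_{Q_0 \times G}$ against the abstract direct-integral decomposition \eqref{E:abstract} coming from the restriction of $\Pi$ to $G' \times G$. To arrive at the statement of the theorem it therefore suffices to show (i) that $W_\chi(\pi) \cong Wh_\chi(\pi)$ and (ii) that $\nu_\Theta = \mu_{G'}$.

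Step (i) is almost immediate from Theorem~\ref{T:bessel}(3). The groups $G'$ that enter the $E_7$ discussion are $PGL_2$ and $SL_2(E)/\Delta \mu_2$ (which for $E$ split coincides with $SL_2^3/\Delta \mu_2$), i.e., up to isogeny a product of copies of $SL_2$. Accordingly the underlying $\varepsilon$-Hermitian module $W$ splits as a product of $2$-dimensional pieces, so Theorem~\ref{T:bessel}(3) applies on each factor and yields the desired identification $W_\chi(\pi) \cong Wh_\chi(\pi)$ with finite, indeed at most one-dimensional, multiplicity. Genericity of $\chi$, needed for $\Orb_\chi \in \Omega_W$, follows from the generic-orbit analysis of \S6.

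For step (ii) the decisive input is the mixed-model description of $\Pi$ together with the lemma established just before the theorem, which asserts that $\Pi$ is weakly equivalent to $L^2(G')$ as a $G'$-module. This weak equivalence rests on the factorization
\[ \Pi \cong L^2(\k^\times) \otimes L^2(\mathcal{R}_0 \oplus \k) \otimes L^2(\mathcal{R}_0^\perp) \]
in which $G'$ acts geometrically only on $L^2(\mathcal{R}_0^\perp)$, combined with Corollary~\ref{C:rank1} (for $PGL_2$) or its extension to $\tilde G' = GL_2^3/\Delta\k^\times$ (for $SL_2^3/\Delta \mu_2$), and continuity of induction. Weak equivalence already forces $\nu_\Theta$ and $\mu_{G'}$ to lie in the same measure class. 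To promote this to equality, I would revisit the mixed-model formulas and unfold the $G'$-Plancherel disintegration of the third tensor factor explicitly, then compare with the abstract decomposition $\int \pi \boxtimes \Theta(\pi) \, d\nu_\Theta(\pi)$ and invoke uniqueness of direct-integral decompositions once the multiplicity field has been pinned down.

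The step I expect to be the main obstacle is precisely this last one: converting the weak equivalence of $\Pi|_{G'}$ with $L^2(G')$ into the sharper assertion $\nu_\Theta = \mu_{G'}$, rather than merely identifying their measure classes. In the classical setting the analogous step was carried out by the explicit calculation leading to \eqref{eq:plancherelmeasure}; here one must perform the corresponding calculation using the explicit action of $P_S^1 \hookrightarrow Sp(N_S/Z_S)\ltimes N_S$ on the mixed-model Hilbert space. Once (i) and (ii) are in place, substituting back into \eqref{E:exceptional} yields the stated theorem.
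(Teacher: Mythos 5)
Your proposal follows essentially the same route as the paper: start from the Schr\"odinger-model comparison giving \eqref{E:exceptional}, identify $W_\chi(\pi)$ with $Wh_\chi(\pi)$ via Theorem \ref{T:bessel}(3) because $G'$ is isogenous to a product of $SL_2$'s, and then pin down the measure $\nu_\Theta$ via the mixed-model factorization and the weak-equivalence lemma. The one place where you deviate is the worry at the end: you flag ``promoting weak equivalence to the sharper assertion $\nu_\Theta = \mu_{G'}$'' as the main obstacle and propose an extra explicit unfolding, but this is not actually needed. In a direct-integral disintegration of a type-I representation the base measure is only ever determined up to equivalence --- any measure in the same class can be used, with the Radon--Nikodym derivative absorbed into the inner products on the multiplicity spaces, and this renormalization does not change the isomorphism type of $W_\chi(\pi)$ as an $M_\chi N$-module (in particular it does not change its dimension). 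So once the mixed-model factorization $\Pi \cong L^2(\k^\times)\otimes L^2(\mathcal{R}_0\oplus\k)\otimes L^2(\mathcal{R}_0^\perp)$ (with $G'$ acting geometrically only on the third factor) together with the lemma shows that the spectral measure class of $\Pi|_{G'}$ is that of $\mu_{G'}$, one may simply take $\nu_\Theta = \mu_{G'}$ and conclude; there is no further calculation to do, and this is exactly what the paper does.

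If there is a residual subtlety worth being careful about, it is the one you gesture at but perhaps for a slightly different reason: ``weak equivalence'' in the strict $C^*$-algebraic sense only controls the support of the spectral measure, not absolute continuity, and one genuinely needs the latter. The paper gets this in the relevant cases from Corollary \ref{C:rank1}, whose proof goes through an explicit pushforward identity $(\Theta_\chi)_*(\mu_{G(W)}) = \mu_{G(V)}$, so the absolute continuity is in fact established there (and similarly via the tensor factorization for the $SL_2^3/\Delta\mu_2$ case). Citing Corollary \ref{C:rank1} --- as you do --- is therefore the right move, and closes the argument without the further unfolding you propose.
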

   \vskip 5pt
In addition,   as we discussed in \S 3.7, the smoooth analog of our argument in this section implies that
   \[  W_{\chi}(\pi) = Wh_{\chi}(\pi) \cong  Hom_{H_{\chi}}(\Theta^{\infty}(\pi^{\infty}), \C) = 
    Hom_{H_{\chi}}(\Theta(\pi)^{\infty}, \C). \]
     \vskip 10pt
     
\subsection{\bf Restriction to $PGL_3 \times G_2$.}
We now treat the dual pair $PGL_3 \times G_2$ in $S = E_6$, which 
 can be done by a similar analysis. In this case, $\Omega_Q \subset \mathbb{O}^2$.  If we restrict the action of $S$ to $Q_0 \times G_2$, 
we deduce by Lemma \ref{L:G2}(ii) that as a representation of $Q \times G_2$, 
\[  \Pi  \cong   {\rm Ind}^{Q_0 \times G_2}_{(P_{L_0} \times L \cdot [U,U])^0 \cdot U}  L^2(\k) \]
where the action of $(P_{L_0} \times L\cdot [U,U])^0$ on $L^2(\k)$ is given through the geometric action described in Lemma \ref{L:G2}(ii) and the action of $U_0$ is by a nontrivial character fixed by $P_{L_0}$.
 \vskip 5pt
 
 By using the Fourier transform on $L^2(\k)$, we deduce that as a representation of 
$(P_{L_0} \times L\cdot [U,U])^0$, 
\[  L^2(\k) \cong {\rm Ind}_{U_{L_0} \times J}^{(P_{L_0} \times L\cdot [U,U])^0} \psi^{-1} \boxtimes \psi.\]
Hence, as a representation of $Q_0 \times G_2$
\begin{equation} \label{E:G2}   \Pi \cong {\rm Ind}^{Q_0}_{N_0} \chi \boxtimes {\rm Ind}^{G_2}_J \psi  \end{equation}
where $N_0 = U_{L_0} \cdot U_0$ is the unipotent radical of a Borel subgroup of $PGL_3$ and $\chi$ is a generic character of $N_0$. 
\vskip 5pt

On the other hand, we have abstractly
\begin{equation} \label{E:G22}   
\Pi \cong \int_{\widehat{PGL_3}} \pi|_{Q_0} \otimes \Theta(\pi) \, d \nu_{\Theta}(\pi). \end{equation}
We note that if $\pi$ is tempered, then 
\[  \pi|_{Q_0} \cong {\rm Ind}_{N_0}^{Q_0} \chi, \]
in which case we deduce on comparing (\ref{E:G2}) and (\ref{E:G22}) that
\begin{equation} \label{E:G23}
   L^2((J,\psi) \backslash G_2) = {\rm Ind}_J^{G_2} \psi \cong 
\int_{\widehat{PGL_3}}  \Theta(\pi) \, d\nu_{\Theta}(\pi). \end{equation}
For (\ref{E:G23}) to hold, we thus need to show that $\nu_{\Theta}$ is absolutely continuous with respect to the Plancherel measure of $PGL_3$.
\vskip 5pt

For this, we examine the mixed model of $\Pi$ which  is realized on $L^2(\k^{\times} \times J(\k^2) \times \k)$. Noting that $J(\k^2) \cong \mathfrak{gl}_3$ as $PGL_3$-module \cite{MS}, we deduce  that as a representation of $PGL_3$, $\Pi$ is weakly equivalent to the representation on $L^2(\mathfrak{sl}_3)$ associated to the adjoint action on $\mathfrak{sl}_3$. 
As in Corollary \ref{C:rank1}, we know that $L^2(\mathfrak{sl}_3)$ is weakly equivalent to  
$\bigoplus_T L^2(T \backslash PGL_3)$,  with $T$ running over conjugacy classes of maximal tori in $PGL_3$. 
\vskip 5pt

Using the same argument 
as in \cite[\S 6]{Sakellaridis-Venkatesh}, one can show that for each $T$, the spectral measure for $L^2(T \backslash PGL_3)$ is absolutely continuous with respect to the Plancherel measure of $PGL_3$, and hence so is the spectral measure of $L^2(\mathfrak{sl}_3)$;  this justifies (\ref{E:G23}) and shows that
  \[  L^2((J,\psi) \backslash G_2) = {\rm Ind}_J^{G_2} \psi \cong 
\int_{\widehat{PGL_3}}  W(\pi) \otimes \Theta(\pi) \, d\mu_{PGL_3}(\pi) \]
  for some multiplicity space $W(\pi)$ of dimension $\leq 1$.
  \vskip 5pt
  
It is natural to state:

\vskip 5pt

\begin{conjecture}
For an adjoint simple algebraic group $G$, 
the representation   $L^2(\mathfrak{g})$ of $G$ is weakly equivalent to the regular representation $L^2(G)$. 
\end{conjecture}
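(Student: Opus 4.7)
The plan is to generalize the argument used in the proof of Corollary \ref{C:rank1}, which already establishes the conjecture for $G = PGL_2$. The starting point is that the set $\mathfrak{g}^{rs}$ of regular semisimple elements is open and dense in $\mathfrak{g}$ with complement of measure zero, so as $G$-representations $L^2(\mathfrak{g}) \cong L^2(\mathfrak{g}^{rs})$. Since there are only finitely many $G(\k)$-conjugacy classes of maximal tori over the local field $\k$, we have a finite $G$-invariant open disjoint decomposition $\mathfrak{g}^{rs} = \bigsqcup_T \mathfrak{g}^{rs}_T$, where $\mathfrak{g}^{rs}_T$ is the set of regular semisimple $X$ whose centralizer is conjugate to $T$. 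Correspondingly, $L^2(\mathfrak{g})$ splits as the orthogonal direct sum $\bigoplus_T L^2(\mathfrak{g}^{rs}_T)$ of $G$-invariant subspaces.

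For each conjugacy class of maximal tori $T$, the adjoint orbit of any $X \in \mathfrak{t}^{rs}$ is isomorphic to $T \backslash G$ as a homogeneous $G$-space, and $\mathfrak{g}^{rs}_T$ is realized as a fibration over $\mathfrak{t}^{rs}/W(G,T)$ (with $W(G,T) = N_G(T)/T$ the rational Weyl group) with fiber $T \backslash G$. Disintegrating the measure against this fibration yields, as $G$-representations,
\[  L^2(\mathfrak{g}^{rs}_T) \cong \int_{\mathfrak{t}^{rs}/W(G,T)} L^2(T \backslash G) \, d\mu(X), \]
which is a direct integral of constant fibers over a positive-dimensional base and is therefore weakly equivalent to $L^2(T \backslash G)$. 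Summing over conjugacy classes, one concludes that $L^2(\mathfrak{g})$ is weakly equivalent to $\bigoplus_T L^2(T \backslash G)$, exactly as in the proof of Corollary \ref{C:rank1}.

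The conjecture thus reduces to showing that $\bigoplus_T L^2(T \backslash G)$ is weakly equivalent to the regular representation $L^2(G)$, and this is where the main obstacle lies. The easy direction is the absolute continuity of the spectral support of each $L^2(T \backslash G)$ with respect to the Plancherel measure of $G$, since $L^2(T \backslash G)$ sits inside $L^2(G)$ as the closed $G$-invariant subspace of functions invariant under left $T$-translation. The hard direction requires that the Plancherel support of $G$ be covered by the union of the Plancherel supports of the various $L^2(T \backslash G)$; equivalently, every tempered $\pi \in \widehat{G}$ must carry a nonzero distributional $T$-invariant functional (in the $L^2$ Bessel sense) for some maximal torus $T$. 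A natural route is to invoke the Sakellaridis-Venkatesh framework for each spherical variety $T \backslash G$ and argue that the associated dual groups $\check{G}_{T \backslash G}$ collectively exhaust the tempered $L$-parameters of $G$ as $T$ ranges over the conjugacy classes of Cartan subgroups. Alternatively, one may use Harish-Chandra's Plancherel decomposition, writing a tempered $\pi$ as a parabolic induction $\Ind_P^G(\sigma \otimes \nu)$ from a discrete series $\sigma$ of a Levi $M$, and produce the required $T$-period by choosing $T$ to be a suitable maximal torus in $M$ and analyzing the open $T$-orbit on $P \backslash G$ via Mackey theory together with Harish-Chandra's wave packet estimates. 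The chief technical difficulty is to match the distributional periods with the Plancherel measure uniformly across all $T$, ensuring that for generic tempered $\pi$ the resulting period is genuinely nonzero.
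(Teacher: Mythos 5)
The statement you are attempting to prove is labeled as a \emph{Conjecture} in the paper, and the authors do not offer a proof: they only observe that Corollary~\ref{C:rank1} verifies it for $G = PGL_2$, and they explicitly leave even the case $G = PGL_3$ open (remarking that ``if the conjecture holds for $PGL_3$, one can then take $W(\pi)$ to be $\C$''). So there is no paper proof for your argument to be compared against. Your reduction of $L^2(\mathfrak{g})$ to $\bigoplus_T L^2(T\backslash G)$ --- via the open dense stratification of the regular semisimple locus by the finitely many rational conjugacy classes of maximal tori, followed by disintegration over $\mathfrak{t}^{rs}/W(G,T)$ with constant fibre $L^2(T\backslash G)$ --- is correct and is precisely the mechanism the paper itself uses in the $PGL_2$ and $PGL_3$ discussions. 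Likewise, the ``easy direction'' (that each $L^2(T\backslash G)$ is weakly contained in $L^2(G)$) is fine, although the reason is not that $L^2(T\backslash G)$ literally sits inside $L^2(G)$ (it does not, as $T$ is non-compact); rather, one uses that $T$ is abelian, hence amenable, so that $L^2(T\backslash G) = \Ind_T^G \mathbf{1}$ is weakly contained in $\Ind_T^G L^2(T) \cong L^2(G)$.

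However, you have not proved the conjecture. After the reduction, the entire mathematical content is the ``hard direction'': that $\mu_G$-almost every tempered $\pi \in \widehat{G}$ carries a nonzero (distributional, $L^2$-theoretic) $T$-invariant period for some rational maximal torus $T$, with the periods varying measurably enough to exhaust the Plancherel measure. You explicitly flag this as ``the main obstacle'' and then only gesture at two possible strategies --- invoking the Sakellaridis--Venkatesh dual-group picture for each $T\backslash G$, or parabolically inducing from a discrete series of a Levi $M\supset T$ and analyzing the open $T$-orbit on $P\backslash G$ by Mackey theory --- without carrying either one out. The second strategy, in particular, hides serious difficulties: the open $T$-orbit need not be a single orbit, the relevant Bessel-type distribution on a discrete series of $M$ must be shown nonzero for almost all inducing data, and one must control the behavior of wave packets at the boundary. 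Since the authors themselves pose this as an open problem even for $PGL_3$, your proposal cannot be regarded as a proof; it is a correct reduction that terminates exactly at the point where the genuine work of the conjecture begins.
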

\vskip 5pt

Corollary \ref{C:rank1} verifies this conjecture for $PGL_2$.
If the conjecture holds for $PGL_3$, one can then take $W(\pi)$ to be $\C$ for all $\pi$.

\vskip 5pt

\section{\bf Exceptional Theta Correspondence:  $SL_3 \times Aut(\mathcal{R},\det)$}

Finally we come to the family of dual pairs  $SL_3 \times Aut(\mathcal{R},\det) \subset S = S_{\mathcal{R}}$ given by (\ref{E:dualpair1}). What is interesting about this situation is that the group $S$ may have no Siegel parabolic subgroup, so that the argument below is not the analog of that in the classical cases of \S 3.
To simplify notation, we shall set $G =  Aut(\mathcal{R},\det)$. Note that in the case of $F_4$, $S$ is the double cover of $F_4$ and the dual pair is $\tilde{SL}_3 \times G = \tilde{SL}_3 \times SL_3$.  
\vskip 5pt

Let $Q_0 = L_0 \cdot U_0 \subset SL_3$ be the maximal parabolic subgroup stabilizing the subspace $\k e_1 + \k e_2$, so that  
\[  L_0 \cong GL_2 \quad \text{and} \quad
U_0 = u_{\beta_0 - \beta}(\k) \times u_{\beta_0}(\k).\]
  Let $\chi$ be a generic character of $U_0$ trivial on $u_{\beta_0 - \beta}(\k)$.  
The stabilizer in $L_0$ of $\chi$ is a  subgroup of the form $T_0 \ltimes U_{L_0}$  with $T_0 \cong \k^{\times}$ contained in the diagonal torus and $U_{L_0}  = u_{-\beta}(\k)$.  On restricting the minimal representation $\Pi$ to $Q_0 \times G$, we may write
\[  \Pi \cong {\rm Ind}^{Q_0 \times G}_{P_{L_0}U_0  \times G} \Pi_{\chi} \]
for some representation $\Pi_{\chi}$ of $P_{L_0} U_ 0 \times G$ with $U_0$ acting by $\chi$. 
Here, we have used the theorem of Howe-Moore which ensures that the trivial character of $U_0$ does not intervene.
\vskip 5pt

 Now we can describe the $P_{L_0} U_0\times G$-module $\Pi_{\chi}$ using the mixed model of $\Pi$.  Recall that this mixed model of $\Pi$ is realized on $L^2(\k^{\times} \times \mathcal{R} \times \k)$. Moreover, the action of $U_0  = u_{\beta_0 - \beta}(\k) \times u_{\beta_0}(\k)$ in this model is:
 \[  \begin{cases}
 (u_{\beta_0}(z)f )(t, x, a) = \psi(tz) \cdot f(t,x,a) \\
(u_{\beta_0 - \beta}(y) f)(t,x,a) = \psi(ay) \cdot f(t,x,a). \end{cases} \] 
As such,  $\Pi_{\chi}$ is the representation obtained from $\Pi$ by specializing (continuous) functions $f \in \Pi$  to the function $x \mapsto f(1, x, 0)$ of $\mathcal{R}$. Thus 
\[  \Pi_{\chi} = L^2(\mathcal{R})  \]
where the action of $T_0 \times G$ is geometric, with $T_0$ acting by scaling on $\mathcal{R}$. Moreover, it follows by  (\ref{E:weyl}) that the action of $u_{-\beta}(b) \in U_{L_0}$ is:
\[  (u_{-\beta}(b) \cdot f)(x)  = \psi (- b \cdot \det(x)) \cdot f(x). \]
\vskip 5pt

Now the set $\{ x \in \mathcal{R}: \det(x) \ne 0\}$ is open dense and by Proposition \ref{P:jordan}(i), it  is the union of finitely many generic orbits  of $T_0 \times G$ indexed by $\k^{\times}/ (\k^{\times})^3$. For each $a \in \k^{\times}/ (\k^{\times})^3$, let $H_a$ be the corresponding stabilizer group whose type is described in Table 4 in \S 4.2. Then
\[  \Pi \cong \bigoplus_a {\rm Ind}^{Q_0 \times G}_{N_0 \times H_a}   \chi_a \boxtimes \C \cong {\rm Ind}_{N_0}^{Q_0} \chi_a \boxtimes L^2(H_a \backslash G). \]
On the other hand, one has abstractly
\[  \Pi \cong \int_{\widehat{SL_3}} \pi|_{Q_0} \otimes \Theta(\pi) \,  d\nu_{\theta}(\pi). \]
Now we note:
\vskip 5pt

\begin{lemma}
As a representation of $SL_3$, $\Pi$ is weakly equivalent to $L^2(SL_3)$.
\end{lemma}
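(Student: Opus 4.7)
I will proceed by analogy with the proof of the preceding lemma (identifying $L^2(\mathcal{R}_0^\perp)$, and hence $\Pi$, with $L^2(G')$ up to weak equivalence) and with Corollary \ref{C:rank1}. The idea is to use the mixed model $\Pi = L^2(\k^{\times} \times \mathcal{R} \times \k)$ together with the explicit action of $SL_3 \subset S$ to realize $\Pi|_{SL_3}$ as an $L^2$-space of a variety on which $SL_3$ acts with generic stabilizers isogenous to maximal tori; this will then force weak equivalence with $L^2(SL_3)$.

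First, I would record the action of a Borel $B \subset SL_3$ on the mixed model by combining the formulas already established: $U_0 = u_{\beta_0 - \beta}(\k) \times u_{\beta_0}(\k)$ acts by the characters $(t,x,a)\mapsto \psi(tz)$ and $(t,x,a)\mapsto\psi(ay)$, the diagonal torus acts by scaling the $\k^{\times}$- and $\k$-variables together with a suitable character, and $U_{L_0} = u_{-\beta}(\k)$ acts by the twist $\psi(-b\det(x))$ (from the formula derived just before the Lemma, cf.~(\ref{E:weyl})). The full $SL_3$-action is then obtained by adjoining the Weyl element $w_\beta$, whose explicit formula in the mixed model is given in \S 7.2. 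After a Fourier transform in the $t$- and $a$-variables, the characters coming from $U_0$ become ordinary translations, so the combined $B$-action becomes a geometric action on the resulting model $L^2(\k^{\times}\times \mathcal{R}\times \k)$ (possibly twisted by a modulus character that does not affect weak equivalence).

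Next, I would examine the resulting $SL_3$-orbit structure on the underlying space. Using Proposition \ref{P:jordan}(i), the action of the Levi factor $L_0 \cong GL_2$ together with the dual-pair partner $G = \Aut(\mathcal{R},\det)$ on the locus $\{\det(x)\neq 0\} \subset \mathcal{R}$ has finitely many open orbits, with stabilizers described in Table 4. Combined with the action of $w_\beta$ and $u_{-\beta}$, which exchange and twist the $\k^\times$- and $\det$-variables, this should produce a single dense open $SL_3$-orbit (up to a finite, essentially harmless quotient) whose stabilizer is, up to isogeny, a maximal torus $T$ of $SL_3$. Thus modulo measure-zero sets, $\Pi|_{SL_3}$ is weakly equivalent to a direct sum of spaces of the form $L^2(T\backslash SL_3)$ as $T$ runs over conjugacy classes of maximal tori in $SL_3$.

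To finish, I would reproduce the argument of Corollary \ref{C:rank1} for $SL_3$: the union of strongly regular semisimple orbits being dense in $\mathfrak{sl}_3$ identifies $L^2(\mathfrak{sl}_3)$, up to weak equivalence, with $\bigoplus_T L^2(T\backslash SL_3)$, and by the argument of \cite[\S 6]{Sakellaridis-Venkatesh} (invoked already in \S 7.5) each $L^2(T\backslash SL_3)$ is absolutely continuous with respect to the Plancherel measure of $SL_3$ and together they exhaust its support. This yields $\Pi|_{SL_3}$ weakly equivalent to $L^2(SL_3)$, as required. The main obstacle is the final step in the second paragraph: the $SL_3$-action on the mixed model is not purely geometric, since $w_\beta$ acts by a Fourier-type integral transform and $u_{-\beta}$ mixes $\k^{\times}$ with $\det(x)$; carefully disentangling these using Proposition \ref{P:jordan}(i) to pin down that the generic stabilizers really are (isogenous to) maximal tori, rather than something smaller or larger that would spoil the match with the Plancherel spectrum of $SL_3$, is where the actual work lies.
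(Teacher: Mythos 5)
Your proposal diverges from the paper's proof in a way that creates a genuine gap. The paper's argument does \emph{not} try to analyze the non-geometric action of the dual-pair $SL_3 \subset \mathfrak{sl}_3 \oplus \mathfrak{m}_{\mathcal{R}} \oplus \cdots$ on the mixed model $L^2(\k^\times \times \mathcal{R} \times \k)$; instead it first \emph{conjugates} $SL_3$ into the Levi $M_S^1 = \Aut(\mathcal{R},\det)$ of the Heisenberg parabolic, via the chain $SL_3 \subset \Aut(J(\k^2),\det) = SL_3 \times_{\mu_3} SL_3 \subset \Aut(J(B),\det)$. After this conjugation the action in the mixed model becomes \emph{genuinely geometric}: $\Pi|_{SL_3}$ is weakly equivalent to $L^2(J(B)) = L^2(J(\k^2)) \otimes L^2(J(\k^2)^\perp)$, and $J(\k^2) \cong M_3(\k)$ carries the left-multiplication action of $SL_3$, whose generic stabilizer is \emph{trivial}. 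Hence $L^2(J(\k^2))$ is directly a multiple of the regular representation of $SL_3$, and tensoring with anything preserves this; no appeal to $L^2$-spaces of torus quotients is needed.

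Your approach, by contrast, stays with the dual-pair $SL_3$ and attempts to extract a geometric action ``after Fourier transform,'' concluding that the generic stabilizers are (isogenous to) maximal tori and then invoking the argument of Corollary~\ref{C:rank1}. This has two problems. First, the claim that the $SL_3$-action becomes geometric with toral generic stabilizers is unsupported and appears to be wrong: the paper's own computation gives trivial generic stabilizers (through a different embedding), and there is no reason to expect the adjoint-action orbit picture of \S 7.5 (which concerns the \emph{other} dual-pair member, $PGL_3 = G'$ in $PGL_3 \times G_2$) to transfer to this $SL_3$. Second, even granting the reduction to $\bigoplus_T L^2(T\backslash SL_3)$, your final step is precisely the content of Conjecture~7.6 for $SL_3$, which the paper explicitly states as open in the $PGL_3$ case; the argument of \cite[\S 6]{Sakellaridis-Venkatesh} invoked in \S 7.5 gives only one direction (absolute continuity with respect to Plancherel), not exhaustion of the Plancherel support, and the paper is careful to point this out by leaving $W(\pi)$ as a multiplicity space of dimension $\leq 1$ rather than asserting $W(\pi) = \C$. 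So even if your intermediate claim were correct, the proof would not close; the paper's route through $M_3(\k)$ is exactly how one avoids having to solve that conjecture.
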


\begin{proof}
If $S$ is of type $E$, the group $SL_3$ is contained in a conjugate of the Heisenberg parabolic subgroup $P_S$. Indeed, after an appropriate  conjugation, we may assume that 
\[
SL_3 \subset Aut(J(\k^2), \det) = SL_3 \times_{\mu_3} SL_3 \subset \Aut(J(B), \det), \]
where $B = \k^2$, $M_2(\k)$ or the split octonion algebra $\mathbb{O}$ in the respective case.
From the description of the mixed model, one sees that $\Pi$ is nearly equivalent to the representation of $SL_3$ on $L^2(J(B))  = L^2(J(\k^2)) \otimes L^2(J(\k^2)^{\perp})$.  Since $J(\k^2) \cong M_3(\k)$ 
with $SL_3$ acting by left multiplication, we see that $J(\k^2)$ is weakly equivalent to the regular representation of $SL_3$. This implies that
  $\Pi$ is weakly equivalent to   the regular representation of $SL_3$. 
\vskip 5pt

The case when $S = F_4$ is a bit more intricate; we omit the details here.
\end{proof} 

Thus $\nu_{\theta} = \mu_{SL_3}$ and  every $\pi$ in the support of $\nu_{\theta}$ is tempered, so that 
\[  \pi|_{Q_0} = \bigoplus_{a\in \k^{\times}/ (\k^{\times})^3}   Wh_{\chi_a}(\pi) \otimes {\rm Ind}_{N_0}^{Q_0} \chi_a. \]
Comparing, we see that
\[  L^2(H_a \backslash G) \cong  \int_{\widehat{SL_3}} Wh_{\chi_a}(\pi) \otimes \Theta(\pi)  d\mu_{SL_3}(\pi),\]
as desired.
 \vskip 10pt

\end{document}